\newtheorem{thm}{Theorem}[section]
\newtheorem{lem}{Lemma}[section]
\begin{document}
\title{Spectral extremal problem on $t$ copies of $\ell$-cycle\footnote{Supported by the National Natural Science Foundation of China (Nos. 12171066, 12271162), Anhui Provincial Natural Science Foundation (No. 2108085MA13) and Major Natural Science Research Project of Universities in Anhui Province (No. 2022AH040151).}}
\author{{\bf Longfei Fang$^{a,b}$},
{\bf Mingqing Zhai$^b$}\thanks{Corresponding author: mqzhai@chzu.edu.cn
(M. Zhai)}, {\bf Huiqiu Lin$^{a}$} \\
\small $^{a}$ School of Mathematics, East China University of Science and Technology, \\
\small  Shanghai 200237, China\\
\small $^{b}$ School of Mathematics and Finance, Chuzhou University, \\
\small  Chuzhou, Anhui 239012, China\\
}

\date{}
\maketitle
{\flushleft\large\bf Abstract}
Denote by $tC_\ell$ the disjoint union of $t$ cycles of length $\ell$.
Let $ex(n,F)$ and $spex(n,F)$ be the maximum size
and spectral radius over all $n$-vertex $F$-free graphs, respectively.
In this paper, we shall pay attention to the study of both $ex(n,tC_\ell)$ and $spex(n,tC_\ell)$.
On the one hand, we determine $ex(n,tC_{2\ell+1})$
and characterize the extremal graph
for any integers $t,\ell$ and
$n\ge f(t,\ell)$, where $f(t,\ell)=O(t\ell^2)$.
This generalizes the result on $ex(n,tC_3)$ of Erd\H{o}s
[Arch. Math. 13 (1962) 222--227] as well as the research
on $ex(n,C_{2\ell+1})$ of F\"{u}redi and Gunderson
[Combin. Probab. Comput. 24 (2015) 641--645].
On the other hand,
we focus on the spectral Tur\'{a}n-type function $spex(n,tC_{\ell})$,
and determine the extremal graph
for any fixed $t,\ell$ and large enough $n$.
Our results not only extend some classic spectral extremal results on triangles,
quadrilaterals and general odd cycles due to Nikiforov,
but also develop the famous spectral even cycle conjecture
proposed by Nikiforov (2010) and confirmed by Cioab\u{a}, Desai and Tait (2022).

\begin{flushleft}
\textbf{Keywords:} Extremal graph; Spectral radius; Vertex-disjoint cycles
\end{flushleft}
\textbf{AMS Classification:} 05C35; 05C50

\section{Introduction}

Given a graph $F$, a graph is said to be \textit{$F$-free}
if it does not contain a subgraph isomorphic to $F$.
The \emph{Tur\'{a}n number} of $F$, denoted by $ex(n,F)$,
is the maximum number of edges in an $n$-vertex $F$-free graph.
An $F$-free graph is said to be \textit{extremal} with respect
to $ex(n,F)$, if it has $n$ vertices and $ex(n,F)$ edges.
Denote by $T_{n,r}$ the complete $r$-partite graph on $n$ vertices
in which all parts are as equal in size as possible.
An interesting graph in Tur\'{a}n-type problem is perhaps a cycle.
In 2015, F\"{u}redi and Gunderson \cite{Furedi1} determined $ex(n,C_{2\ell+1})$ for all $n$ and $\ell$, and specially, $T_{n,2}$ is the unique extremal graph when $n\geq4\ell$.
However, up to now the exact value of $ex(n,C_{2\ell})$ is still open.
Given a graph $F$, we denote by $tF$ the disjoint union of $t$ copies of $F$.
The study of the Tur\'{a}n number of $tC_\ell$ can be dated back to 1962, Erd\H{o}s \cite{Erdos1} determined $ex(n,tC_3)$ for $n>400(t-1)^2$,
and characterized the unique extremal graph $K_{t-1}+T_{n-t+1,2}$
(that is, the join of $K_{t-1}$ and $T_{n-t+1,2}$).
Subsequently, Moon \cite{Moon} proved that Erd\H{o}s's result is still valid whenever $n>\frac{9t-11}{2}$.
In addition, Erd\H{o}s and P\'{o}sa \cite{Erdos2} also showed that $ex(n,t\mathcal{C})=(2t-1)(n-t)$ for $t\ge 2$ and $n\ge 24t$,
where $t\mathcal{C}$ is the family of $t$ disjoint cycles.
In this paper, we further determine the Tur\'{a}n number $ex(n,tC_{2\ell+1})$
by the following theorem.
It should be noted that if $n$ is sufficiently large, our result is a special
case of Theorem 2.2 due to Simonovits \cite{Simonovits}.

\begin{thm}\label{theorem1.1}
Let $t,\ell,n$ be three integers with $t,\ell\ge 2$ and $n\ge \Big\lfloor\frac{(8t\ell+4\ell+3t-6)^2}{4\lfloor\frac{t}{2}\rfloor}\Big\rfloor
+8t\ell+4t+4\ell-5$.
Then $K_{t-1}+T_{n-t+1,2}$ is the unique extremal graph with respect to $ex(n,tC_{2\ell+1})$.
\end{thm}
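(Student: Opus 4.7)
The plan is to prove Theorem~\ref{theorem1.1} by induction on $t$, with the base case $t=1$ supplied by F\"uredi--Gunderson's theorem on $ex(n,C_{2\ell+1})$ and its uniqueness of $T_{n,2}$. The reverse inclusion, that $K_{t-1}+T_{n-t+1,2}$ is $tC_{2\ell+1}$-free, is immediate: deleting the $t-1$ clique vertices leaves a bipartite graph, so every odd cycle uses at least one clique vertex, and thus at most $t-1$ pairwise vertex-disjoint odd cycles can be packed. Hence I concentrate on the upper bound on edges and the uniqueness of the extremal configuration.

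Let $G$ be an $n$-vertex $tC_{2\ell+1}$-free graph with $e(G)\ge e(K_{t-1}+T_{n-t+1,2})$. A direct computation shows
\[
e(K_{t-1}+T_{n-t+1,2})-e(K_{t-1}+T_{n-t,2})=(t-1)+\lceil (n-t)/2\rceil,
\]
so if $\delta(G)$ is smaller than this quantity, deleting a minimum-degree vertex and inducting on $n$ contradicts extremality; hence I may assume $\delta(G)$ is of order $n/2$. Next I claim that $G$ must contain $t-1$ vertex-disjoint copies of $C_{2\ell+1}$: otherwise $G$ is $(t-1)C_{2\ell+1}$-free, and the inductive hypothesis on $t$ yields
\[
e(G)\le e(K_{t-2}+T_{n-t+2,2})=e(K_{t-1}+T_{n-t+1,2})-\lfloor (n-t+1)/2\rfloor,
\]
a contradiction. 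Fix $t-1$ vertex-disjoint $(2\ell+1)$-cycles $C^1,\ldots,C^{t-1}$, set $V_0=\bigcup V(C^i)$ and $W=V(G)\setminus V_0$, and observe that $G[W]$ is $C_{2\ell+1}$-free, so by F\"uredi--Gunderson it has at most $\lfloor |W|^2/4\rfloor$ edges.

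The core step is to select $u_i\in V(C^i)$ so that $S=\{u_1,\ldots,u_{t-1}\}$ is a transversal for every copy of $C_{2\ell+1}$ in $G$, that is, $G-S$ is $C_{2\ell+1}$-free. Granted this, the chain of inequalities
\[
e(G)=e(G-S)+e_G(S,V\setminus S)+e(G[S])\le \lfloor(n-t+1)^2/4\rfloor+(t-1)(n-t+1)+\binom{t-1}{2}
\]
combined with $e(G)\ge e(K_{t-1}+T_{n-t+1,2})$ forces equality in every term; F\"uredi--Gunderson's uniqueness then pins $G-S=T_{n-t+1,2}$, while the remaining equalities force $S$ to be a clique completely joined to $V\setminus S$, so $G=K_{t-1}+T_{n-t+1,2}$ as required.

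The main obstacle is the existence of such a transversal of size $t-1$: odd cycles admit no matching/covering duality, and no off-the-shelf Erd\H{o}s--P\'osa-type result applies. My plan is to argue by contradiction: suppose that for every selection $u_i\in V(C^i)$ a rogue copy $C^*$ of $C_{2\ell+1}$ persists in $G-S$. Using the large minimum degree and the abundance of unused vertices in $W$, I will reroute each $C^i$ through a suitable $(2\ell)$-path in $W\setminus V(C^*)$, producing $t$ pairwise vertex-disjoint $(2\ell+1)$-cycles and contradicting $tC_{2\ell+1}$-freeness. Building $t$ independent reroutings of length $\Theta(\ell)$ inside $W$, each avoiding the previously used vertices and $V(C^*)$, is precisely what the quantitative bound $n\ge\Theta(t\ell^2)$ in the theorem is calibrated to guarantee.
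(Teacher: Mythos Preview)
Your central step---the existence of a transversal $S=\{u_1,\ldots,u_{t-1}\}$ with $u_i\in V(C^i)$ such that $G-S$ is $C_{2\ell+1}$-free---is where the plan breaks down. The rerouting you describe asks for a path of length $2\ell-1$ inside $G[W]$ (minus a small forbidden set) between two $W$-neighbours of the anchor $u_i$, so that closing through $u_i$ gives a $(2\ell+1)$-cycle. But $G[W]$ is $C_{2\ell+1}$-free with nearly $|W|^2/4$ edges, hence close to a balanced complete bipartite graph with parts $A,B$; if $N(u_i)\cap W$ lies entirely in one part (nothing in your setup prevents this), then every path in $G[W]$ between two such neighbours has \emph{even} length, and no rerouting through $W$ alone yields an odd cycle at $u_i$. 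Patching this by also using an edge $u_iu_i'$ of $C^i$ requires $u_i'\notin V(C^*)$, which you have not secured. In short, high minimum degree lets you control path \emph{lengths} in $W$, not path \emph{parities}, and the odd-cycle Erd\H{o}s--P\'osa statement you need does not follow from what you have written. (Separately, your induction on $n$ for the minimum-degree reduction has no base case at the threshold; this is repairable by an iterative deletion with a growing edge-surplus, but it should be said.)

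The paper avoids the transversal question entirely by a different reduction. Since $K_{t-1}+T_{n-t+1,2}$ is also the Erd\H{o}s--Moon extremal graph for $tC_3$, any extremal $G$ has $e(G)\ge ex(n,tC_3)$. After iteratively deleting low-degree vertices to reach $\delta\ge\lfloor n'/2\rfloor$ (on a possibly smaller graph, with a surplus that forces termination), Erd\H{o}s--Moon uniqueness gives either the target graph or $t$ vertex-disjoint \emph{triangles}. A short lemma then grows each triangle into a $(2\ell+1)$-cycle by extending three disjoint paths from its vertices and pigeonholing a common neighbour at their tips. Because the seed is already an odd cycle, the high minimum degree only has to supply length, never parity---precisely the obstacle your rerouting cannot negotiate.
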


Let $A(G)$ be the adjacency matrix of a graph $G$,
and $\rho(G)$ be its spectral radius, that is,
the maximum modulus of eigenvalues of $A(G)$.
The spectral extremal value of a given graph $F$, denoted by $spex(n,F)$,
is the maximum spectral radius over all $n$-vertex $F$-free graphs.
An $F$-free graph on $n$ vertices with maximum spectral radius is called an \textit{extremal graph} with respect to $spex(n,F)$.
Note that $\rho(G)\geq\frac {2m}n$ for each graph $G$ with $n$ vertices and $m$ edges.
Thus we always have
$ex(n,F)\leq \frac n2spex(n,F).$

In recent years, the investigation on $spex(n,F)$ has become very popular (see \cite{Cioaba2,Cioaba,Desai,Li,LIY,LIN1,LIN2,NIKI,TAIT1,TAIT2,WANG,ZHAI} ).
In this paper, we are interested in studying $spex(n,tF)$ for some given $F$.
Up to now,  $spex(n,tF)$ and its corresponding extremal graphs were studied
for some special cases
(see $spex(n,tK_2)$ \cite{Feng}, $spex(n,tP_\ell)$ \cite{Chen-M}, $spex(n,tS_\ell)$ \cite{Chen-M2}, $spex(n,tK_\ell)$ \cite{Ni}).
In this paper, we consider that $F$ is a cycle of given length.

We first investigate the case that $F$ is an odd cycle.
Note that Nikiforov \cite{Nikiforov2} determined $spex(n,C_{2\ell+1})$
for sufficiently large $n$.
Using Theorem \ref{theorem1.1} and Nikiforov's result on $spex(n,C_{2\ell+1})$,
we prove the following theorem.

\begin{thm}\label{theorem1.3}
For any two given positive integers $t,\ell$ and sufficiently large $n$,
$K_{t-1}+T_{n-t+1,2}$ is the unique extremal graph  with respect to $spex(n,tC_{2\ell+1})$.
\end{thm}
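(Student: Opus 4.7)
The plan is to let $G$ be an extremal $n$-vertex $tC_{2\ell+1}$-free graph with Perron eigenvector $\mathbf{x}$ normalized by $\max_v x_v=1$, and, following the template of Nikiforov and of Cioab\u{a}--Desai--Tait, to isolate a ``core'' $L$ of apex-like vertices whose removal leaves a $C_{2\ell+1}$-free graph. Once I pin down $|L|=t-1$, $G[L]\cong K_{t-1}$, $L$ joined to $V(G)\setminus L$, and $G-L\cong T_{n-t+1,2}$, the conclusion $G\cong K_{t-1}+T_{n-t+1,2}$ is immediate.

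First I would observe that $K_{t-1}+T_{n-t+1,2}$ is itself $tC_{2\ell+1}$-free, since every odd cycle in it must meet $V(K_{t-1})$ and $|V(K_{t-1})|=t-1$. A routine quotient-matrix computation then yields the lower bound $\rho(G)\ge \rho(K_{t-1}+T_{n-t+1,2})=\tfrac{n}{2}+\Theta(t)$. Combined with $\rho(G)\le \sqrt{2e(G)-n+1}$ and the Tur\'an upper bound $e(G)\le ex(n,tC_{2\ell+1})=e(K_{t-1}+T_{n-t+1,2})$ supplied by Theorem \ref{theorem1.1}, this forces $e(G)=(1-o(1))n^2/4$ and $\rho(G)=(1+o(1))n/2$, placing $G$ deep inside the almost-extremal regime.

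Next I would set $L=\{v:\,x_v\ge \eta\}$ for a small constant $\eta=\eta(t,\ell)>0$. A double-counting on $\rho^2 x_{u^*}=\sum_{v\sim u^*}\rho x_v$ (where $u^*$ attains $x_{u^*}=1$) together with the edge bound shows $|L|$ is bounded by a constant depending only on $t$ and $\ell$, and every $v\in L$ satisfies $d(v)\ge n-O(1)$; this uses the now-standard local-surgery trick that if some $v\in L$ were missing too many edges, rerouting its non-neighbors to neighbors of a maximum-entry vertex would strictly increase $\rho$ while preserving $tC_{2\ell+1}$-freeness. The key claim is then that $G-L$ is $C_{2\ell+1}$-free: if not, the almost-complete neighborhood of each $v\in L$ combined with the dense bipartite-like structure of $G-L$ lets me greedily build $t-1$ additional vertex-disjoint copies of $C_{2\ell+1}$, each passing through a distinct vertex of $L$, producing a copy of $tC_{2\ell+1}$ in $G$, a contradiction. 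Nikiforov's theorem then yields $\rho(G-L)\le \rho(T_{n-|L|,2})$, with equality characterizing $G-L\cong T_{n-|L|,2}$.

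Finally, matching the lower bound $\rho(G)\ge \rho(K_{t-1}+T_{n-t+1,2})$ against the upper bound obtained from $G-L\cong T_{n-|L|,2}$ and a Perron-vector analysis of the edges between $L$ and $V(G)\setminus L$ forces $|L|=t-1$, $G[L]=K_{t-1}$, and $L$ completely joined to $V(G)\setminus L$, giving $G\cong K_{t-1}+T_{n-t+1,2}$. The main obstacle is the greedy-extension step inside the $C_{2\ell+1}$-freeness claim: one must verify that, given any copy of $C_{2\ell+1}$ in $G-L$ and any choice of apex-vertices in $L$, the common neighborhoods inside $V(G)\setminus(L\cup V(C))$ still support the requisite disjoint odd cycles. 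This demands fine control over the bipartite-like structure of $G-L$ and over the edges between $L$ and $V(G)\setminus L$, both of which flow from Theorem \ref{theorem1.1} and the edge count from Step~1; once it is established, the remainder reduces to routine invocations of Nikiforov's theorem.
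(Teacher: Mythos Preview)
Your proposal has a structural gap at the very first step: the eigenvector-threshold set $L=\{v:x_v\ge\eta\}$ cannot play the role you assign it. That filtering works in the Cioab\u{a}--Desai--Tait template because the even-cycle extremal graph $S_{n,\ell}^+$ is sparse and its non-apex entries are $O(n^{-1/2})$. Here the target graph $K_{t-1}+T_{n-t+1,2}$ is dense, with $\rho\sim n/2$, and \emph{every} vertex has eigenvector entry bounded away from zero; indeed the paper proves $x_v>\tfrac{2}{5}x_{u^*}$ for all $v$ (Lemma~\ref{lemma4.11}). Thus for small $\eta$ your set $L$ is all of $V(G)$, and no double-counting on $\rho^2 x_{u^*}$ will produce $|L|=O_{t,\ell}(1)$. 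The subsequent claims---that $d(v)\ge n-O(1)$ on $L$ and that $G-L$ is $C_{2\ell+1}$-free---then have no footing. A secondary gap: the inequality $\rho\le\sqrt{2e(G)-n+1}$ together with $\rho\ge n/2$ only yields $e(G)\ge n^2/8+O(n)$, not the $(1-o(1))n^2/4$ you claim.

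The paper's route is genuinely different. It first applies the spectral stability lemma of Desai et al.\ (Lemma~\ref{lemma-2.3}) to obtain a near-balanced bipartition $V_1\cup V_2$ with $e(V_1)+e(V_2)=o(n^2)$, and then defines the apex candidates \emph{combinatorially}, as $W=\{v\in V_i:d_{V_i}(v)\ge 2\eta n\}$---vertices with many neighbours inside their own part. It shows (Lemmas~\ref{lemma4.5}--\ref{lemma4.7}) that each independent edge inside $V_i\setminus(U\cup W)$ seeds a $(2\ell{+}1)$-cycle disjoint from the previously built ones, forcing the intra-part matching number to be at most $t-1$; edge-switching arguments then give $U=\varnothing$, $|W|=t-1$, every $w\in W$ dominating, and $G-W$ bipartite, after which a final balancing step yields $G-W\cong T_{n-t+1,2}$. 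Your greedy-extension idea is morally the content of Lemmas~\ref{lemma4.5}--\ref{lemma4.6}, but it sits on top of the bipartition scaffolding; in this dense regime the apex set must be located through the partition, not through the Perron coordinates.
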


Next, we focus on the case that $F$ is an even cycle.
When $t=1$, it can be reduced to a classic spectral Tur\'{a}n-type problem,
that is, $spex(n,C_{2\ell})$,
which was initially investigated by Nikiforov
\cite {Nikiforov5, Nikiforov1}.
Denote by $S_{n,\ell}$ the join of an $\ell$-clique
with an independent set of size $n-\ell$.
Furthermore, let $S_{n,\ell}^+$ be the graph obtained from $S_{n,\ell}$ by adding
an edge within its independent set,
and $S_{n,\ell}^{++}$ be the graph obtained from $S_{n,\ell}$
by embedding a maximum matching within its independent set.
Nikiforov \cite {Nikiforov5} and Zhai et al. \cite{Zhai-3} determined $spex(n,C_4)$ as well as its unique extremal graph $S_{n,1}^{++}$ for odd and even $n$ respectively.
In 2010,  Nikiforov \cite{Nikiforov1} posed the \textit{spectral even cycle conjecture}
as follows:
$S_{n,\ell-1}^{+}$ is the unique extremal graph with respect to
$spex(n,C_{2\ell})$ for $\ell\geq3$ and $n$ large enough.
This conjecture was completely resolved by Cioab\u{a}, Desai and Tait \cite{Cioaba1} in 2022. In this paper, we develop the conjecture and obtain the following result.

\begin{thm}\label{theorem1.5}
Let $t,\ell$ be given positive integers and $n$ be sufficiently large. Then\\
(i) $S_{n,2t-1}^{++}$ is the unique extremal graph with respect to $spex(n,tC_4)$;\\
(ii) $S_{n,\ell t-1}^+$ is the unique extremal graph with respect to $spex(n,tC_{2\ell})$
for $\ell\ge 3$.
\end{thm}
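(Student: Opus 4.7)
The plan is to handle both parts of Theorem~\ref{theorem1.5} along the same general route, with minor adjustments separating $\ell=2$ from $\ell\ge 3$. For the lower bound, I first verify that the proposed extremal graphs are $tC_{2\ell}$-free: a short arc-counting argument on the cyclic arrangement of independent-set vertices inside a hypothetical $C_{2\ell}$ shows that any such cycle in $S_{n,\ell t-1}^{+}$ or $S_{n,2t-1}^{++}$ uses at least $\ell$ vertices of the apex clique (the ``zero-arcs'' of the cycle must coincide with $I$-edges, and at most one such exists for $\ell\ge 3$, whereas any matching is constrained by its independence); since $t$ disjoint copies would consume $\ell t$ apex vertices but only $\ell t-1$ are available, no $tC_{2\ell}$ exists. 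Because $\rho(S_{n,\ell t-1}^{+})=\Theta(\sqrt{n})$, this provides the desired lower bound on $spex(n,tC_{2\ell})$.

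For the upper bound, let $G^{*}$ be an extremal graph and $\mathbf{x}$ its Perron vector, normalized so that $\max_v x_v=1$. Combining the bound $\rho(G^{*})\ge\sqrt{(\ell t-1)(n-\ell t+1)}$ with the eigenvalue equation $\rho(G^{*})\mathbf{x}=A(G^{*})\mathbf{x}$ and Cauchy--Schwarz estimates on $\sum_v x_v^2$, I would extract a ``core'' set $W\subseteq V(G^{*})$ of exactly $\ell t-1$ vertices whose $\mathbf{x}$-weights are bounded below by a positive constant and whose degrees satisfy $d(v)=n-o(n)$; in particular, $W$ is nearly a clique and is nearly fully joined to $V(G^{*})\setminus W$. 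The crucial structural claim is then that $G^{*}-W$ is $C_{2\ell}$-free for $\ell\ge 3$ and contains no $C_{4}$ beyond a matching for $\ell=2$: otherwise, using that each vertex of $W$ is joined to all but $o(n)$ outside vertices, one could greedily thread $t-1$ additional vertex-disjoint copies of $C_{2\ell}$ through $W$ and its large common neighbourhood, producing a forbidden $tC_{2\ell}$. The spectral even-cycle theorems of Cioab\u{a}--Desai--Tait (for $\ell\ge 3$) and Nikiforov/Zhai (for $\ell=2$), combined with a weighted Rayleigh-quotient comparison against $S_{n,\ell t-1}^{+}$, then force $V(G^{*})\setminus W$ to induce exactly one edge (respectively a maximum matching) together with isolated vertices.

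To finish, standard Kelmans-type edge-shifting and Perron-weight comparisons recover the exact structure: any missing edge inside $W$ or between $W$ and $V(G^{*})\setminus W$ can be added without creating $tC_{2\ell}$ while strictly increasing $\rho$, contradicting extremality, and any edge in $V(G^{*})\setminus W$ beyond the prescribed pattern can be shifted out to $W$ with a strict Perron gain. I expect the principal obstacle to be the bridging step: translating the single-cycle extremal theorem to the $t$-disjoint setting demands tight control on the interactions between $W$ and $V(G^{*})\setminus W$, so that the $t-1$ copies of $C_{2\ell}$ threaded through $W$ can genuinely be chosen pairwise vertex-disjoint; for $\ell\ge 3$ one must also carry out a careful comparison of $\rho(S_{n,\ell t-1}^{+})$ with $\rho(S_{n,\ell t-1}^{++})$ and other near-extremal candidates in order to rule them out, the single extra edge winning because additional matching edges would each spawn a fresh $C_{2\ell}$ using only two apex vertices.
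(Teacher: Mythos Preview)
Your outline correctly identifies the target structure (a $(\lambda{=}\ell t-1)$-vertex ``core'' $W$ dominating the rest, with $G-W$ nearly edgeless), and the endgame via edge-shifting is fine. But the bridging step you flag as the ``principal obstacle'' is in fact a genuine gap, and your one-line appeal to Cauchy--Schwarz does not close it. The difficulty is this: for a $C_{2\ell}$-free graph one has $e(N(u))=O(d(u))$ by Erd\H{o}s--Gallai, and this is what drives the usual extraction of large-weight/large-degree vertices from $\rho^2 x_u=\sum_{v\sim u}\sum_{w\sim v}x_w$. For $tC_{2\ell}$-free graphs with $t\ge 2$ there is \emph{no} such bound on $e(N(u))$, so the Perron-weight inequalities you would need simply do not follow from the eigenvalue equation plus Cauchy--Schwarz alone.

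The paper circumvents this by induction on $t$ (with the $t=1$ case supplied by Cioab\u{a}--Desai--Tait and Nikiforov/Zhai, used as the base rather than applied to $G-W$ at the end). The inductive content is Lemma~\ref{lem5.3}: for \emph{every} vertex $u$, the graph $G-\{u\}$ already contains $(t-1)C_{2\ell}$. The proof is short but clever: if $G-\{u\}$ were $(t-1)C_{2\ell}$-free, then $u$ must be dominating (else adding a missing edge keeps $G$ $tC_{2\ell}$-free and raises $\rho$), so $G\cong K_1+(G-\{u\})$, and a Rayleigh-quotient computation (Lemma~\ref{lem5.2}) together with the inductive bound $\rho(G-\{u\})\le spex(n-1,(t-1)C_{2\ell})$ forces $\rho(G)$ below $\rho(S_{n,\lambda}^+)$, contradicting extremality. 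Once Lemma~\ref{lem5.3} is available, deleting the $2\ell(t-1)$ vertices of those cycles yields a $C_{2\ell}$-free subgraph, and \emph{now} Erd\H{o}s--Gallai gives $e(N_1(u))\le(2\lambda-\tfrac32)|N_1(u)|$ and $e(N_1(u),N_2(u))\le(2\lambda-\tfrac12)(|N_1(u)|+|N_2(u)|)$; these inequalities are what make the entire chain of Lemmas~\ref{lem5.4}--\ref{lem5.10} (culminating in $|W''|=\lambda$) go through. You should also be aware that the case $\ell=2$ requires a separate argument (Lemma~\ref{lem5.6}) to force $\Delta(G)\ge(1-o(1))n$, since the edge bound $e(G)\le\ell n^{1+1/\ell}$ is too weak there to bound the set of large-weight vertices directly.
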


By the famous Erd\H{o}s-Gallai theorem,
every $C_{2\ell}$-free graph has a property that
there are at most $\ell n$ edges within the neighborhood of each vertex.
However, this does not hold for $tC_{2\ell}$-free graphs provided that $t\geq2$.
Hence, spectral extremal problem on $tC_{2\ell}$-free graphs is different from
that on $C_{2\ell}$-free graphs.
To this end, we use induction to obtain a very important structural
proposition on the spectral extremal graph $G$, precisely,
$G-\{u\}$ contains $t-1$ disjoint $2\ell$-cycles for each vertex $u\in V(G)$.
This presents a key approach to prove Theorem \ref{theorem1.5}.

The remainder of this paper is organized as follows. In Section \ref{section2},
some preliminary results and lemmas are presented.
In Section \ref{section3}, we use Erd\H{o}s-Moon theorem on $ex(n,tC_3)$ and
more structural analysis to prove Theorem \ref{theorem1.1}, that is, obtain the exact Tur\'{a}n number of $tC_{2\ell+1}$ for $n\geq f(t,\ell)$.
In Section \ref{section4}, we use Theorem \ref{theorem1.1}
and a stability method to show Theorem \ref{theorem1.3}.
In Section \ref{section5},
we give the proof of Theorem \ref{theorem1.5} by using induction and the spectral even cycle results in \cite{Cioaba1,Nikiforov5,Zhai-3}.
We try our best in dealing with the induction procedure,
which may be helpful for other spectral Tur\'{a}n-type problems.

\section{Preliminaries}\label{section2}

Given a simple graph $G$,
we use $V(G)$ to denote
the vertex set, $E(G)$ the edge set, $|G|$ the number of vertices, $e(G)$ the number of edges,
$\nu(G)$ the matching number, $\Delta(G)$ the maximum degree, $\delta(G)$ the minimum degree,  respectively.
For a vertex $v\in V(G)$,
we denote by $N_G(v)$ its neighborhood
and $d_G(v)$ its degree in $G$.
Given two disjoint vertex subsets $S$ and $T$.
Let $G[S]$ be the subgraph induced by $S$, $G-S$
be the subgraph induced by $V(G)\setminus S$,
and $G[S,T]$ be the bipartite subgraph on the vertex set $S\cup T$
which consists of all edges with one
endpoint in $S$ and the other in $T$.
For short, we write $e(S)=e(G[S])$ and $e(S,T)=e(G[S,T])$.
Let $K_{n_1,\dots,n_r}$ be the complete $r$-partite graph with classes of sizes $n_1,\dots,n_r$.
If $\sum_{i=1}^{r}n_i=n$ and $|n_i-n_j|\le 1$ for any two integers $i,j\in \{1,\dots,r\}$,
then $K_{n_1,\dots,n_r}$ is exactly the
$n$-vertex $r$-partite Tur\'{a}n graph $T_{n,r}$.
Let $F+H$ be the join and $F\cup H$ be the union, of $F$ and $H$, respectively.
Particularly, we denote by $tF$ the disjoint union of $t$ copies of $F$.

In this section, we introduce some lemmas which will be used in
the proofs of Theorems \ref{theorem1.1}, \ref{theorem1.3} and \ref{theorem1.5}.
The first one is due to Erd\H{o}s \cite{Erdos1} and Moon \cite{Moon}.

\begin{lem}\label{lemma-2.1} \emph{(\cite{Erdos1,Moon})}
Let $t,n$ be two positive integers with $n\ge \lfloor\frac{19t-9}{2}\rfloor$.
Then $$ex(n,tC_3)=\binom{t-1}{2}+(t-1)(n-t+1)+\Big\lfloor\frac{(n-t+1)^2}{4}\Big\rfloor.$$
Furthermore, $K_{t-1}+T_{n-t+1,2}$ is the unique extremal graph with respect to $ex(n,tC_3)$.
\end{lem}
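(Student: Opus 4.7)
The plan is to prove the statement by induction on $t$, with Mantel's theorem supplying the base case $t=1$ (which gives $ex(n,C_3)=\lfloor n^2/4\rfloor$ and $T_{n,2}$ as the unique extremal graph). In the inductive step the lower bound is immediate: every triangle in $K_{t-1}+T_{n-t+1,2}$ must use at least one vertex of the $(t-1)$-clique, since $T_{n-t+1,2}$ is bipartite and hence triangle-free, so at most $t-1$ vertex-disjoint triangles can be formed there; a direct calculation confirms the claimed edge count $\binom{t-1}{2}+(t-1)(n-t+1)+\lfloor(n-t+1)^2/4\rfloor$.

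For the upper bound, let $G$ be an $n$-vertex $tC_3$-free graph attaining $ex(n,tC_3)$ edges. The crux is to show that $G$ contains a universal vertex, i.e., a vertex $v$ with $d(v)=n-1$. Granting this, the induction closes cleanly: I would first argue that $G-v$ must be $(t-1)C_3$-free, for if $G-v$ contained $t-1$ vertex-disjoint triangles with vertex set $W$, then since $v$ is adjacent to every other vertex, any edge of $G-v-W$ would together with $v$ form a $t$-th vertex-disjoint triangle in $G$, contradicting $tC_3$-freeness; hence $G-v-W$ would be edgeless, forcing $e(G-v)\le\binom{3t-3}{2}+(3t-3)(n-3t+2)$, which for $n\ge\lfloor(19t-9)/2\rfloor$ is strictly less than $e(G)-(n-1)$, a contradiction. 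Once $G-v$ is known to be $(t-1)C_3$-free, the inductive hypothesis gives $e(G-v)\le ex(n-1,(t-1)C_3)$, with equality iff $G-v\cong K_{t-2}+T_{n-t+1,2}$. Combined with $d(v)=n-1$ and the arithmetic identity $ex(n,tC_3)=(n-1)+ex(n-1,(t-1)C_3)$ (a direct check using $\binom{t-1}{2}-\binom{t-2}{2}=t-2$), this yields $e(G)\le ex(n,tC_3)$, with equality forcing $G\cong K_{t-1}+T_{n-t+1,2}$ and thereby giving both the bound and uniqueness.

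The main obstacle is establishing the existence of a universal vertex. I would pick a vertex $v_0$ of maximum degree and assume for contradiction that $d(v_0)\le n-2$, so that some $u$ is non-adjacent to $v_0$. The strategy is a Zykov-type symmetrization: replace $u$ with a clone of $v_0$ (that is, set $N(u):=N(v_0)$ while keeping $u\not\sim v_0$) to produce a graph $G^{\ast}$ with at least as many edges as $G$, and show that either $G^{\ast}$ is again $tC_3$-free, contradicting the maximality of $e(G)$ through an iteration or a degree-strictness argument, or the very possibility of the cloning exposes $t$ vertex-disjoint triangles inside $G$ itself. The subtle point is that cloning can a priori create new triangles through $u$ (precisely when $v_0$ lies on a triangle), so the verification requires careful analysis of a maximum vertex-disjoint triangle system of $G$ and how it interacts with $N(v_0)\setminus N(u)$. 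It is here that the quantitative hypothesis $n\ge\lfloor(19t-9)/2\rfloor$ enters, providing, via the lower-bound density $e(G)\ge e(K_{t-1}+T_{n-t+1,2})$, enough room for the edge-counting estimates that force the contradiction and recover Moon's sharpening of Erd\H{o}s's original threshold $n>400(t-1)^2$.
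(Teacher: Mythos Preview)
The paper does not prove this lemma: it is quoted as a known result of Erd\H{o}s and Moon and used as a black box throughout. There is consequently no ``paper's own proof'' to compare your proposal against.

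On its own merits, your induction scheme is sound: the identity $ex(n,tC_3)=(n-1)+ex(n-1,(t-1)C_3)$ checks, and the argument that $G-v$ must be $(t-1)C_3$-free once $v$ is universal is correct (the edge-count contradiction $e(G-v)\le\binom{3t-3}{2}+(3t-3)(n-3t+2)<e(G)-(n-1)$ does hold in the stated range). The genuine gap is in the Zykov step. Your claim that ``the very possibility of the cloning exposes $t$ vertex-disjoint triangles inside $G$ itself'' breaks down when, in the cloned graph $G^\ast$, a system of $t$ disjoint triangles uses \emph{both} $u$ and $v_0$: say $\{u,a,b\}$ and $\{v_0,c,d\}$ together with $t-2$ others. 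Back in $G$ the triangle $\{v_0,a,b\}$ exists, but it shares $v_0$ with $\{v_0,c,d\}$, so you recover only $t-1$ disjoint triangles, not $t$. Since $u$ and $v_0$ are twins in $G^\ast$, any swap you make still uses both, and there is no reason a replacement triangle avoiding $u,v_0$ exists. So the cloning does not obviously preserve $tC_3$-freeness in the way it preserves $K_r$-freeness. Even if you repair this and reach a complete multipartite extremal graph, you still owe the optimisation showing the smallest part has size exactly one (this is where the threshold $n\ge\lfloor(19t-9)/2\rfloor$ is actually consumed), and Zykov by itself yields only that \emph{some} extremal graph is multipartite, not uniqueness. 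These are the places where Moon's argument does real work beyond what you have sketched.
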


Given two integers $\nu$ and $\Delta$,
define $f(\nu,\Delta)=\max\{e(G)~|~\nu(G)\le \nu, \Delta(G)\le \Delta\}$.
In 1976, Chv\'{a}tal and Hanson \cite{Chvatal} obtained the following result.

\begin{lem} \label{lemma-2.2}\emph{(\cite{Chvatal})}
For every two integers $\nu\ge 1$ and $\Delta\ge 1$, we have
$$f(\nu,\Delta)=\Delta\nu+\Big\lfloor\frac{\Delta}{2}\Big\rfloor
\Big\lfloor\frac{\nu}{\lceil\frac{\Delta}{2}\rceil}\Big\rfloor\le \nu(\Delta+1).$$
\end{lem}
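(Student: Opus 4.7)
The claim factors as the explicit formula $f(\nu,\Delta)=\Delta\nu+\lfloor\Delta/2\rfloor\lfloor\nu/\lceil\Delta/2\rceil\rfloor$ together with the trailing estimate $\le \nu(\Delta+1)$. The trailing estimate is an arithmetic triviality: since $\lfloor\Delta/2\rfloor\le \lceil\Delta/2\rceil$, one has $\lfloor\Delta/2\rfloor\lfloor\nu/\lceil\Delta/2\rceil\rfloor\le \lfloor\Delta/2\rfloor\cdot\nu/\lceil\Delta/2\rceil\le \nu$, whence $f(\nu,\Delta)\le \Delta\nu+\nu=\nu(\Delta+1)$, and I would dispatch this at the very end.

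For the lower bound in the formula, I would give an explicit extremal construction. The natural building blocks are: when $\Delta=2m$ is even, copies of $K_{\Delta+1}$, each with matching number $m=\lceil\Delta/2\rceil$ and $\binom{\Delta+1}{2}$ edges; when $\Delta=2m+1$ is odd, a denser graph on $2m+3$ vertices obtained by removing an $m$-matching from $K_{2m+2}$ and attaching a new vertex to the $2m$ uncovered endpoints, which has maximum degree $\Delta$, matching number $m+1=\lceil\Delta/2\rceil$, and $2m^2+4m+1$ edges. Writing $\nu=q\lceil\Delta/2\rceil+r$ with $0\le r<\lceil\Delta/2\rceil$, I would take $q$ disjoint copies of the appropriate block plus a small component absorbing the residual matching budget $r$, and then verify by direct count that the total edge number matches the claimed formula.

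For the upper bound, I would proceed by strong induction on $\nu$, with trivial base $\nu=0$. Given an extremal $G$ with $\nu(G)=k\le\nu$ and $\Delta(G)\le\Delta$, fix a maximum matching $M$ of size $k$. By Berge's theorem $B:=V(G)\setminus V(M)$ is independent, so every edge meets $V(M)$ and handshaking yields $e(G)=\sum_{v\in V(M)}d_G(v)-e(V(M))\le 2k\Delta-e(V(M))$. The delicate step is to produce, inside $G$, a copy of one of the building blocks described above; augmenting-path obstructions force enough density around $M$ that such a block can be located (for instance, the two endpoints of any matching edge cannot both have neighbors in $B$ without sharing the same neighbor, which forces edges to concentrate inside $V(M)$). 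Excising that block drops $\nu$ by exactly $\lceil\Delta/2\rceil$ and the edge count by the corresponding block size, and the induction hypothesis closes the recursion to the claimed value.

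The main obstacle is precisely this structural extraction. Naive degree/matching inequalities give only the weaker bound $\nu(\Delta+1)$, so one must perform parity-sensitive case analysis—governed by $\Delta\bmod 2$ and $\nu\bmod\lceil\Delta/2\rceil$—to show that every extremal $G$ contains one of the prescribed dense blocks. This is where the floor and ceiling terms in the formula naturally enter, and it is the only step that demands genuine work; everything else is arithmetic or an immediate consequence of Berge's theorem.
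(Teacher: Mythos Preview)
The paper does not prove this lemma at all: it is quoted verbatim from Chv\'atal and Hanson, with no argument supplied. Moreover, the only place the lemma is invoked (the proof of Lemma~\ref{lemma4.12}) uses nothing but the crude trailing bound $f(\nu,\Delta)\le\nu(\Delta+1)$, so from the paper's standpoint the exact formula is irrelevant and your one-line arithmetic derivation of the inequality from the formula already matches the level of detail the paper needs.

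Your outline for a full proof of the Chv\'atal--Hanson formula is reasonable and goes in the right direction, but note one verbal slip in the odd-$\Delta$ block: removing an $m$-matching from $K_{2m+2}$ leaves $2$ \emph{unmatched} vertices, not $2m$; you mean to attach the new vertex to the $2m$ \emph{matched} endpoints (those are the ones whose degree dropped to $2m$ and can absorb the extra incidence). With that correction your edge count $2m^2+4m+1$ and matching number $m+1$ are right. The genuine content, as you say, is the structural extraction in the upper bound, and that is precisely the substance of the original Chv\'atal--Hanson paper; the present paper simply outsources it.
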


The following stability theorem was given by Nikiforov \cite{Nikiforov4}.

\begin{thm}\label{thm0}\emph{(\cite{Nikiforov4})}
Let $r\ge 2$, $\frac{1}{\ln n}<c<r^{-8(r+21)(r+1)}$, $0<\varepsilon<2^{-36}r^{-24}$ and $G$ be an $n$-vertex graph.
If $\rho(G)>(1-\frac1r-\varepsilon)n$, then one of the following holds:\\
(i) $G$ contains a $K_{r+1}(\lfloor c\ln n\rfloor, \dots,\lfloor c\ln n\rfloor,\lceil n^{1-\sqrt{c}}\rceil)$;\\
(ii) $G$ differs from $T_{n,r}$ in fewer than $(\varepsilon^{\frac{1}{4}}+c^{\frac{1}{8r+8}})n^2$ edges.
\end{thm}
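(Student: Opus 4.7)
The plan is to argue by dichotomy: either $G$ is close in edit distance to $T_{n,r}$ (yielding (ii)) or $G$ embeds the large multipartite graph of (i). The hypothesis $\rho(G) > (1 - \frac{1}{r} - \varepsilon)n$ sits just below the spectral Tur\'an threshold $\rho(T_{n,r}) = (1 - \frac{1}{r} + o(1))n$, so $G$ is either forced to spectrally mimic the Tur\'an graph or must exhibit strong $K_{r+1}$-supersaturation, and the whole argument amounts to making this alternative quantitative.

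The main engine would be an iterated Kov\'ari--S\'os--Tur\'an embedding guided by a principal eigenvector $\mathbf{x}$ of $A(G)$. Starting from the large eigenvector mass forced by $\rho(G) \approx (1 - \frac{1}{r})n$, I would locate a vertex $v_1$ of simultaneously high degree and large eigenvector weight, so that the subgraph induced on $N(v_1)$ still carries a spectral lower bound of order $(1 - \frac{1}{r-1})|N(v_1)|$. Iterating $r$ times produces nested vertex subsets of polynomially shrinking sizes, and from their common neighborhoods one extracts via Kov\'ari--S\'os--Tur\'an a complete $(r+1)$-partite subgraph with $r$ parts of size $\lfloor c \ln n \rfloor$ and a final part of size at least $\lceil n^{1-\sqrt{c}} \rceil$, matching (i). If the iteration fails at some stage because a common neighborhood turns out unexpectedly small, the spectral deficit must be compensated by a structural one, and an Erd\H{o}s--Simonovits-type removal argument then shows that $G$ differs from $T_{n,r}$ in the claimed number of edges, giving (ii).

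The principal obstacle is calibrating the numerical thresholds. The peculiar constraints $c < r^{-8(r+21)(r+1)}$ and $\varepsilon < 2^{-36} r^{-24}$ must be chosen so that at each of the $r$ iteration steps the spectral margin stays strictly positive after the loss incurred by the Kov\'ari--S\'os--Tur\'an estimate, and the final edit-distance bound $(\varepsilon^{1/4} + c^{1/(8r+8)})n^2$ must simultaneously absorb both error sources: $\varepsilon^{1/4}$ from the cumulative spectral slack and $c^{1/(8r+8)}$ from the supersaturation step. Keeping all these bookkeeping constants aligned through the recursion, and arranging that the two alternatives (i) and (ii) are exhaustive in precisely this parameter regime, is the technical heart of the argument.
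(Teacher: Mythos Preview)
The paper does not prove this theorem at all: it is quoted verbatim from Nikiforov's stability paper \cite{Nikiforov4} and used as a black box (in fact, only indirectly, via Lemma~\ref{lemma-2.3} which is itself cited from \cite{Desai}). There is therefore no ``paper's own proof'' against which to compare your proposal.

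As for the proposal on its own terms: you correctly identify that the argument is a dichotomy between embedding the large blow-up of $K_{r+1}$ and falling back to edit-distance stability, and that the embedding side is driven by iterated K\H{o}v\'ari--S\'os--Tur\'an combined with eigenvector information. That is indeed the shape of Nikiforov's argument. However, what you have written is a plan, not a proof. The crucial transition---``if the iteration fails at some stage \ldots\ an Erd\H{o}s--Simonovits-type removal argument then shows''---is exactly the place where the real work lies, and you have not indicated how the spectral hypothesis $\rho(G)>(1-\tfrac{1}{r}-\varepsilon)n$ is converted into the edge-count hypothesis that the classical Erd\H{o}s--Simonovits stability needs, nor how the quantitative form $(\varepsilon^{1/4}+c^{1/(8r+8)})n^2$ is obtained. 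Nikiforov's actual route passes through a separate spectral-to-edge lemma and a careful supersaturation count; your sketch gestures at both but carries out neither, so as written it is not yet a proof.
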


From Theorem \ref{thm0},
Desai et al. \cite{Desai} obtained the following stability result.
Theorem \ref{thm0} and the following lemma present an efficient approach
to study spectral extremal problems.

\begin{lem} \label{lemma-2.3}\emph{(\cite{Desai})}
Let $F$ be a graph with chromatic number $\chi(F)=r+1$.
For every $\varepsilon>0$, there exist $\delta>0$ and $n_0$ such that
if $G$ is an $F$-free graph on $n\ge n_0$ vertices with $\rho(G)\ge (1-\frac1r-\delta)n$,
then $G$ can be obtained from $T_{n,r}$ by adding and deleting at most $\varepsilon n^2$ edges.
\end{lem}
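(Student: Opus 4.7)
The plan is to derive this stability lemma as a direct consequence of Nikiforov's Theorem~\ref{thm0}, combined with a standard embedding argument that exploits the chromatic number hypothesis $\chi(F)=r+1$. First I would fix the target $\varepsilon>0$ and then choose auxiliary constants $c,\delta>0$ and $n_0$ carefully so as to meet all the constraints of Theorem~\ref{thm0}: namely $0<\delta<2^{-36}r^{-24}$, $\frac{1}{\ln n_0}<c<r^{-8(r+21)(r+1)}$, and also so that $\delta^{1/4}+c^{1/(8r+8)}\le \varepsilon$ (for example, by taking each of $\delta^{1/4}$ and $c^{1/(8r+8)}$ to be at most $\varepsilon/2$). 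The hypothesis $\rho(G)\ge (1-\frac{1}{r}-\delta)n$ then places $G$ into the setting of Theorem~\ref{thm0}, which yields a dichotomy: either (i) $G$ contains the blow-up $K_{r+1}(\lfloor c\ln n\rfloor,\ldots,\lfloor c\ln n\rfloor,\lceil n^{1-\sqrt{c}}\rceil)$, or (ii) $G$ differs from $T_{n,r}$ in fewer than $(\delta^{1/4}+c^{1/(8r+8)})n^2\le \varepsilon n^2$ edges.

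Next I would rule out possibility (i) using $F$-freeness. Because $\chi(F)=r+1$, a proper $(r+1)$-coloring of $F$ realizes $F$ as a subgraph of the balanced complete $(r+1)$-partite graph $K_{r+1}(|V(F)|,\ldots,|V(F)|)$. The blow-up guaranteed by (i) has $r$ parts of size $\lfloor c\ln n\rfloor$ and one part of size $\lceil n^{1-\sqrt{c}}\rceil$, and both of these quantities exceed $|V(F)|$ once $n\ge n_0$ for a suitably large $n_0$ depending only on $|V(F)|$, $c$, and $r$. Hence that blow-up contains a copy of $F$ (map each color class of $F$ injectively into a distinct part of the blow-up), contradicting the $F$-freeness of $G$. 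Therefore (ii) must hold, which by the parameter calibration in the previous paragraph immediately gives the stability conclusion: $G$ and $T_{n,r}$ differ in at most $\varepsilon n^2$ edges.

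The argument is essentially mechanical once Theorem~\ref{thm0} is available, so the only real obstacle is the parameter bookkeeping, together with confirming that the highly unbalanced blow-up in option (i) still accommodates a copy of $F$. The second point is dispatched by the chromatic-number embedding just noted, while the first is handled by taking $\delta$ and $c$ of polynomial type in $\varepsilon$ (with $\delta$ also forced to satisfy Nikiforov's absolute cap $2^{-36}r^{-24}$) and then choosing $n_0$ large enough to make the asymptotic lower bounds on $\lfloor c\ln n\rfloor$ and $\lceil n^{1-\sqrt{c}}\rceil$ effective. No new ideas beyond Theorem~\ref{thm0} and this embedding trick are required.
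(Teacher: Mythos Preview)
Your proposal is correct and matches the approach indicated in the paper: the paper does not prove Lemma~\ref{lemma-2.3} itself but cites it from \cite{Desai} and explicitly states that it is obtained from Theorem~\ref{thm0}, which is precisely the derivation you carry out. The parameter bookkeeping and the chromatic-number embedding to rule out alternative~(i) are exactly the standard argument, and no issues arise.
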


The following spectral extremal result on odd cycles
is due to Nikiforov \cite{Nikiforov2}.

\begin{lem} \label{lemma2.4}\emph{(\!\!\cite{Nikiforov2})}
Let $\ell$ be a given positive integer and $n$ be large enough.
Then, $T_{n,2}$ is the unique extremal graph with respect to $spex(n,C_{2\ell+1})$.
\end{lem}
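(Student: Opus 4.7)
The plan is the standard ``stability plus Perron--vector cleaning plus odd-cycle building'' template for spectral Tur\'{a}n problems. Since $T_{n,2}$ is bipartite it is $C_{2\ell+1}$-free, so any extremal graph $G$ satisfies
$\rho(G)\ge \rho(T_{n,2})\ge \sqrt{\lfloor n/2\rfloor\lceil n/2\rceil}\ge (n-1)/2$.
Because $\chi(C_{2\ell+1})=3$, Lemma \ref{lemma-2.3} applies with $r=2$: for every $\varepsilon>0$, if $n$ is large enough then $G$ can be obtained from $T_{n,2}$ by adding and deleting at most $\varepsilon n^2$ edges. This yields a near-balanced bipartition $V(G)=A\cup B$ matching the two classes of $T_{n,2}$, with only $o(n^2)$ edges inside $A\cup B$ and only $o(n^2)$ non-edges across.

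Next I would use the Perron eigenvector $\mathbf{x}$ (normalized so that $\max_v x_v=1$) to upgrade the ``on average close to $T_{n,2}$'' conclusion to a pointwise statement. The standard device of deleting a vertex $v$ decreases $\rho$ by at most $O(x_v)$, so every vertex whose local structure is far from the bipartite picture must carry negligible Perron weight and can therefore be removed or rearranged without paying in spectral radius. After this cleaning I may assume that every $v\in A$ has $(1/2-o(1))n$ neighbors in $B$, and symmetrically for $B$; in particular every vertex has degree $(1/2-o(1))n$.

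The heart of the argument is to rule out any edge inside $A$ (and symmetrically inside $B$), and so conclude $G$ is bipartite. Suppose $uv\in E(G[A])$. I would build a $u$--$v$ path of length $2\ell$ that uses $uv$ only at its endpoints; glued to $uv$ this produces a copy of $C_{2\ell+1}$, a contradiction. The path is constructed greedily by alternating sides: starting from $u\in A$, pick $w_1\in N(u)\cap B$, then $z_1\in N(w_1)\cap A\setminus\{u,v\}$, then $w_2\in N(z_1)\cap B$, and so on, ending at $w_\ell\in N(z_{\ell-1})\cap N(v)\cap B$. At each step the forbidden set of already-used vertices has size $O(\ell)$, while the admissible neighborhood on the target side has size $(1/2-o(1))n$, so every extension succeeds for $n$ large enough with respect to $\ell$.

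Finally, for a bipartite graph with parts of sizes $a$ and $n-a$ one has $\rho(G)\le \sqrt{a(n-a)}$, with equality only for $K_{a,n-a}$, and this quantity is maximized at $a=\lfloor n/2\rfloor$ with value exactly $\rho(T_{n,2})$. Extremality of $G$ therefore forces $G=T_{n,2}$. The main obstacle I anticipate is the path-building step: one has to choose $\varepsilon$ small enough and track the $O(\ell)$ cumulative forbidden vertices so that the alternating greedy selection is guaranteed to succeed at the last step, where one needs a common $B$-neighbor of two prescribed $A$-vertices. This bookkeeping is delicate but is powered by the gap between the $\Theta(n)$ cross-degrees and the $O(\ell)$ excluded set.
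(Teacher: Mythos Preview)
The paper does not prove Lemma~\ref{lemma2.4}; it is quoted without proof as a result of Nikiforov~\cite{Nikiforov2}, so there is no in-paper argument to compare your proposal against.

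That said, your outline is the standard stability-plus-cleaning template, and it is essentially the $t=1$ specialization of what the paper carries out in detail in Section~\ref{section4} for Theorem~\ref{theorem1.3}. One point deserves correction. In your cleaning step you write that bad vertices ``carry negligible Perron weight and can therefore be removed or rearranged without paying in spectral radius,'' invoking that deleting a vertex $v$ decreases $\rho$ by $O(x_v)$. This is not the right mechanism: one does not modify the extremal graph $G$. The actual argument (compare Lemmas~\ref{lemma4.10} and~\ref{lemma4.11}) is an exchange by contradiction: if some vertex $u$ had low degree, deleting all its incident edges and reattaching $u$ to a large independent set on the opposite side would produce a graph that is still $C_{2\ell+1}$-free but has strictly larger spectral radius, contradicting extremality. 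Hence every vertex of $G$ itself already has cross-degree $(1/2-o(1))n$, with no surgery performed. With that fix, your greedy alternating path construction and the final bipartite optimization $\rho(G)\le\sqrt{a(n-a)}$ go through as you describe. As a historical aside, Lemma~\ref{lemma-2.3} is from a 2022 paper and was not available to Nikiforov in 2008, so his original proof necessarily proceeds by different means.
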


The following result is known as Erd\H{o}s-Gallai theorem.

%\begin{lem} \label{lemma2.4} \emph{(\!\!\cite{Cioaba1})}
%For a non-negative symmetric matrix $M$, a non-negative non-zero vector $Y$ and a positive constant $\alpha$,
%if $MY\ge \alpha Y$, then $\rho(M)\ge \alpha$.
%\end{lem}

\begin{lem} \label{lemma2.5} \emph{(\cite{Erdos3})}
Let $n$ and $\ell$ be two integers with $n\ge \ell\ge 2$.
Then $ex(n,P_\ell)\le\frac{(\ell-2)n}{2}$, with equality if and only if $n=t(\ell-1)$ and $G\cong t K_{\ell-1}$.
\end{lem}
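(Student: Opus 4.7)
The plan is to prove the edge bound by induction on $n$ and track the equality characterization alongside. I would broaden the induction to all $n\ge 1$ (for $n\le \ell-1$ every graph is vacuously $P_\ell$-free). The base case is $n\le \ell-1$, where $e(G)\le \binom{n}{2}\le \frac{(\ell-2)n}{2}$, with equality iff $n=\ell-1$ and $G\cong K_{\ell-1}$.

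For the inductive step with $n\ge \ell$, I would split on the minimum degree. In the first case, $\delta(G)\le \lfloor(\ell-2)/2\rfloor$: delete a vertex $v$ of minimum degree and apply induction to $G-v$, which gives
$$e(G)\le \frac{(\ell-2)(n-1)}{2}+\frac{\ell-2}{2}=\frac{(\ell-2)n}{2}.$$
I would then check that equality cannot actually be attained here when $\ell\ge 3$: equality would force $d(v)=(\ell-2)/2\ge 1$ and $G-v$ to be a disjoint union of copies of $K_{\ell-1}$ (by the inductive equality characterization), but then any neighbor $u$ of $v$ sitting inside some component $K_{\ell-1}$ would allow us to follow $v$ into that clique and traverse it, producing a $P_\ell$.

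In the second case, $\delta(G)\ge \lceil(\ell-1)/2\rceil$: if $G$ is disconnected, apply induction to each component and sum. If $G$ is connected, I would invoke the rotation-extension argument on a longest path $P=v_0v_1\cdots v_k$. Since $P$ is longest, all neighbors of $v_0$ and $v_k$ lie on $P$. Setting $A=\{i:v_0v_i\in E\}$ and $B=\{i:v_kv_{i-1}\in E\}$, both subsets of $\{1,\dots,k\}$ of size $\ge \delta(G)$, if $|A|+|B|>k$ they intersect, producing a Hamilton cycle of $P$; connectivity then yields a strictly longer path, a contradiction. Hence $k\ge 2\delta(G)\ge \ell-1$, so $P$ contains $P_\ell$, contradicting $P_\ell$-freeness. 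Consequently the second case is vacuous for connected $G$.

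Tracing equality back through the induction: equality in a connected $G$ with $n\ge \ell$ is ruled out by the two cases above, so equality forces $G$ to be disconnected and each component to itself attain equality. By induction each component is isomorphic to $K_{\ell-1}$, giving $G\cong tK_{\ell-1}$ and $n=t(\ell-1)$. The main obstacle is executing the rotation-extension step cleanly and, in parallel, confirming that the inductive equality characterization really does propagate (i.e., that Case~1 cannot accidentally produce equality for $\ell\ge 3$). The argument is classical, but combining the sharp bound with the full equality statement requires a little care at every step.
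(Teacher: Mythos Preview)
The paper does not supply a proof of this lemma; it is the classical Erd\H{o}s--Gallai theorem, quoted from \cite{Erdos3} and used as a black box. There is therefore nothing in the paper to compare against.

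As a standalone argument your proposal is correct and is essentially the standard proof: induct on $n$, in the low-degree case delete a vertex $v$ with $d(v)\le\lfloor(\ell-2)/2\rfloor$, and in the high-degree connected case use the rotation--extension (P\'osa) lemma on a longest path to force at least $\ell$ vertices on that path, contradicting $P_\ell$-freeness. The equality analysis is also sound. One small remark on your Case~1 equality discussion: the forcing $d(v)=(\ell-2)/2$ already disposes of all odd $\ell\ge 3$ (the right-hand side is not an integer), so the ``extend $v$ into a $K_{\ell-1}$ component to build a $P_\ell$'' step is only needed for even $\ell\ge 4$; you might say this explicitly, but the logic is correct as written.
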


We note that the best current bound for $ex(n,C_{2\ell})$
was given by He \cite{He}, who improved
on a bound $ex(n,C_{2\ell})\leq \big(80\sqrt{\ell}\log\ell+o(1)\big)n^{1+\frac1\ell}$
of Bukh and Jiang \cite{Bukh} by reducing a factor of
$\sqrt{5\log\ell}$. However, for our purposes the dependence of the
multiplicative constant on $\ell$ is not important.
For convenience, we use the following version, which improves a
best known bound of Verstra\"{e}te \cite{Verstraete} by a factor $8+o(1)$.

\begin{lem} \label{lemma2.6} \emph{(\cite{OP})}
For all $\ell\ge 2$ and $n\geq1$, we have
$$ex(n,C_{2\ell})\le (\ell-1)n(n^{\frac{1}{\ell}}+16).$$
\end{lem}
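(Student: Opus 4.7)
The plan is to proceed by contradiction using a Bondy--Simonovits-style breadth-first-search argument, with the constants chased carefully so that the leading coefficient emerges as $(\ell-1)$ (an eightfold improvement over Verstra\"{e}te's earlier prefactor $8(\ell-1)$). Suppose $G$ is an $n$-vertex $C_{2\ell}$-free graph with $e(G) > (\ell-1)n(n^{1/\ell}+16)$, and set $d := (\ell-1)(n^{1/\ell}+16)$. Since $2e(G) > 2nd$, iteratively deleting any vertex of degree below $d$ removes fewer than $d$ edges per step, preserves $C_{2\ell}$-freeness, and must halt at a non-empty induced subgraph $H\subseteq G$ with $\delta(H) \geq d$ and $|V(H)| \leq n$.

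Fix a root $v_0 \in V(H)$ and define the BFS level sets $L_i := \{u \in V(H) : \mathrm{dist}_H(u,v_0) = i\}$ for $0 \leq i \leq \ell$. The goal is to force $|L_\ell| > n$, which is the desired contradiction. I would prove a level-growth inequality of the form $|L_{i+1}| \geq \frac{d-c}{\ell-1}|L_i|$ for an absolute constant $c = c(\ell)$, chained from $|L_0|=1$ to give $|L_\ell| \geq (n^{1/\ell} + 16 - c/(\ell-1))^\ell > n$ once the additive slack $16$ absorbs $c/(\ell-1)$. The inequality combines a lower bound on $e(L_i, L_{i+1})$ coming from $\delta(H)\geq d$, with the ``leaked'' same-level and backward contributions $2e(L_i)+e(L_{i-1},L_i)$ controlled via the Erd\H{o}s--Gallai path bound (Lemma \ref{lemma2.5}), and an upper bound on the codegree of any vertex of $L_{i+1}$ into $L_i$. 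Both bounds are enforced by the $C_{2\ell}$-free hypothesis via the standard walk-to-cycle extraction: any closed even walk of length at most $2\ell$ assembled from BFS tree paths plus a forbidden ``extra'' edge, or from $\ell$ pairwise-compatible BFS parents of a single $L_{i+1}$-vertex, yields a $\Theta$-subgraph from which a $C_{2\ell}$ is extracted.

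The principal obstacle --- and the place where the eightfold improvement over Verstra\"{e}te is earned --- is the precise bookkeeping inside the growth step. A single backward or same-level edge, closed off by two BFS tree paths through a common ancestor, produces a closed even walk whose length is determined by that ancestor's depth, so the walk-to-cycle extraction must be invoked uniformly across all walk lengths from $4$ up to $2\ell$, rather than only at the maximal length. Making this accounting tight so that both the deficit term $2e(L_i) + e(L_{i-1}, L_i)$ and the codegree loss absorb into the additive slack $16$ requires careful case analysis across the levels, and is where the main technical effort would lie. Once these two estimates are in hand, the geometric iteration and the final contradiction $|L_\ell| > n \geq |V(H)|$ are routine.
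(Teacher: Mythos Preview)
The paper does not prove this lemma at all; it is quoted verbatim from Pikhurko~\cite{OP} and used as a black box (only in the proof of Lemma~\ref{lem5.5}, to bound $e(G)$). There is therefore nothing in the paper to compare your proposal against.

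That said, your outline does follow the overall architecture of Pikhurko's actual argument: pass to a subgraph of large minimum degree, run BFS from a root, and force geometric growth of the level sets until $|L_\ell|>n$. Where your sketch diverges from the real proof is in the mechanism that controls edges within and between levels. You invoke a ``walk-to-cycle extraction'' from closed even walks built out of BFS tree paths plus a stray edge, together with a codegree bound for $L_{i+1}$ into $L_i$. Pikhurko does not argue this way. His key device is a structural lemma: if a bipartite graph $H$ on parts $(A,B)$ with $e(H)\geq (2(\ell-1)+1)|B|$ contains a path of length $2\ell-1$, then it contains a $\Theta$-graph (two vertices joined by three internally disjoint paths) whose total edge count is between $2\ell$ and $4\ell-2$; and any such $\Theta$-graph in a BFS layer forces a $C_{2\ell}$ in $G$. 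This $\Theta$-graph lemma is what buys the sharp prefactor $(\ell-1)$, replacing the cruder averaging in Verstra\"{e}te's earlier argument. Your proposed codegree-plus-Erd\H{o}s--Gallai accounting would recover a bound of the right shape, but not with the stated constant: the bottleneck is that a vertex of $L_{i+1}$ can have many more than $\ell-1$ parents in $L_i$ without immediately creating a $C_{2\ell}$, so the division by $\ell-1$ in your growth inequality is not justified as written. If you want to reproduce Pikhurko's constant, you will need his $\Theta$-graph lemma or an equivalent.
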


\section{Proof of Theorem \ref{theorem1.1}}\label{section3}

In this section, we give the proof of Theorem \ref{theorem1.1}.
More precisely, we will extend the Tur\'{a}n-type result on disjoint triangles
to the disjoint union of general odd cycles.
First of all, we shall prove two structural lemmas.

\begin{lem}\label{lemma3.1}
Let $t,\ell,n$ be three positive integers with $n\ge 8t\ell+4\ell+4t-6$.
Let $G$ be a graph on $n$ vertices with $\delta(G)\ge \lfloor\frac{n}{2}\rfloor$,
and $S\subseteq V(G)$ with $|S|\le (t-1)(2\ell+1)$.
If $G-S$ contains a triangle $C^*$, then $G-S$ also contains a $(2\ell+1)$-cycle.
\end{lem}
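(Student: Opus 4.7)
The plan is to prove Lemma~\ref{lemma3.1} by iteratively extending the triangle $C^* = xyz$ in $H := G - S$ into longer odd cycles. Starting from $C_1 := C^*$ (length $3$), I construct a sequence $C_1, C_2, \ldots, C_\ell$ of cycles in $H$ with $|V(C_k)| = 2k + 1$, so that $C_\ell$ is the desired $(2\ell+1)$-cycle in $G - S$.

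The extension step from $C_k$ to $C_{k+1}$ takes the following form: I seek distinct vertices $u_1, u_2 \in V(H) \setminus V(C_k)$ with $u_1 u_2 \in E(H)$ together with an edge $c_i c_{i+1}$ of $C_k$ satisfying $u_1 c_i, u_2 c_{i+1} \in E(H)$. Replacing $c_i c_{i+1}$ by the path with edges $c_i u_1, u_1 u_2, u_2 c_{i+1}$ then yields $C_{k+1}$ of length $2k + 3$. The key estimate is $\delta(H) \ge \delta(G) - |S| \ge \lfloor n/2 \rfloor - (t-1)(2\ell+1)$; combined with the hypothesis $n \ge 8t\ell + 4\ell + 4t - 6$, this gives $|N_H(c) \setminus V(C_k)| \ge \lfloor n/2 \rfloor - |S| - 2\ell + 2$ for each $c \in V(C_k)$ and for every $k \le \ell - 1$, so every vertex of the current cycle has many external neighbors in $H$.

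The main obstacle is guaranteeing the existence of the configuration $(u_1, u_2, c_i c_{i+1})$ at each step. A per-edge inclusion-exclusion bound on the number of $H$-edges between $N_H(c_i) \setminus V(C_k)$ and $N_H(c_{i+1}) \setminus V(C_k)$ is too weak, as the naive lower bound can be non-positive: common neighborhoods of two vertices in $H$ can be very small under our degree hypothesis. I plan to overcome this by a global double-counting argument summed over all $2k + 1$ edges of $C_k$: the total number of ordered $4$-walks $u_1 c_i c_{i+1} u_2$ with $u_1, u_2 \in V(H) \setminus V(C_k)$ and $c_i c_{i+1} \in E(C_k)$ is of order $\ell n^2$ by the degree bound, and a positive proportion of these should satisfy $u_1 u_2 \in E(H)$ thanks to the overall edge density of $H$. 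Should the averaging fail in degenerate near-bipartite configurations of $H$, I plan to fall back on a P\'{o}sa-style rotation of $C_k$ to reroute a portion of the current cycle through external vertices, exploiting the triangle $C^*$ (whose presence excludes bipartiteness) to supply an alternative fitting configuration. Making this double-counting rigorous and handling the exceptional structural cases (and the constraints $u_1 \ne u_2$ and $u_1, u_2 \notin S$) is the most delicate step.
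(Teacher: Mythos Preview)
Your extension strategy is genuinely different from the paper's, but the plan as stated has a real gap at exactly the point you flag as delicate. The heuristic ``a positive proportion of the $4$-walks $u_1c_ic_{i+1}u_2$ satisfy $u_1u_2\in E(H)$ thanks to the overall edge density of $H$'' is not a proof: the pairs $(u_1,u_2)$ you are sampling lie in $N_H(c_i)\times N_H(c_{i+1})$, and nothing in the hypothesis $\delta(H)\ge\lfloor n/2\rfloor-|S|$ prevents these particular product sets from being edge-free. Concretely, if $H$ is close to $K_{m,m}$ and the edge $c_ic_{i+1}$ has both endpoints in the same part, then $N_H(c_i)$ and $N_H(c_{i+1})$ both sit in the other part and span no edges; the averaging over all $2k+1$ cycle edges does not obviously rescue you, and the ``P\'osa-style rotation'' fallback is not spelled out. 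So the proposal stops short of an actual argument for the one step that matters.

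The paper sidesteps this difficulty entirely with a much simpler construction. From the triangle $u_0v_0w_0$ it grows three vertex-disjoint paths $u_0u_1\cdots u_{\ell-1}$, $v_0v_1\cdots v_{\ell-1}$, $w_0w_1\cdots w_{\ell-1}$ inside $G'=G-S$, which only needs $\delta(G')>3\ell$. At the final step, each of the three endpoints $u_{\ell-1},v_{\ell-1},w_{\ell-1}$ has more than $\tfrac13|V(G')\setminus V(H_{\ell-1})|$ neighbours outside the tripod (this is where the bound $n\ge 8t\ell+4\ell+4t-6$ is used), so by pigeonhole some vertex $y$ outside the tripod is adjacent to two of them, say $v_{\ell-1}$ and $w_{\ell-1}$. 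Then $v_0v_1\cdots v_{\ell-1}\,y\,w_{\ell-1}\cdots w_1w_0v_0$ is the desired $(2\ell+1)$-cycle. The point is that asking for a \emph{common neighbour of two out of three} large sets is automatic by pigeonhole, whereas your scheme asks for an \emph{edge between two specific} large sets, which is not.
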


\begin{proof}
The result holds trivially for $\ell=1$.
Assume now that $\ell\ge 2$.
Set $G'=G-S$ and $C^*=u_0v_0w_0u_0$.
Note that $\delta(G)\ge \lfloor\frac{n}{2}\rfloor$ and $n\ge 8t\ell+4\ell+4t-6$. Then,
$$\delta(G')\ge \delta(G)-|S|\ge \Big\lfloor\frac{n}{2}\Big\rfloor-(t-1)(2\ell+1)> 3\ell.$$
Hence, there exist three vertices $u_1,v_1,w_1$ such that
$u_1\in N_{G'}(u_0)\setminus V(C^*)$, $v_1\in N_{G'}(v_0)\setminus (V(C^*)\cup\{u_1\})$
and $w_1\in N_{G'}(w_0)\setminus (V(C^*)\cup\{u_1,v_1\})$.
Now let $H_0=C^*$. Moreover, we define a subgraph $H_1\subseteq G'$ with $V(H_1)=V(H_0)\cup\{u_1,v_1,w_1\}$ and $E(H_1)=E(H_0)\cup\{u_0u_1,v_0v_1,w_0w_1\}$.
Similarly, there exist three vertices $u_2,v_2,w_2$ such that
$u_2\in N_{G'}(u_1)\setminus V(H_1)$,
$v_2\in N_{G'}(v_1)\setminus (V(H_1)\cup \{u_2\})$ and $w_2\in N_{G'}(w_1)\setminus (V(H_1)\cup \{u_2,v_2\}).$
Repeat the above steps, we can obtain a sequence of subgraphs $H_0,\cdots,H_{\ell-1}$ such that
$V(H_i)=V(H_{i-1})\cup\{u_{i},v_{i},w_{i}\}$ and $$E(H_i)=E(H_{i-1})\cup\{u_{i-1}u_i,v_{i-1}v_i,w_{i-1}w_i\}$$
for $1\le i \le \ell-1$. Note that $|H_i|=3i+3$ for each $i\in \{0,\ldots,\ell-1\}$.
Then we can easily check that $\frac{n+3}4\geq|S|+|H_{\ell-1}|+\frac14$.
Furthermore, for each $x\in \{u_{\ell-1},v_{\ell-1},w_{\ell-1}\}\subseteq V(H_{\ell-1})$
we can see that
\begin{eqnarray*}
|N_{G'}(x)\setminus V(H_{\ell-1})|&\ge& d_{G'}(x)-(|H_{\ell-1}|-1)  \nonumber\\
  &\ge& \delta(G')-|H_{\ell-1}|+1  \nonumber\\
  &\ge& \frac{n-1}{2}-|S|-|H_{\ell-1}|+1  \nonumber\\
  &=& \frac13\Big(n-|S|-|H_{\ell-1}|\Big)+\frac23\Big(\frac{n+3}{4}-|S|-|H_{\ell-1}|\Big).
\end{eqnarray*}
Thus we have $$3|N_{G'}(x)\setminus V(H_{\ell-1})|>n-|S|-|H_{\ell-1}|=|V(G')\setminus V(H_{\ell-1})|.$$
By the pigeonhole principle,
there exists some $y\in V(G')\setminus V(H_{{\ell-1}})$ such that
$y$ is adjacent to at least two vertices, say $v_{\ell-1}$ and $w_{\ell-1}$, of $\{u_{\ell-1},v_{\ell-1},w_{\ell-1}\}$.
Hence, $G'[\{y,v_0,\ldots,v_{\ell-1},w_0,\ldots,w_{\ell-1}\}]$ contains
a cycle of length $2\ell+1$, as $v_0w_0\in E(H_0)$.
The result follows.
\end{proof}

\begin{lem}\label{lemma3.2}
Let $t,k,n$ be three integers with $t\ge 2$, $k\ge \lfloor\frac{19t-9}{2}\rfloor$ and $n\ge \left\lfloor\frac{(k-t)^2}{4\lfloor\frac{t+1}{2}\rfloor}\right\rfloor+(k+1)$.
If $G$ is a graph of order $n$ with $e(G)\geq ex(n,tC_3)$ and $\delta(G)\le \lfloor\frac{n}{2}\rfloor-1$,
then there exists an induced subgraph $G'\subseteq G$ on $n'\geq k$ vertices with
$e(G')\geq ex(n',tC_3)+1$ and $\delta(G')\ge \lfloor\frac{n'}{2}\rfloor$.
\end{lem}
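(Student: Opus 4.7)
The plan is to iteratively strip a minimum-degree vertex and track how far the edge count exceeds the Tur\'{a}n bound. Set $G_0=G$ and $n_0=n$, and recursively, as long as $\delta(G_j)\le\lfloor n_j/2\rfloor-1$, choose a minimum-degree vertex $v_j$ and put $G_{j+1}=G_j-v_j$, $n_{j+1}=n_j-1$. Let $j^\star$ be the first index at which $\delta(G_{j^\star})\ge\lfloor n_{j^\star}/2\rfloor$, and set $G':=G_{j^\star}$, $n':=n_{j^\star}$. The hypothesis $\delta(G)\le\lfloor n/2\rfloor-1$ forces $j^\star\ge 1$, so the conclusion follows once I show $n'\ge k$ and $e(G')\ge ex(n',tC_3)+1$.

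The arithmetic engine is a per-step surplus estimate. Write $b:=\lfloor(t+1)/2\rfloor$. Applying the Erd\H{o}s--Moon formula (Lemma \ref{lemma-2.1}, valid throughout because our vertex counts stay $\ge k\ge\lfloor(19t-9)/2\rfloor$) and cancelling gives
$$ex(m,tC_3)-ex(m-1,tC_3)=(t-1)+\lceil(m-t)/2\rceil$$
for every $m$ that arises. A short case analysis on the parities of $m$ and $t$ shows this is always at least $\lfloor m/2\rfloor-1+b$. Consequently, whenever we delete $v_j$ (of degree at most $\lfloor n_j/2\rfloor-1$), the quantity $e(G_j)-ex(n_j,tC_3)$ grows by at least $b$. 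By induction on $j$, for every $0\le j\le j^\star$ we have $e(G_j)\ge ex(n_j,tC_3)+jb$. Evaluated at $j=j^\star$, this yields $e(G')\ge ex(n',tC_3)+b\ge ex(n',tC_3)+1$, since $b\ge 1$ for $t\ge 2$.

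It remains to verify $n'\ge k$. Suppose for contradiction that $j^\star\ge n-k+1$, so in particular $G_{n-k}$ is defined and $\delta(G_{n-k})\le\lfloor k/2\rfloor-1$. The inductive bound gives $e(G_{n-k})\ge ex(k,tC_3)+(n-k)b$, while trivially $e(G_{n-k})\le\binom{k}{2}$. Using the identity $\binom{m}{2}-\lfloor m^2/4\rfloor=\lfloor(m-1)^2/4\rfloor$ at $m=k-t+1$ together with the Erd\H{o}s--Moon formula produces $\binom{k}{2}-ex(k,tC_3)=\lfloor(k-t)^2/4\rfloor$, hence $(n-k)b\le\lfloor(k-t)^2/4\rfloor$. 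On the other hand, the hypothesis $n\ge\lfloor(k-t)^2/(4b)\rfloor+(k+1)$ rewrites as $n-k\ge\lfloor(k-t)^2/(4b)\rfloor+1$, so $(n-k)b\ge b\bigl(\lfloor(k-t)^2/(4b)\rfloor+1\bigr)>(k-t)^2/4$; since $(n-k)b$ is an integer, this forces $(n-k)b\ge\lfloor(k-t)^2/4\rfloor+1$, contradicting the previous inequality. Therefore $j^\star\le n-k$ and $n'\ge k$.

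The main obstacle I expect is the parity bookkeeping: the per-step surplus must come out to exactly $\lfloor(t+1)/2\rfloor$ in the tight parity cases (a weaker bound would force a stronger hypothesis on $n$), and the final contradiction requires aligning the floor $\lfloor(k-t)^2/(4b)\rfloor$ in the hypothesis with the correct floor $\lfloor(k-t)^2/4\rfloor$ in the edge gap $\binom{k}{2}-ex(k,tC_3)$. Once these two computations are matched, the iterative deletion closes cleanly.
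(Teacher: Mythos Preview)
Your proposal is correct and mirrors the paper's proof: both iteratively delete a minimum-degree vertex, track the surplus $e(G_j)-ex(n_j,tC_3)$ growing by at least $\lfloor(t+1)/2\rfloor$ per step via the Erd\H{o}s--Moon formula, and reach a contradiction at step $n-k$ by comparing $e(G_{n-k})$ with $\binom{k}{2}$. Your identity $\binom{k}{2}-ex(k,tC_3)=\lfloor(k-t)^2/4\rfloor$ is just a clean repackaging of the paper's final chain of inequalities, and your per-step bound $(t-1)+\lceil(m-t)/2\rceil\ge\lfloor m/2\rfloor-1+b$ is exactly the paper's $\lfloor(m+t-1)/2\rfloor-\lfloor(m-2)/2\rfloor\ge\lfloor(t+1)/2\rfloor$.
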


\begin{proof}
By Lemma \ref{lemma-2.1}, for any integer $n^*\ge \lfloor\frac{19t-7}{2}\rfloor$ we have
\begin{align}\label{align1}
ex(n^*,tC_3)-ex(n^*-1,tC_3)=\Big\lfloor\frac{n^*+t-1}{2}\Big\rfloor.
\end{align}
Since $\delta(G)\leq\lfloor\frac{n}{2}\rfloor-1$,
there is a vertex $u_0\in V(G)$ such that
$d_G(u_0)\le \lfloor\frac{n-2}{2}\rfloor$.
Set $G_0=G$ and $G_1=G_0-\{u_0\}$.
Combining $e(G_0)\geq ex(n,tC_3)$, $d_{G_0}(u_0)\le \lfloor\frac{n-2}{2}\rfloor$ and \eqref{align1} gives
\begin{align}\label{align2}
e(G_1)=e(G_0)-d_{G_0}(u_0)\ge ex(n-1,tC_3)+\Big\lfloor\frac{t+1}{2}\Big\rfloor,
\end{align}
as $\lfloor\frac{n+t-1}{2}\rfloor-\lfloor\frac{n-2}{2}\rfloor
\geq\lfloor\frac{t+1}{2}\rfloor$.
Now, if $\delta(G_1)\ge \lfloor\frac{n-1}{2}\rfloor$,
then we define $G'=G_1$ and we are done.
Otherwise, there is a vertex $u_1\in V(G_1)$ such that $d_{G_1}(u_1)\le \lfloor\frac{n-3}{2}\rfloor$.
Then, we set $G_2=G_1-\{u_1\}$. By \eqref{align1} and \eqref{align2}, we obtain
$$e(G_2)=e(G_1)-d_{G_1}(u_1)\ge ex(n-2,tC_3)+2\Big\lfloor\frac{t+1}{2}\Big\rfloor,$$
as $\lfloor\frac{n+t-2}{2}\rfloor-\lfloor\frac{n-3}{2}\rfloor\geq
\lfloor\frac{t+1}{2}\rfloor$.
Repeating the above steps,
we obtain either a $G_i$ for some $i\leq n-k-1$
such that it is a desired induced subgraph or a sequence of induced subgraphs $G_0,G_1,\cdots,G_{n-k}$
such that $|G_i|=n-i$ and
\begin{align}\label{align2'}
e({G_i})\ge ex(n-i,tC_3)+i\Big\lfloor\frac{t+1}{2}\Big\rfloor
\end{align}
for $1\le i\le n-k$.
Since $n\ge \left\lfloor\frac{(k-t)^2}{4\lfloor\frac{t+1}{2}\rfloor}\right\rfloor+(k+1)$,
we have
\begin{align}\label{align2''}
(n-k)\Big\lfloor\frac{t+1}{2}\Big\rfloor> \frac{(k-t)^2}{4}\geq \binom{k-t+1}{2}-\Big\lfloor\frac{(k-t+1)^2}{4}\Big\rfloor.
\end{align}
From Lemma \ref{lemma-2.1} we know that
$$ex(k,tC_3)=\binom{t-1}{2}+(t-1)(k-t+1)+\Big\lfloor\frac{(k-t+1)^2}{4}\Big\rfloor.$$
Combining the above equality with (\ref{align2'}) and (\ref{align2''}), we obtain
\begin{eqnarray*}
e(G_{n-k})\geq ex(k,tC_3)+(n-k)\Big\lfloor\frac{t+1}{2}\Big\rfloor
  >\binom{t-1}{2}+(t-1)(k-t+1)+\binom{k-t+1}{2}=\binom{k}{2},
\end{eqnarray*}
contradicting  $|G_{n-k}|=k$.
Hence, $G_i$ is a desired induced subgraph for some $i\leq n-k-1.$
\end{proof}

Having Lemmas \ref{lemma3.1} and \ref{lemma3.2},
we are now ready to give the proof of Theorem \ref{theorem1.1}.
Recall that $t\ge 2$, $\ell\ge 2$ and $n\ge \left\lfloor\frac{(8t\ell+4\ell+3t-6)^2}{4\lfloor\frac{t}{2}\rfloor}
\right\rfloor+8t\ell+4t+4\ell-5$.
For convenience, we denote $G^*=K_{t-1}+T_{n-t+1,2}$.

\begin{proof}
By Lemma \ref{lemma-2.1}, we have $e(G^*)=ex(n,tC_3)$ for $t\ge 2$ and $n\ge \lfloor\frac{19t-9}{2}\rfloor$.
Moreover, we can easily check that $G^*$ contains at most $t-1$ vertex-disjoint copies of $C_{2\ell+1}$ for each positive integer $\ell$,
as every odd cycle in $G^*$ must occupy at least one vertex in the
$(k-1)$-clique. Let $G$ be an extremal graph with respect to $ex(n,tC_{2\ell+1})$.
Then $$e(G)=ex(n,tC_{2\ell+1})\ge e(G^*)=ex(n,tC_3).$$

Set $k=8t\ell+4\ell+4t-6$. Since $\ell\geq2$, we have $k\geq \big\lfloor\frac{19t-9}2\big\rfloor.$
Suppose now that $\delta(G)\le \lfloor\frac{n}{2}\rfloor-1$. Then
by Lemma~\ref{lemma3.2}, there exists an induced subgraph $G'\subseteq G$ on
$n'\ge k$ vertices such that $e(G')\geq ex(n',tC_3)+1$
and $\delta(G')\ge \lfloor\frac{n'}{2}\rfloor$.
Furthermore, by Lemma~\ref{lemma-2.1}, $G'$ contains $t$ vertex-disjoint triangles $C^1,C^2,\dots,C^t$.

Let $S_1=\cup_{i=2}^{t}V(C^i)$.
Then $|S_1|=3(t-1)\le(t-1)(2\ell+1)$, and
$G'-S_1$ contains a triangle $C^1$.
By Lemma~\ref{lemma3.1}, $G'-S_1$ also contains a $(2\ell+1)$-cycle ${C^1}^*$.
Let $S_2=V({C^1}^*)\cup (\cup_{i=3}^{t}V(C^i))$.
Then $|S_2|=(2\ell+1)+3(t-2)\le (t-1)(2\ell+1)$, and
$G'-S_2$ contains a triangle $C^2$.
Again by Lemma~\ref{lemma3.1}, $G'-S_2$ also contains a $(2\ell+1)$-cycle ${C^2}^*$.

Repeating the above steps, we obtain a sequence of subsets $S_1,\cdots,S_t$ such that
$$S_j=\big(\cup_{i=1}^{j-1}V({C^i}^*)\big)\cup \big(\cup_{i=j+1}^{t}V(C^i)\big)$$
and $G'-S_j$ contains a $(2\ell+1)$-cycle ${C^j}^*$ for $2\le j \le t$.
Hence, $G'$ contains $t$ disjoint $(2\ell+1)$-cycles ${C^1}^*,\dots,{C^t}^*$,
contradicting the fact that $G$ is $tC_{2\ell+1}$-free.
Therefore, $\delta(G)\ge \lfloor\frac{n}{2}\rfloor$.

Recall that $e(G)\ge ex(n,tC_3)$. Then
by Lemma \ref{lemma-2.1}, if $G\not\cong G^*$,
then $G$ contains $t$ vertex-disjoint triangles.
Furthermore, by Lemma \ref{lemma3.1} and a similar way as above to $G'$,
we can find $t$ vertex-disjoint $(2\ell+1)$-cycles in $G$, a contradiction.
Therefore, $G\cong G^*$.
This completes the proof of Theorem \ref{theorem1.1}.
\end{proof}

\section{Proof of Theorem \ref{theorem1.3}}\label{section4}

In this section, we give a proof of Theorem \ref{theorem1.3}.
By Lemma \ref{lemma2.4}, it holds directly for $t=1$.
In the following, assume that $t\ge 2$
and $G$ is an extremal graph with respect to $spex(n,tC_{2\ell+1})$.
Clearly, $G$ is connected
(otherwise, we can add a new edge between two distinct components of $G$).
By Perron-Frobenius theorem, there exists a positive unit eigenvector
$X=(x_1,\ldots,x_n)^T$ corresponding to $\rho(G)$.
Assume that $u^*\in V(G)$ with $x_{u^*}=\max\{x_i~|~i\in V(G)\}$.
We also choose a positive constant $\eta<\frac{1}{75}$,
which will be frequently used
in the proof.
Let $G^*=K_{t-1}+T_{n-t+1,2}$, where $G^*=T_{n,2}$ for $t=1$.
We shall prove $G\cong G^*$ for $n$ sufficiently large.

\begin{lem}\label{lemma4.1}
$\rho(G)\ge\frac{n}{2}+(t-1)-\frac{t^2}{2n}.$
\end{lem}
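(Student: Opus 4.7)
The plan is to exhibit $G^* = K_{t-1} + T_{n-t+1,2}$ as a $tC_{2\ell+1}$-free competitor and then estimate $\rho(G^*)$ from below by the average degree. I would first verify that $G^*$ is indeed $tC_{2\ell+1}$-free: since $T_{n-t+1,2}$ is bipartite, every odd cycle in $G^*$ must contain at least one vertex of the $(t-1)$-clique, so $G^*$ admits at most $t-1$ pairwise vertex-disjoint odd cycles. By the extremality of $G$ with respect to $spex(n,tC_{2\ell+1})$, this gives $\rho(G) \ge \rho(G^*)$.

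Next I would lower bound $\rho(G^*)$ via the Rayleigh quotient applied to the all-ones vector, yielding $\rho(G^*) \ge \frac{2e(G^*)}{n}$. By Lemma~\ref{lemma-2.1} (valid since $n$ is large enough),
\[
e(G^*) = \binom{t-1}{2} + (t-1)(n-t+1) + \Big\lfloor\frac{(n-t+1)^2}{4}\Big\rfloor.
\]

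Then I would carry out a short algebraic simplification, writing $m = n-t+1$ and using $\lfloor m^{2}/4\rfloor \ge m^{2}/4 - 1/4$. Expanding $(n-t+1)^2 = n^{2} - 2(t-1)n + (t-1)^{2}$ and collecting like terms, the cross terms in $n$ produce coefficient $(t-1)$, while the constant terms collapse to $-(t^{2}-1)/2 - 1/2 = -t^{2}/2$. Dividing by $n$ then yields
\[
\rho(G^*) \ge \frac{2e(G^*)}{n} \ge \frac{n}{2} + (t-1) - \frac{t^{2}}{2n},
\]
which combined with $\rho(G) \ge \rho(G^*)$ gives the lemma.

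There is no real obstacle here: the argument is entirely routine, with the mild bookkeeping being the handling of the floor function $\lfloor m^2/4\rfloor$, which contributes at most a $1/2$ loss that is absorbed into the $-t^2/(2n)$ term (noting that $-(t^2-1)/(2n) - 1/(2n) = -t^2/(2n)$). The only point to stress is that the test vector argument is tight enough to match the desired constant; no eigenvector equation or quotient matrix is needed.
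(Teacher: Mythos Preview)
Your proposal is correct and follows essentially the same approach as the paper: exhibit $G^*=K_{t-1}+T_{n-t+1,2}$ as a $tC_{2\ell+1}$-free competitor, then bound $\rho(G^*)$ below by the average degree $2e(G^*)/n$ and carry out the same algebraic simplification. The only cosmetic difference is that the paper cites Theorem~\ref{theorem1.1} to justify considering $G^*$, whereas you argue its $tC_{2\ell+1}$-freeness directly; your route is slightly more economical since extremality of $G^*$ for the edge count is not actually needed here.
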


\begin{proof}
By Theorem \ref{theorem1.1}, $G^*$ is an extremal graph with respect to $ex(n,tC_{2\ell+1})$.
Since $e(T_{n-t+1,2})=\big\lfloor\frac{(n-t+1)^2}4\big\rfloor
\geq\frac{(n-t+1)^2-1}4,$
we have
$$e(G^*)=e(K_{t-1})+e(T_{n-t+1,2})+(t-1)(n-t+1)\ge \frac{1}{4}n^2+\frac{t-1}{2}n-\frac{t^2}{4}. $$
Using the Rayleigh quotient gives
$$\rho(G)\ge \rho(G^*)\ge \frac{\mathbf{1}^TA(G^*)\mathbf{1}}{\mathbf{1}^T\mathbf{1}}=\frac{2e(G^*)}{n}\ge \frac{n}{2}+(t-1)-\frac{t^2}{2n},$$
as desired.
\end{proof}

\begin{lem}\label{lemma4.2}
For $n$ sufficiently large, $e(G)\ge \big(\frac{1}{4}-\frac12\eta^2\big)n^2.$
Furthermore, $G$ admits a partition $V(G)=V_1\cup V_2$ such that $e(V_1,V_2)$ attains the maximum, $e(V_1)+e(V_2)\le \frac12\eta^2 n^2$ and $\big||V_i|-\frac{n}{2}\big|\le\eta n$ for $i\in \{1,2\}.$
\end{lem}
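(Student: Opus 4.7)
The plan is to combine Lemma~\ref{lemma4.1} with the stability theorem of Desai et al.\ (Lemma~\ref{lemma-2.3}) applied to $F=tC_{2\ell+1}$, whose chromatic number is $3$, so $r=2$. I would fix $\varepsilon:=\eta^{2}/4$ and let $\delta>0$ and $n_{0}$ be the constants produced by Lemma~\ref{lemma-2.3}. Since Lemma~\ref{lemma4.1} gives $\rho(G)\ge n/2+(t-1)-t^{2}/(2n)$, which exceeds $(1/2-\delta)n$ for $n$ sufficiently large, the hypothesis of Lemma~\ref{lemma-2.3} is satisfied. Hence there is a bipartition $V(G)=A_{1}\cup A_{2}$ with $|A_{1}|=\lfloor n/2\rfloor$ and $|A_{2}|=\lceil n/2\rceil$ such that $G$ differs from $T_{n,2}$ (with these parts) by at most $\varepsilon n^{2}$ edges in total added and deleted.

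The edge lower bound is then immediate: at most $\varepsilon n^{2}$ of the edges of $T_{n,2}$ have been deleted in passing to $G$, so
\[
e(G)\ge e(T_{n,2})-\varepsilon n^{2}\ge \lfloor n^{2}/4\rfloor-(\eta^{2}/4)n^{2}\ge (1/4-\eta^{2}/2)n^{2}
\]
for $n$ large. For the partition statement, let $(V_{1},V_{2})$ be a bipartition of $V(G)$ attaining $\max e(V_{1},V_{2})$. The ``added'' edges relative to $T_{n,2}$ are precisely those of $G$ lying inside $A_{1}$ or inside $A_{2}$, so $e(A_{1})+e(A_{2})\le \varepsilon n^{2}$. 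Since $(V_{1},V_{2})$ is a max cut, $e(V_{1},V_{2})\ge e(A_{1},A_{2})$, whence
\[
e(V_{1})+e(V_{2})=e(G)-e(V_{1},V_{2})\le e(G)-e(A_{1},A_{2})=e(A_{1})+e(A_{2})\le \varepsilon n^{2}\le \eta^{2}n^{2}/2.
\]

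For the balance of the parts, write $|V_{1}|=n/2+x$ and $|V_{2}|=n/2-x$. Then $|V_{1}|\,|V_{2}|=n^{2}/4-x^{2}$, and the chain $|V_{1}|\,|V_{2}|\ge e(V_{1},V_{2})\ge |A_{1}|\,|A_{2}|-\varepsilon n^{2}\ge n^{2}/4-\varepsilon n^{2}-1$ yields $x^{2}\le \varepsilon n^{2}+1\le \eta^{2}n^{2}$ for $n$ large, so $\big||V_{i}|-n/2\big|\le \eta n$. The entire argument is essentially routine constant-chasing once the stability framework is invoked; the only point requiring minor care is to pick $\varepsilon$ small enough (as a function of $\eta$) so that all three target inequalities hold simultaneously, after which $n_{0}$ and $\delta$ are determined by Lemma~\ref{lemma-2.3}. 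I do not expect any substantive obstacle beyond this calibration.
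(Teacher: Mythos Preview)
Your proposal is correct and follows essentially the same route as the paper: invoke Lemma~\ref{lemma-2.3} with $r=2$ and a suitably small $\varepsilon$ (the paper takes $\varepsilon<\tfrac12\eta^{2}$, you take $\varepsilon=\eta^{2}/4$), read off the edge lower bound and a balanced bipartition with few interior edges, pass to a max-cut to get $e(V_{1})+e(V_{2})\le\tfrac12\eta^{2}n^{2}$, and then deduce the balance of $(V_{1},V_{2})$. The only cosmetic difference is in the last step: the paper bounds $a^{2}$ via $e(G)\le |V_{1}||V_{2}|+e(V_{1})+e(V_{2})$ together with the lower bound on $e(G)$, whereas you compare $|V_{1}||V_{2}|\ge e(V_{1},V_{2})\ge e(A_{1},A_{2})$; both arguments are equally short and yield the required $|a|\le\eta n$.
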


\begin{proof}
Note that $\chi(tC_{2l+1})=3$ and $G$ is $tC_{2l+1}$-free.
Moreover, by Lemma \ref{lemma4.1}, $\rho(G)\ge\frac{n}{2}+(t-1)-\frac{t^2}{2n}$.
Let $\varepsilon$ be a positive constant with $\varepsilon<\frac12\eta^2$.
Then by Lemma \ref{lemma-2.3},
$e(G)\geq \frac{1}{4}n^2-\frac12\eta^2 n^2$, and
there exists a bipartition $V(G)=U_1\cup U_2$ such that
$\lfloor\frac{n}{2}\rfloor\le |U_1|\leq|U_2|\le \lceil\frac{n}{2}\rceil$ and $e(U_1)+e(U_2)\le \frac12\eta^2 n^2$.
We now select a new bipartition $V(G)=V_1\cup V_2$
such that $e(V_1,V_2)$ attains the maximum.
Then $e(V_1)+e(V_2)$ attains the minimum, and $$e(V_1)+e(V_2)\le e(U_1)+e(U_2)\le \frac12\eta^2 n^2.$$
On the other hand, assume that $|V_1|=\frac n2-a$, then $|V_2|=\frac n2+a$.
Thus,
$$e(G)\le |V_1||V_2|+e(V_1)+e(V_2)\le \frac{1}{4}n^2-a^2+\frac12\eta^2 n^2.$$
Combining $e(G)\ge \frac{1}{4}n^2-\frac12\eta^2 n^2$ gives $a^2\le\eta^2 n^2$,
and so $|a|\le\eta n$.
\end{proof}

In the following, we shall define two vertex subsets $U$ and $W$ of $G$.

\begin{lem}\label{lemma4.3}
Let $U=\{v\in V(G)~|~d_G(v)\le \big(\frac{1}{2}-4\eta\big)n\}.$
Then we have $|U|\le \eta n$.
\end{lem}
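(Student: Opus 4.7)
The plan is to argue by a double count of the non-edges in the bipartite part $G[V_1,V_2]$ supplied by Lemma~\ref{lemma4.2}. Each low-degree vertex (i.e.\ a vertex of $U$) must miss many potential neighbors on the opposite side of the partition, yet the bipartite part is already almost complete, which forces $|U|$ to be small.

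First, I would quantify how close $G[V_1,V_2]$ is to being complete bipartite. Writing $|V_1|=\frac{n}{2}-a$ and $|V_2|=\frac{n}{2}+a$ with $|a|\le \eta n$, and using Lemma~\ref{lemma4.2},
\begin{equation*}
e(V_1,V_2)=e(G)-e(V_1)-e(V_2)\ge \Big(\tfrac14-\tfrac12\eta^2\Big)n^2-\tfrac12\eta^2 n^2=\tfrac14 n^2-\eta^2 n^2.
\end{equation*}
Let $\bar{e}$ denote the number of non-edges in $G[V_1,V_2]$. Then
\begin{equation*}
\bar{e}=|V_1|\,|V_2|-e(V_1,V_2)\le \Big(\tfrac14 n^2-a^2\Big)-\Big(\tfrac14 n^2-\eta^2 n^2\Big)\le \eta^2 n^2.
\end{equation*}

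Next I would show that every $v\in U$ contributes many non-edges. Fix $v\in U\cap V_i$ and let $i'=3-i$. Since $d_{G[V_1,V_2]}(v)\le d_G(v)\le \bigl(\frac12-4\eta\bigr)n$ and $|V_{i'}|\ge \frac{n}{2}-\eta n$, the number of non-neighbors of $v$ in $V_{i'}$ is at least
\begin{equation*}
|V_{i'}|-d_{G[V_1,V_2]}(v)\ge \Big(\tfrac{n}{2}-\eta n\Big)-\Big(\tfrac12-4\eta\Big)n=3\eta n.
\end{equation*}
Summing over $v\in U$ and noting that each non-edge in $G[V_1,V_2]$ is counted at most twice (once per endpoint),
\begin{equation*}
2\bar{e}\ge \sum_{v\in U}\bigl(|V_{i'(v)}|-d_{G[V_1,V_2]}(v)\bigr)\ge 3\eta n\,|U|.
\end{equation*}

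Combining the two estimates gives $\frac{3}{2}\eta n\,|U|\le \bar{e}\le \eta^2 n^2$, whence $|U|\le \frac{2}{3}\eta n<\eta n$, as required. No step here is a real obstacle; the only subtlety is remembering that the partition $V_1\cup V_2$ is chosen in Lemma~\ref{lemma4.2} to maximize $e(V_1,V_2)$, which together with the edge lower bound from Lemma~\ref{lemma4.1} is precisely what is needed to obtain the strong upper bound $\bar{e}\le \eta^2 n^2$ that dominates the contribution of $U$.
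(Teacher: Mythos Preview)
Your argument is correct and genuinely different from the paper's. The paper proceeds by contradiction: assuming $|U|>\eta n$, it deletes a set $U'\subseteq U$ of size $\lfloor\eta n\rfloor$ and shows that $e(G-U')$ exceeds $e(K_{t-1}+T_{n'-t+1,2})$, whence Theorem~\ref{theorem1.1} forces $t$ disjoint $(2\ell+1)$-cycles in $G-U'$, contradicting $tC_{2\ell+1}$-freeness. Your route is a direct double count of the missing bipartite edges, using only the size and balance estimates from Lemma~\ref{lemma4.2}; it never invokes Theorem~\ref{theorem1.1} or the forbidden-subgraph hypothesis. This makes your proof more self-contained and even yields the slightly sharper bound $|U|\le\tfrac{2}{3}\eta n$. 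On the other hand, the paper's deletion-plus-extremal approach is more robust when the near-complete-bipartite structure is not given so explicitly, and it illustrates how the Tur\'an result feeds back into the spectral argument. Both proofs rest on the same structural input (Lemma~\ref{lemma4.2}); yours just exploits it via counting rather than via an extremal embedding.
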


\begin{proof}
Suppose to the contrary that $|U|>\eta n$,
then there exists $U'\subseteq U$ with $|U'|=\lfloor\eta n\rfloor$.
Moreover, by Lemma \ref{lemma4.2},
we have $e(G)\ge \big(\frac{1}{4}-\frac{1}{2}\eta^2\big)n^2$.
Now set $n'=|G-U'|=n-\lfloor\eta n\rfloor$.
Then $n'-1<\big(1-\eta)n$. Thus,
\begin{eqnarray*}
e(G-U')&\ge& e(G)-\sum_{v\in U'}d_G(v)\nonumber\\
  &\ge&  \Big(\frac{1}{4}-\frac{\eta^2}{2}\Big)n^2-\eta n\Big(\frac{1}{2}-4\eta\Big)n\nonumber\\
  &=& \frac14\big(1-2\eta+14\eta^2\big)n^2\nonumber\\
  &>& \frac14\big(n'-1+t\big)^2
\end{eqnarray*}
for sufficiently large $n$.
We can further check that $\frac14(n'+t-1)^2>e(K_{t-1}+T_{n'-t+1,2}).$
Hence, $e(G-U')>e(K_{t-1}+T_{n'-t+1,2})$. By Theorem \ref{theorem1.1},
$G-U'$ contains $t$ vertex-disjoint $(2\ell+1)$-cycles,
contradicting the fact that $G$ is $tC_{2\ell+1}$-free.
\end{proof}

\begin{lem}\label{lemma4.4}
Let $W=W_1\cup W_2$, where $W_i=\{v\in V_i~|~d_{V_i}(v)\ge 2\eta n\}$
and $d_{V_i}(v)=|N_G(v)\cap V_i|$ for $i\in\{1,2\}$.
Then we have $|W|\le\frac{1}{2}\eta n$.
\end{lem}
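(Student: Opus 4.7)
The plan is to bound $|W|$ by counting edges inside each part $V_i$ in two ways, using the bound $e(V_1)+e(V_2) \le \frac{1}{2}\eta^2 n^2$ established in Lemma \ref{lemma4.2}. The idea is that since every vertex in $W_i$ has many neighbours inside $V_i$, the set $W_i$ forces a lot of edges to lie inside $V_i$; but Lemma \ref{lemma4.2} says there are very few such edges, which immediately caps $|W_i|$.

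More concretely, for each $i\in\{1,2\}$ I would argue:
\begin{equation*}
2e(V_i) \;=\; \sum_{v\in V_i} d_{V_i}(v) \;\ge\; \sum_{v\in W_i} d_{V_i}(v) \;\ge\; 2\eta n \cdot |W_i|,
\end{equation*}
where the last inequality is by the defining property of $W_i$. Therefore $|W_i| \le e(V_i)/(\eta n)$. Summing over $i\in\{1,2\}$ and invoking Lemma \ref{lemma4.2} yields
\begin{equation*}
|W| \;=\; |W_1|+|W_2| \;\le\; \frac{e(V_1)+e(V_2)}{\eta n} \;\le\; \frac{\tfrac{1}{2}\eta^2 n^2}{\eta n} \;=\; \frac{\eta n}{2},
\end{equation*}
which is the required bound.

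There is essentially no obstacle here: the lemma is a direct, almost mechanical consequence of the edge-count bound from Lemma \ref{lemma4.2} combined with the definition of $W_i$, so no contradiction argument, no Tur\'an-type input, and no spectral input is required. The only thing to be mildly careful about is keeping the inequality $\sum_{v\in V_i} d_{V_i}(v)=2e(V_i)$ straight (so as not to double-count edges in $W_i$), but the one-sided inequality $\sum_{v\in W_i} d_{V_i}(v) \le 2e(V_i)$ used above is enough and avoids that subtlety entirely. This clean bound on $|W|$ is exactly what one would expect to feed into the next stage of the argument, where vertices outside $U\cup W$ are shown to behave almost like vertices of the extremal graph $G^*$.
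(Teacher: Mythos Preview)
Your proof is correct and essentially identical to the paper's: both use $2e(V_i)=\sum_{v\in V_i}d_{V_i}(v)\ge |W_i|\cdot 2\eta n$ and then apply the bound $e(V_1)+e(V_2)\le\frac12\eta^2 n^2$ from Lemma~\ref{lemma4.2}. The only cosmetic difference is that you divide through to get $|W_i|\le e(V_i)/(\eta n)$ before summing, whereas the paper sums first and divides afterward.
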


\begin{proof}
For $i\in \{1,2\}$,
$$2e(V_i)=\sum_{v\in V_i}d_{V_i}(v)\ge
\sum_{v\in W_i}d_{V_i}(v)\ge |W_i|\cdot 2\eta n.$$
Combining Lemma \ref{lemma4.2}, we have
$$\frac12\eta^2 n^2\ge e(V_1)+e(V_2)\ge \big(|W_1|+|W_2|\big)\eta n=|W|\eta n.$$
Therefore, $|W|\le \frac12\eta n$.
\end{proof}

In the following three lemmas,
we focus on constructing $(2\ell+1)$-cycles
in distinct induced subgraphs of the spectral extremal graph $G$.

\begin{lem}\label{lemma4.5}
For arbitrary $R\subseteq V(G)$ with $|R|\le t(2\ell+1)$,
if there exists an edge within $V_i\setminus(U\cup W\cup R)$
for some $i\in \{1,2\}$, then
$G-(U\cup W\cup R)$ contains a $(2\ell+1)$-cycle.
\end{lem}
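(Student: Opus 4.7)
The plan is to find in $G-(U\cup W\cup R)$ a cycle of length $2\ell+1$ consisting of the given intra-part edge $uv$ in $V_i$ together with an alternating $(V_{3-i},V_i)$-path of length $2\ell$ from $u$ to $v$. Set $B=U\cup W\cup R$; by Lemmas \ref{lemma4.3} and \ref{lemma4.4} together with $|R|\le t(2\ell+1)$, we have $|B|\le \tfrac{3}{2}\eta n+t(2\ell+1)\le 2\eta n$ for $n$ large. The key estimate I will reuse throughout is that every vertex $w\in V_j\setminus B$ has many neighbors in the opposite part: since $w\notin U$ gives $d_G(w)\ge(\tfrac12-4\eta)n$ and $w\notin W$ gives $d_{V_j}(w)<2\eta n$, we get $d_{V_{3-j}}(w)>(\tfrac12-6\eta)n$.

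Writing $x_0=u$ and $x_{2\ell}=v$, I construct $x_1,\ldots,x_{2\ell-1}$ greedily, requiring $x_k\in V_{3-i}\setminus B$ for odd $k$ and $x_k\in V_i\setminus B$ for even $k$, with consecutive vertices adjacent and all vertices pairwise distinct (and distinct from $u,v$). For each $k\in\{1,\ldots,2\ell-2\}$ the previous vertex $x_{k-1}\notin B$ has more than $(\tfrac12-6\eta)n$ neighbors in the required part, and after excluding at most $|B|+k+1\le 2\eta n+2\ell$ forbidden vertices the number of valid candidates is at least $(\tfrac12-8\eta)n-2\ell-1$, which is positive for $n$ large. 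For the closing vertex $x_{2\ell-1}\in V_{3-i}$ I need a common neighbor of $x_{2\ell-2}$ and $v$ in $V_{3-i}\setminus B$ avoiding the path built so far. Using $|V_{3-i}|\le\tfrac n2+\eta n$ from Lemma \ref{lemma4.2} together with inclusion-exclusion inside $V_{3-i}$, the vertices $x_{2\ell-2}$ and $v$ share at least $2(\tfrac12-6\eta)n-(\tfrac n2+\eta n)=(\tfrac12-13\eta)n$ common neighbors there; subtracting $|B|+2\ell-1$ still leaves at least $(\tfrac12-15\eta)n-2\ell>0$. Any such choice of $x_{2\ell-1}$ closes the desired $(2\ell+1)$-cycle $u x_1 x_2\cdots x_{2\ell-1} v u$ inside $G-B$.

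The only real obstacle is juggling the three sources of forbidden vertices at each step: the exceptional set $B$, whose size is controlled linearly in $n$ by Lemmas \ref{lemma4.3}--\ref{lemma4.4}; the vertices already placed on the path, bounded by the fixed constant $2\ell$; and, in the final step, the need to land on a neighbor of the prescribed endpoint $v$. The first is swallowed by the choice $\eta<\tfrac1{75}$, the second is negligible because $\ell$ is fixed, and the third is exactly what the common-neighbor estimate takes care of, since it uses in an essential way the compound ``outside $U$ and outside $W$'' lower bound on cross-degree. Note that the extremality of $G$ is not invoked here at all; the lemma is a purely structural consequence of Lemmas \ref{lemma4.2}--\ref{lemma4.4}.
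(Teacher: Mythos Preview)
Your proof is correct, but it takes a genuinely different route from the paper's. The paper first finds a common neighbor $w_0\in V_{\widehat{i}}'$ of the two endpoints $u_0,v_0$ to form a triangle $H_0=u_0v_0w_0$, then grows three disjoint paths of length $\ell-1$ from the three vertices of $H_0$ (exactly as in Lemma~\ref{lemma3.1}), and finally uses a pigeonhole argument---the three path-endpoints together see more than $|V'|$ vertices---to find a vertex adjacent to two of them, closing a $(2\ell+1)$-cycle. You instead exploit the near-bipartite structure of $G-(U\cup W\cup R)$ directly: starting from $u$ you greedily build a single alternating $(V_i,V_{3-i})$-path of length $2\ell-1$, and only at the last step do you invoke a common-neighbor count (for $x_{2\ell-2}$ and $v$ inside $V_{3-i}$) to close the cycle with the given edge $uv$.

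Both arguments rest on the same cross-degree estimate $d_{V_{3-j}}(w)>(\tfrac12-6\eta)n$ for $w\in V_j\setminus(U\cup W)$, so neither is stronger in content. Your path-building is shorter and more transparent here because the alternating structure does the parity bookkeeping automatically; the paper's triangle-plus-three-paths device is a deliberate reuse of the machinery from Lemma~\ref{lemma3.1}, which was designed for the dense (non-bipartite) setting and is slightly heavier than necessary in this bipartite-like situation. One minor remark: in your greedy step you could note that only the forbidden vertices lying in the target part $V_{3-j}$ actually matter, but your cruder subtraction of all of $|B|+k+1$ is of course sufficient.
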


\begin{proof}
Let $V'=V_1'\cup V_2'$, where $V_i'=V_i\setminus (U\cup W\cup R)$
for $i\in \{1,2\}$.
Moreover, we may assume that $\widehat{i}\in\{1,2\}\setminus \{i\}$.
We first claim that for each vertex $u\in V_i'$,
\begin{align}\label{align3}
 |N_{V'}(u)|\geq |N_{V_{\widehat{i}}'}(u)|>\frac{2}{5}n,
\end{align}
where $N_{V'}(u)=N_G(u)\cap V'$.
Since $u\notin U\cup W$, we know that $d_{V_{i}}(u)<2\eta n$
and $d_G(u)>(\frac{1}{2}-4\eta)n$.
Recall that $V_1\cup V_2$ is a bipartition of $V(G)$.
Thus $d_{V_{\widehat{i}}}(u)=d_G(u)-d_{V_i}(u)>(\frac{1}{2}-6\eta)n$.
Combining Lemmas \ref{lemma4.3} and \ref{lemma4.4} gives
$$|N_{V_{\widehat{i}}'}(u)|
\geq|N_{V_{\widehat{i}}}(u)|-\big(|U|+|W|+|R|\big)
>\Big(\frac{1}{2}-6\eta\Big)n-\frac32\eta n-t(2\ell+1)>
\frac{2}{5}n,$$
as the constant $\eta<\frac1{75}$ and $n$ is sufficiently large.
Thus, \eqref{align3} follows.

Now let $u_0v_0$ be an arbitrary edge within $V_i'$.
From \eqref{align3} we know that both $|N_{V_{\widehat{i}}'}(u_0)|>\frac{2}{5}n$ and $|N_{V_{\widehat{i}}'}(v_0)|>\frac{2}{5}n$.
Moreover, by Lemma \ref{lemma4.2},
$|V_{\widehat{i}}'|\leq|V_{\widehat{i}}|\leq\frac n2+\eta n$.
Thus,
$$\big|N_{V_{\widehat{i}}'}(u_{0})\cap N_{V_{\widehat{i}}'}(v_{0})\big|\geq \big|N_{V_{\widehat{i}}'}(u_{0})\big|+\big|N_{V_{\widehat{i}}'}(v_{0})\big|
-\big|V_{\widehat{i}}'\big|>\frac{3}{10}n-\eta n>0,$$
and hence there exists a vertex $w_0\in N_{V_{\widehat{i}}'}(u_{0})\cap N_{V_{\widehat{i}}'}(v_{0})$.
Since $w_0\in V_{\widehat{i}}'$,
it follows from \eqref{align3} that $|N_{V'}(w_{0})|\geq|N_{V_{i}'}(u)|>\frac25n$.

Let $H_0=G[\{u_0,v_0,w_0\}]$.
Then $H_0\cong C_3$ and $H_0\subseteq G-(U\cup W\cup R)$.
If $\ell=1$, then $H_0$ is a desired $(2\ell+1)$-cycle.
Assume now that $\ell\ge 2$.
Since $|N_{V'}(u)|>\frac25n$ for each $u\in V(H_0)$,
there exist $u_1,v_1,w_1\in V'$ such that
$u_1\in N_{V'}(u_0)\setminus V(H_0)$, $v_1\in N_{V'}(v_0)\setminus (V(H_0)\cup \{u_1\})$
and $w_1\in N_{V'}(w_0)\setminus (V(H_0)\cup \{u_1,v_1\})$.
Then, we define a subgraph $H_1\subseteq G$ with $V(H_1)=V(H_0)\cup\{u_1,v_1,w_1\}$
and $E(H_1)=E(H_0)\cup\{u_0u_1,v_0v_1,w_0w_1\}$.
Similarly, there exist $u_2,v_2,w_2$ such that
$u_2\in N_{V'}(u_1)\setminus V(H_1)$,
$v_2\in N_{V'}(v_1)\setminus (V(H_1)\cup \{u_2\})$ and $w_2\in N_{V'}(w_1)\setminus (V(H_1)\cup \{u_2,v_2\}).$
Repeating the above steps,
we obtain a sequence of subgraphs $H_0,H_1,\cdots,H_{\ell-1}$ such that
$V(H_j)=V(H_{j-1})\cup\{u_{j},v_{j},w_{j}\}$ and $E(H_j)=E(H_{j-1})\cup\{u_{j-1}u_j,v_{j-1}v_j,w_{j-1}w_j\}$
for $1\le j\le \ell-1$.
Then, $|H_{\ell-1}|=3\ell$
and $H_{\ell-1}\subseteq G-(U\cup W\cup R)$.
Set $V''=V'\setminus V(H_{\ell-1})$.
For each $u\in \{u_{\ell-1},v_{\ell-1},w_{\ell-1}\}$,
we have
$|N_{V''}(u)|\geq|N_{V'}(u)|-|H_{\ell-1}|+1>\frac{2}{5}n-3\ell+1,$
and thus $$|N_{V''}(u_{\ell-1})|+|N_{V''}(v_{\ell-1})|
            +|N_{V''}(w_{\ell-1})|>n> |V''|$$
for $n$ sufficiently large.
This implies that
there exists $w\in V''$ such that
$w$ is adjacent to at least two vertices, say $u_{\ell-1}$ and $v_{\ell-1}$, of $\{u_{\ell-1},v_{\ell-1},w_{\ell-1}\}$.
Therefore, $G-(U\cup W\cup R)$ contains a $(2\ell+1)$-cycle
$u_0\ldots u_{\ell-1}wv_{\ell-1}\ldots v_0u_0$.
The proof is completed.
\end{proof}

\begin{lem}\label{lemma4.6}
For arbitrary $R\subseteq V(G)$ with $|R|\le t(2\ell+1)$,
if there exists a vertex $u_0\in W\setminus U$, then
$G-\big((U\cup W\cup R)\setminus\{u_0\}\big)$ contains a $(2\ell+1)$-cycle.
\end{lem}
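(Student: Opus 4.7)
The plan is to mimic the triangle-and-extension construction from the proof of Lemma \ref{lemma4.5}, but now using the vertex $u_0\in W_i\setminus U$ (for some $i\in\{1,2\}$) in place of a regular $V_i'$-vertex. The main obstacle is that, since $u_0\in W$, the key estimate \eqref{align3} does not apply to $u_0$ directly, so we have no a priori control on $|N_{V_{\widehat{i}}'}(u_0)|$; a substitute bound must be proved first.

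First, I would use the max-cut property of the partition $(V_1,V_2)$ guaranteed by Lemma \ref{lemma4.2}: every $v\in V_i$ must satisfy $d_{V_{\widehat{i}}}(v)\ge d_{V_i}(v)$, since otherwise moving $v$ to the other side would strictly increase $e(V_1,V_2)$. Applied to $u_0\notin U$, this gives $d_{V_{\widehat{i}}}(u_0)\ge \tfrac12 d_G(u_0)>\bigl(\tfrac14-2\eta\bigr)n$. Subtracting the at most $|U|+|W|+|R|\le \tfrac32\eta n+t(2\ell+1)$ forbidden vertices, we obtain $|N_{V_{\widehat{i}}'}(u_0)|>\tfrac15 n$ for $n$ large. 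Similarly, $u_0\in W_i$ gives $d_{V_i}(u_0)\ge 2\eta n$, hence $|N_{V_i'}(u_0)|>0$, so we can pick a vertex $v_0\in N_{V_i'}(u_0)$.

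Next, since $v_0\in V_i'$, estimate \eqref{align3} gives $|N_{V_{\widehat{i}}'}(v_0)|>\tfrac25 n$. Combining with the previous bound and $|V_{\widehat{i}}'|\le\bigl(\tfrac12+\eta\bigr)n$ yields
\[
\bigl|N_{V_{\widehat{i}}'}(u_0)\cap N_{V_{\widehat{i}}'}(v_0)\bigr|> \tfrac15 n+\tfrac25 n-\bigl(\tfrac12+\eta\bigr)n=\bigl(\tfrac{1}{10}-\eta\bigr)n>0,
\]
which lets me select $w_0$ in the intersection and form the triangle $H_0=u_0v_0w_0$ inside $G-((U\cup W\cup R)\setminus\{u_0\})$. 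This settles the case $\ell=1$. For $\ell\ge 2$ I would then run the iterative extension of Lemma \ref{lemma4.5} verbatim, producing $H_1,\dots,H_{\ell-1}$ by picking new neighbors $u_j,v_j,w_j\in V'$ of $u_{j-1},v_{j-1},w_{j-1}$. The only step that differs from Lemma \ref{lemma4.5} is the very first extension from $u_0$, for which $|N_{V'}(u_0)|\ge|N_{V_{\widehat{i}}'}(u_0)|>\tfrac15 n$ is amply sufficient to avoid $V(H_0)$; all subsequent extension base-points lie in $V'$, so they inherit the stronger bound $>\tfrac25 n$ via \eqref{align3}. The pigeonhole closing step at the end is identical to Lemma \ref{lemma4.5} since the endpoints $u_{\ell-1},v_{\ell-1},w_{\ell-1}$ lie in $V'$.

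The main obstacle, as already indicated, is producing the lower bound $|N_{V_{\widehat{i}}'}(u_0)|>\tfrac15 n$. Without the max-cut property of $(V_1,V_2)$, a $W_i$-vertex could in principle have essentially all of its neighbors inside $V_i$, leaving no room to locate the triangle-completing vertex $w_0$ or to initiate the extension on the $u$-side; the extremality of the partition is precisely what rules this out.
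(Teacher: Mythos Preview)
Your proposal is correct and follows essentially the same route as the paper: use the max-cut property of the partition to get $d_{V_{\widehat{i}}}(u_0)\ge\tfrac12 d_G(u_0)>(\tfrac14-2\eta)n$, pick $v_0\in N_{V_i'}(u_0)$ via $d_{V_i}(u_0)\ge 2\eta n$, locate a common neighbour $w_0\in V_{\widehat{i}}'$ to form the triangle, and then extend exactly as in Lemma~\ref{lemma4.5}. The paper performs the intersection in $V_{\widehat{i}}$ first and subtracts $|U|+|W|+|R|$ afterwards, whereas you subtract first and then intersect, but this is only a cosmetic reordering; your extra care in noting that the first extension step from $u_0$ uses the weaker bound $>\tfrac15 n$ while all later steps revert to \eqref{align3} is exactly the point the paper sweeps under ``the same method as in the proof of Lemma~\ref{lemma4.5}''.
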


\begin{proof}
Since $V(G)=V_1\cup V_2$,
we may assume without loss of generality that $u_0\in V_1$.
Then by the definitions of $U$ and $W$, we have $$d_G(u_0)>\Big(\frac{1}{2}-4\eta\Big)n
~~~ \mbox{and} ~~~ d_{V_1}(u_0)\ge 2\eta n.$$
Moreover, by Lemmas \ref{lemma4.3} and \ref{lemma4.4},
$|U|\le\eta n$ and $|W|\le \frac12 \eta n$. Thus,
$$\big|N_{V_1\setminus(U\cup W\cup R)}(u_0)\big|\ge d_{V_1}(u_0)-(|U|+|W|+|R|)\ge\frac12\eta n-t(2\ell+1)>0.$$
Then, there exists a vertex $v_0$ in $N_{V_1}(u_0)\setminus(U\cup W\cup R)$.
Again by the definitions of $U$ and $W$, we can see that
$d_G(v_0)>(\frac{1}{2}-4\eta)n$ and $d_{V_{1}}(v_0)<2\eta n$.
It follows that
\begin{align}\label{align4}
d_{V_{2}}(v_0)=d_G(v_0)-d_{V_1}(v_0)>\Big(\frac{1}{2}-6\eta\Big)n.
\end{align}
Recall that $V(G)=V_1\cup V_2$ is a bipartition of $V(G)$ such that $e(V_1,V_2)$ attains the maximum. Hence,
$d_{V_1}(u_0)\le \frac{1}{2}d_G(u_0)$.
Since $d_G(u_0)>(\frac{1}{2}-4\eta)n$, we get that
\begin{align}\label{align5}
 d_{V_2}(u_0)=d_G(u_0)-d_{V_1}(u_0)\ge \frac{1}{2}d_G(u_0)>
 \Big(\frac{1}{4}-2\eta\Big)n.
\end{align}
Furthermore, Lemma \ref{lemma4.2} gives $|V_2|\leq\frac n2+\eta n.$
Combining with \eqref{align4} and \eqref{align5}, we obtain
$$\left|N_{V_2}(u_0)\cap N_{V_2}(v_0)\right|\ge|N_{V_2}(u_0)|+|N_{V_2}(v_0)|-|V_2|\ge \Big(\frac{1}{4}-9\eta\Big)n.$$
Note that $\eta<\frac1{75}$ and $n$ is sufficiently large.
It follows that
$$\Big|\big(N_{V_2}(u_0)\cap N_{V_2}(v_0)\big)\setminus (U\cup W\cup R)\Big|
\ge\Big(\frac{1}{4}-9\eta\Big)n-\frac{3}{2}\eta n-t(2\ell+1)>0.$$
Hence, there exists
$w_0\in \big(N_{V_2}(u_0)\cap N_{V_2}(v_0)\big)\setminus (U\cup W\cup R)$.
Let $H_0=G[\{u_0,v_0,w_0\}]$.
Then $H_0\cong C_3$ and $H_0\subseteq G-\big((U\cup W\cup R)\setminus\{u_0\}\big)$.
For $\ell=1$, $H_0$ is a $(2\ell+1)$-cycle.
For $\ell\ge 2$,
using the same method as in the proof of Lemma \ref{lemma4.5},
we can find a $(2\ell+1)$-cycle in $G-\big((U\cup W\cup R)\setminus\{u_0\}\big)$.
\end{proof}

\begin{lem}\label{lemma4.7}
Let $\nu=\sum_{i=1}^2\nu\big(G[V_i\setminus (U\cup W)]\big)$.
Then $\nu\leq t-1$.
Moreover, $G-(U\cup W)$ contains at least $\nu$ disjoint $(2\ell+1)$-cycles.
\end{lem}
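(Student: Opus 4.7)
The plan is to establish both conclusions simultaneously through a single inductive construction: for every integer $k$ with $1\le k\le\min\{\nu,t\}$, I will build $k$ pairwise vertex-disjoint $(2\ell+1)$-cycles $C_1,\dots,C_k$ in $G-(U\cup W)$. Conclusion (i), $\nu\le t-1$, then follows by taking $k=t$ when $\nu\ge t$ and contradicting the $tC_{2\ell+1}$-freeness of $G$; conclusion (ii) follows by taking $k=\nu$.

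For the setup I would first fix maximum matchings $M_i$ in $G[V_i\setminus(U\cup W)]$ for $i\in\{1,2\}$, so that $|M_1|+|M_2|=\nu$, and enumerate the edges of $M_1\cup M_2$ as $e_j=u_jv_j$ for $1\le j\le\nu$; note that the $2\nu$ endpoints $u_j,v_j$ are all distinct. At step $j$ of the induction I set
$$R_j=\bigcup_{i<j}V(C_i)\cup\{u_m,v_m:j<m\le k\},$$
which has cardinality at most $(j-1)(2\ell+1)+2(k-j)\le(k-1)(2\ell+1)\le t(2\ell+1)$, so Lemma~\ref{lemma4.5} is applicable with this $R$. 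The edge witnessing the hypothesis of Lemma~\ref{lemma4.5} is $e_j$ itself: $e_j$ lies in some $V_i\setminus(U\cup W)$ by the choice of matching; $u_j,v_j$ avoid $\{u_m,v_m:m>j\}$ by disjointness of matching endpoints; and $u_j,v_j\notin V(C_i)$ for $i<j$ because at each earlier step $i$ the set $R_i$ contained $\{u_j,v_j\}$, so $C_i\subseteq G-(U\cup W\cup R_i)$ necessarily avoided them. The cycle $C_j$ produced by Lemma~\ref{lemma4.5} is then disjoint from $C_1,\dots,C_{j-1}$ and from $\{u_m,v_m:m>j\}$, which preserves the edges $e_{j+1},\dots,e_k$ for the remaining steps and closes the induction.

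The conceptual crux is the choice of $R_j$: including the \emph{future} matching endpoints prevents the currently chosen cycle from consuming vertices we need later, which would otherwise be the main obstacle (a naive greedy approach destroys matching edges faster than it produces cycles and stalls well short of $\nu$). The budget $|R|\le t(2\ell+1)$ built into Lemma~\ref{lemma4.5} is generous enough to accommodate both the $(j-1)(2\ell+1)$ vertices of previously built cycles and the $2(k-j)$ reserved matching endpoints throughout the entire induction, so no further structural input about $G$ is needed beyond what Lemma~\ref{lemma4.5} already provides.
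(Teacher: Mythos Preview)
Your proposal is correct and follows essentially the same approach as the paper's own proof: fix a matching of size $\min\{\nu,t\}$, and at each step apply Lemma~\ref{lemma4.5} with $R$ consisting of the previously constructed cycles together with the reserved endpoints of the future matching edges. The paper uses the same reservation trick (with slightly different indexing, writing the $j$th matching edge as $u_{2j-1}u_{2j}$ rather than $u_jv_j$), and your cardinality bound $|R_j|\le(k-1)(2\ell+1)$ matches theirs.
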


\begin{proof}
The case $\nu=0$ is trivial. Now assume that $\nu\ge1$, and let
$u_1u_2,\ldots,u_{2\nu-1}u_{2\nu}$ be $\nu$ independent edges in $G[V_1\setminus(U\cup W)]\cup G[V_2\setminus(U\cup W)]$.
Then, we set $R_0=\{u_j~|~j=1,2,\ldots,2\lambda\}$
and $R_1=R_0\setminus \{u_1,u_2\}$, where $\lambda=\min\{\nu,t\}$.
Since $u_1u_2$ is an edge within $V_i\setminus (U\cup W\cup R_1)$ for some $i\in\{1,2\}$,
Lemma \ref{lemma4.5} indicates that
$G-(U\cup W\cup R_1)$ contains a $(2\ell+1)$-cycle $C^1$.
Let $R_2=\big(R_1\setminus \{u_3,u_4\}\big)\cup V(C^1)$.
Again by Lemma \ref{lemma4.5}, $G-(U\cup W\cup R_2)$ contains a $(2\ell+1)$-cycle $C^2$,
as $u_3u_4$ is an edge within $V_i\setminus (U\cup W\cup R_2)$ for some $i\in\{1,2\}$.

Repeating the above steps, we obtain a sequence of vertex subsets
$R_1,\cdots,R_\lambda$ such that
$R_j=\big(R_{j-1}\setminus\{u_{2j-1},u_{2j}\}\big)\cup\big(\cup_{k=1}^{j-1}V(C^k)\big)$
and $G-(U\cup W\cup R_j)$ contains a $(2\ell+1)$-cycle $C^j$
for each $j\in \{2,\dots,\lambda\}$.
Clearly, $|R_j|\leq(\lambda-1)(2\ell+1)$ for $1\leq j\leq\lambda$;
moreover, $C^1,C^2,\dots,C^\lambda$ are disjoint cycles in $G-(U\cup W)$.
Since $G$ is $tC_{2\ell+1}$-free, we have $\lambda\le t-1$.
Combining $\lambda=\min\{\nu,t\}$ gives $\nu=\lambda\leq t-1$, and thus $C^1,C^2,\dots,C^\nu$ are
disjoint $(2\ell+1)$-cycles in $G-(U\cup W)$.
\end{proof}

In the following two lemmas,
we shall give two local structural properties of $G$.

\begin{lem}\label{lemma4.8}
For $i\in \{1,2\}$,
we have $\Delta\big(G[V_i\setminus(U\cup W)]\big)<t(2\ell+1)$.
\end{lem}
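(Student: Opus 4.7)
I would argue by contradiction: suppose some $v\in V_i\setminus(U\cup W)$, say $i=1$, has $|A|:=|N_G(v)\cap(V_1\setminus(U\cup W))|\geq t(2\ell+1)$, and derive a contradiction by constructing $t$ pairwise vertex-disjoint $(2\ell+1)$-cycles in $G$. First, Lemma~\ref{lemma4.7} supplies a maximum matching $M$ in $G[V_1\setminus(U\cup W)]\cup G[V_2\setminus(U\cup W)]$ of size $\nu\leq t-1$, together with $\nu$ disjoint $(2\ell+1)$-cycles $C^1,\ldots,C^\nu$ in $G-(U\cup W)$. Since $|A|>2(t-1)\geq|V(M)|$, pigeonhole yields some $u\in A\setminus V(M)$; if $v\notin V(M)$, then $M\cup\{vu\}$ is a strictly larger matching, contradicting the maximality of $M$. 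Hence $v\in V(M)$, with matching edge $vv'\in M$, and $v$ necessarily lies in some cycle $C^{j_0}$ from the Lemma~\ref{lemma4.7} construction.

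The key step is to extract one more disjoint cycle $C^{\nu+1}$ using the abundant star at $v$. Writing $R=\bigcup_{k=1}^\nu V(C^k)$, the bound $|A|\geq t(2\ell+1)$ forces $|A\setminus R|\geq(t-\nu)(2\ell+1)\geq 2\ell+1>0$. I would then perform the swap $M':=(M\setminus\{vv'\})\cup\{vu\}$ for some $u\in A\setminus(V(M)\cup R)$ and rebuild $C^{j_0}$ via Lemma~\ref{lemma4.5} applied to the new seed edge $vu$, so that the rebuilt $C^{j_0}$ contains $u$ in place of $v'$ and thereby frees $v'$. If $v'$ admits any intra-$V_1$ neighbor outside the updated cycle union, adjoining it produces a matching of size $\nu+1$, contradicting the maximality of $M$; otherwise, Lemma~\ref{lemma4.5} applied to a fresh seed $vu'$ with $u'\in A\setminus R$ and $R$ as the union of the other $\nu-1$ rebuilt cycles yields an additional disjoint cycle. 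When $\nu=t-1$ this already produces $t$ disjoint $(2\ell+1)$-cycles, giving the desired contradiction with $tC_{2\ell+1}$-freeness.

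The main obstacle is the case $\nu<t-1$, where one round of swap-and-rebuild only gains a single extra cycle and is thus insufficient. My plan is to iterate the procedure: each round either strictly augments $M$ (boosting $\nu$ by one, which after finitely many iterations reaches $\nu=t$ and contradicts Lemma~\ref{lemma4.7}), or, in the ``trapped'' subcase where the freed vertex has all its intra-part neighbors inside the current cycle union, produces another disjoint cycle through an alternating path emanating from the still-abundant star at $v$ via one more application of Lemma~\ref{lemma4.5}. The constant $t(2\ell+1)$ on $|A|$ is calibrated precisely so that $v$'s star retains at least $2\ell+1$ unused leaves outside all cycles built so far through each of the $t$ rounds (each consuming at most $2\ell+1$ leaves), supplying the requisite fresh seed edges. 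The most delicate technical point, and where I would invest most effort, is the bookkeeping that certifies strict progress at every iteration — ensuring that no round simply permutes structures without either augmenting $M$ or producing a genuinely new disjoint cycle — so that after at most $t-\nu$ rounds one reaches the promised $t$ pairwise disjoint $(2\ell+1)$-cycles in $G$.
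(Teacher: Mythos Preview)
Your approach differs fundamentally from the paper's and contains a genuine gap. The paper exploits the \emph{spectral extremality} of $G$: since $u_0\notin W$ has $d_{V_1}(u_0)<2\eta n<|V_1\setminus(U\cup W)|$, there exist non-edges from $u_0$ into $V_1\setminus(U\cup W)$; adding all of them yields $G'$ with $\rho(G')>\rho(G)$, so $G'$ must contain a copy $H$ of $tC_{2\ell+1}$. Only the cycle $C\subseteq H$ through $u_0$ can use new edges, hence $H':=H-V(C)\cong(t-1)C_{2\ell+1}$ already lies in $G$. Because the star at $u_0$ has at least $t(2\ell+1)>|H'|$ leaves, some leaf $v_0$ avoids $V(H')$, and one application of Lemma~\ref{lemma4.5} with $R=V(H')$ produces a $(2\ell+1)$-cycle $C'$ in $G$ disjoint from $H'$; then $C'\cup H'$ is the forbidden $tC_{2\ell+1}$.

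You never invoke extremality and instead try to manufacture $t$ disjoint cycles directly from the star at $v$ together with Lemma~\ref{lemma4.7}. The fatal problem is your ``otherwise'' branch. After the swap $M'=(M\setminus\{vv'\})\cup\{vu\}$ and the rebuild of $C^{j_0}$ into $\widetilde C^{j_0}$, the vertex $v$ still lies on $\widetilde C^{j_0}$ (the cycle from Lemma~\ref{lemma4.5} always contains its seed edge). Any ``fresh seed $vu'$'' therefore produces a cycle $C^*$ that again contains $v$, so $C^*$ and $\widetilde C^{j_0}$ intersect and you have not gained a $(\nu+1)$-st disjoint cycle---you have merely replaced one $v$-cycle by another. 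The same obstruction recurs at every round: all of your seed edges emanate from the single hub $v$, so every cycle built from them contains $v$, and at most one such cycle can sit in any disjoint collection. Your iteration thus cannot push the count beyond $\nu$ in this subcase, and the ``strict progress'' you hope to certify does not exist. (Your ``if'' branch is also imprecise: a neighbor of $v'$ outside the updated cycle union need not lie outside $V(M')$, so it does not automatically augment the matching.) The paper sidesteps all of this by letting extremality furnish $t-1$ disjoint cycles in $G$ for free, after which a single seed from the abundant star suffices.
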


\begin{proof}
Our proof is by contradiction.
Without loss of generality,
suppose that there exists a vertex $u_0\in V_1\setminus (U\cup W)$
such that $d_{V_1\setminus (U\cup W)}(u_0)\ge t(2\ell+1)$.
Since $u_0\notin W$, we get $d_{V_1}(u_0)<2\eta n$ by the definition of $W$.
On the other hand, by Lemma \ref{lemma4.2},
$|V_1|\geq\frac n2-\eta n$,
and so
$$|V_1\setminus (U\cup W)|\ge |V_1|-|U|-|W|
\ge \Big(\frac{1}{2}-\frac52\eta\Big)n.$$
Hence, $|V_1\setminus(U\cup W)|>d_{V_1}(u_0)$, as $\eta<\frac1{75}.$
This implies that there exist vertices in $V_1\setminus(U\cup W)$
which are not adjacent to $u_0$.
Let $G'$ be the graph obtained from $G$ by adding all possible edges from $u_0$ to
$V_1\setminus(U\cup W)$.
Then $\rho(G')>\rho(G)$.
Since $G$ is extremal with respect to $spex(n,tC_{2\ell+1})$,
$G'$ must contain a subgraph $H$ isomorphic to $tC_{2\ell+1}$.
From the construction of $G'$,
we can further see that $u_0\in V(C)$ for some $(2\ell+1)$-cycle $C$ in $H$.
Set $H'=H-V(C).$ Then $H'\subseteq G$.
Since $d_{V_1\setminus (U\cup W)}(u_0)\ge t(2\ell+1)$ while $|H'|=(t-1)(2\ell+1)$,
there exists a vertex $v_0$ with $v_0\in N_{V_1\setminus (U\cup W)}(u_0)$
and $v_0\notin V(H')$ in $G$.

Now setting $R=V(H')$ in Lemma \ref{lemma4.5},
and noticing that $u_0v_0$ is an edge within $V_1\setminus (U\cup W\cup R)$,
we obtain that $G-(U\cup W\cup R)$ contains a $(2\ell+1)$-cycle $C'$.
Clearly, $V(C')\cap V(H')=\varnothing$.
Therefore, $C'\cup H'$ is a copy of $tC_{2\ell+1}$ in $G$,
which contradicts the fact that $G$ is $tC_{2\ell+1}$-free.
\end{proof}

\begin{lem}\label{lemma4.9}
For $i\in \{1,2\}$, $G[V_i\setminus (U\cup W)]$ contains an independent set $I_i$ with $|I_i|>|V_i\setminus (U\cup W)|-2(t-1)t(2\ell+1)$.
\end{lem}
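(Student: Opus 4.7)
The plan is to combine the matching bound from Lemma~\ref{lemma4.7} with the degree bound from Lemma~\ref{lemma4.8} through a standard maximum-matching argument. Fix $i\in\{1,2\}$ and write $G_i=G[V_i\setminus (U\cup W)]$ for brevity. The two lemmas immediately give $\nu(G_i)\le \nu\le t-1$ and $\Delta(G_i)\le t(2\ell+1)-1$, so the ``bad'' subgraph $G_i$ is simultaneously globally sparse (few independent edges) and locally sparse (small maximum degree).

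Next, I will let $M_i$ be a maximum matching in $G_i$ and define
$I_i = V(G_i)\setminus N_{G_i}[V(M_i)]$, where $N_{G_i}[\,\cdot\,]$ denotes the closed neighborhood taken inside $G_i$. Since $M_i$ is maximum, any edge of $G_i$ joining two unmatched vertices would extend $M_i$, contradicting maximality; in particular, no two vertices of $I_i$ can be adjacent in $G_i$, so $I_i$ is independent in $G_i$ as required.

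For the size estimate I will use $|V(M_i)|=2\nu(G_i)\le 2(t-1)$ together with $|N_{G_i}[v]|\le 1+\Delta(G_i)\le t(2\ell+1)$ for each $v\in V(M_i)$. A union bound gives $|N_{G_i}[V(M_i)]|\le 2(t-1)\cdot t(2\ell+1)$, and hence $|I_i|\ge |V_i\setminus(U\cup W)|-2(t-1)t(2\ell+1)$. To upgrade this to a strict inequality as stated, I would split two cases: if $\nu(G_i)=0$ then $I_i=V(G_i)$ and the bound is trivial, while if $\nu(G_i)\ge 1$ then each matched edge $uv$ in $M_i$ places $u\in N_{G_i}[v]$ and $v\in N_{G_i}[u]$, producing at least one unit of overlap that improves the union bound by at least $2$, whence the inequality is strict. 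No real obstacle is anticipated: the argument is essentially bookkeeping around a maximum matching, and the only point needing a moment of care is the source of the strict inequality, which is handled by this case split.
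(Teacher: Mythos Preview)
Your proposal is correct and follows essentially the same route as the paper: both proofs take a maximum matching in $G_i$, remove the (open or closed) neighborhoods of the matched vertices, argue independence of the remainder via maximality of the matching, and bound the removed set using Lemma~\ref{lemma4.7} ($\nu\le t-1$) together with Lemma~\ref{lemma4.8} ($\Delta< t(2\ell+1)$). The only cosmetic difference is that the paper uses open neighborhoods and reads off the strict inequality directly from the strict degree bound $\Delta< t(2\ell+1)$, whereas you use closed neighborhoods and then recover strictness from the overlap between $N_{G_i}[u]$ and $N_{G_i}[v]$ for each matched edge $uv$; either bookkeeping works.
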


\begin{proof}
Assume that $\nu_i=\nu\big(G[V_i\setminus (U\cup W)]\big)$ for $i\in \{1,2\}$.
If $\nu_i=0$, then $V_i\setminus (U\cup W)$ is a desired independent set.
Now assume that $\nu_i\ge 1$, and let $u_1u_2,\ldots,u_{2\nu_i-1}u_{2\nu_i}$
be $\nu_i$ independent edges in $G[V_i\setminus (U\cup W)]$.
Let $$I_i=\big(V_{i}\setminus (U\cup W)\big)\setminus\big(\cup_{j=1}^{2\nu_i}N_{V_{i}\setminus (U\cup W)}(u_j)\big).$$
Then, every vertex in $I_i$ is not adjacent to any vertex in $\{u_1,u_2,\dots,u_{2\nu_i}\}$.
Now, if $G[I_i]$ contains an edge,
then $\nu\big(G[V_i\setminus (U\cup W)]\big)\geq \nu_i+1$,
a contradiction.
Therefore, $I_i$ is an independent set.

From Lemma \ref{lemma4.8} we know that
$\Delta\big(G[V_i\setminus (U\cup W)]\big)<t(2\ell+1)$.
Moreover, $\nu_i\leq\nu\le t-1$ by Lemma \ref{lemma4.7}.
Thus, we can see that
$$|V_i\setminus (U\cup W)|-|I_i|=|\cup_{j=1}^{2\nu_i}N_{V_i\setminus (U\cup W)}(u_j)|\leq2\nu_i\Delta\big(G[V_i\setminus (U\cup W)]\big)
<2(t-1)t(2\ell+1).$$
The result follows.
\end{proof}

In the following three lemmas,
we will give exact characterizations of $U$ and $W$.
Since $|W|\leq\frac12\eta n<n$ by Lemma \ref{lemma4.4},
we may choose a vertex $v^*$ such that
$x_{v^*}=\max\{x_v~|~v\in V(G)\setminus W\}$.
We will see that $v^*\notin U$.
Recall that $x_{u^*}=\max\{x_v~|~v\in V(G)\}$. Then
$\rho(G)x_{u^*}\le |W|x_{u^*}+(n-|W|)x_{v^*}.$
Moreover,
$\rho(G)>\frac n2$ by Lemma \ref{lemma4.1}.
It follows that
\begin{align}\label{align6}
x_{v^*}\ge \frac{\rho(G)-|W|}{n-|W|}x_{u^*}\ge \frac{\rho(G)-|W|}{n}x_{u^*}
    >\frac{1}{2}\big(1-\eta\big)x_{u^*}.
\end{align}
Since $\eta<\frac1{75}$, we have $x_{v^*}>\frac{2}{5}x_{u^*}$.
On the other hand,
$$\rho(G)x_{v^*}
=\sum_{v\in N_W(v^*)}x_v+\sum_{v\in N_{G-W}(v^*)}x_v
\le |W|x_{u^*}+d_G(v^*)x_{v^*}.$$
Combining with $x_{v^*}>\frac{2}{5}x_{u^*}$,
$\rho(G)>\frac n2$ and $|W|\leq \frac12\eta n$, we obtain
$$d_G(v^*)\ge \rho(G)-\frac{x_{u^*}}{x_{v^*}}|W|\geq\rho(G)-\frac{5}{2}|W|> \Big(\frac{1}{2}-\frac{5}{4}\eta\Big)n.$$
Recall that $U=\{v\in V(G)~|~d_G(v)\leq \big(\frac12-4\eta\big)n\}$.
Then $v^*\notin U$, and so $v^*\in V(G)\setminus (U\cup W)$.

Assume now that $v^*\in V_{i^*}\setminus (U\cup W)$ for some $i^*\in\{1,2\}$, and set
$\widehat{i^*}\in\{1,2\}\setminus \{i^*\}$.
Then by Lemma \ref{lemma4.8}, $|N_{V_{i^*}}(v^*)\setminus(U\cup W)|<t(2\ell+1)$.
Thus,
\begin{eqnarray*}
\rho(G)x_{v^*}&=& \sum_{v\in N_{U\cup W}(v^*)}x_v+
                    \sum_{v\in N_{V_{i^*}}(v^*)\setminus(U\cup W)}x_v+
                    \sum_{v\in N_{V_{\widehat{i^*}}}(v^*)\setminus(U\cup W)}x_v \nonumber\\
                    &<& \big(|W|x_{u^*}+|U|x_{v^*}\big)+t(2\ell+1)x_{v^*}+\sum_{v\in V_{\widehat{i^*}}\setminus(U\cup W\cup I_{\widehat{i^*}})}x_v+\sum_{v\in I_{\widehat{i^*}}}x_v\nonumber\\
&\le& \big(|W|x_{u^*}+|U|x_{v^*}\big)+(2t-1)t(2\ell+1)x_{v^*}+
\sum_{v\in I_{\widehat{i^*}}}x_v,
\end{eqnarray*}
where $I_{\widehat{i^*}}$ is an independent set of
$G[V_{\widehat{i^*}}\setminus(U\cup W)]$
such that $\big|V_{\widehat{i^*}}\setminus (U\cup W\cup I_{\widehat{i^*}})\big|<2(t-1)t(2\ell+1)$ (see Lemma \ref{lemma4.9}).
Subsequently,
\begin{align}\label{align7}
\sum_{v\in I_{\widehat{i^*}}}x_v
>\big(\rho(G)-|U|-(2t-1)t(2\ell+1)\big)x_{v^*}-|W|x_{u^*}.
\end{align}

\begin{lem}\label{lemma4.10}
We have $U=\varnothing$.
\end{lem}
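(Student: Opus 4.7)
The plan is to argue by contradiction. Suppose $U\ne\varnothing$ and fix any $u_0\in U$, so that $d_G(u_0)\le\big(\frac{1}{2}-4\eta\big)n$. The strategy is to construct a modified graph $G'$ with $\rho(G')>\rho(G)$, and then to use the extremality of $G$ together with a cycle-replacement argument to produce a copy of $tC_{2\ell+1}$ in $G$ itself.

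First I would derive a quantitative upper bound on $x_{u_0}$ via the eigenvector identity. Splitting $N_G(u_0)=(N_G(u_0)\cap W)\cup(N_G(u_0)\setminus W)$ and using $x_w\le x_{u^*}$ on the first piece and $x_w\le x_{v^*}$ on the second (by the definition of $v^*$), together with Lemma~\ref{lemma4.4}, yields
\[
\rho(G)\,x_{u_0}\;\le\;\frac{1}{2}\eta n\cdot x_{u^*}+\Big(\frac{1}{2}-4\eta\Big)n\cdot x_{v^*}.
\]
Next, define
\[
G'\;=\;\Big(G-\{u_0w:w\in N_G(u_0)\}\Big)+\{u_0v:v\in I_{\widehat{i^*}}\},
\]
where $I_{\widehat{i^*}}$ is the independent set furnished by Lemma~\ref{lemma4.9}. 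The Rayleigh identity gives
\[
X^{T}A(G')X-\rho(G)\;=\;2x_{u_0}\Big(\sum_{v\in I_{\widehat{i^*}}}x_v-\rho(G)\,x_{u_0}\Big).
\]
Applying the lower bound \eqref{align7}, the upper bound just obtained, Lemma~\ref{lemma4.1} for $\rho(G)\ge\frac{n}{2}+(t-1)-\frac{t^2}{2n}$, Lemmas~\ref{lemma4.3}--\ref{lemma4.4} for $|U|,|W|$, and $x_{u^*}\le\frac{2}{1-\eta}x_{v^*}$ from \eqref{align6}, this difference reduces to a quantity of order $\eta n\cdot\frac{1-3\eta}{1-\eta}\,x_{v^*}-O(x_{v^*})$, strictly positive for large $n$ since $\eta<\frac{1}{75}<\frac{1}{3}$. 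Hence $\rho(G')>\rho(G)$.

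By the extremality of $G$, $G'$ must contain $t$ vertex-disjoint copies of $C_{2\ell+1}$, say $C^1,\dots,C^t$. If $u_0\notin V(\bigcup_iC^i)$ they all lie in $G'-u_0=G-u_0\subseteq G$, contradicting the $tC_{2\ell+1}$-freeness of $G$. Otherwise $u_0\in V(C^j)$ for some $j$, and its two $C^j$-neighbors $y_1,y_2$ lie in $N_{G'}(u_0)=I_{\widehat{i^*}}\subseteq V_{\widehat{i^*}}\setminus(U\cup W)$. Setting $R=V(\bigcup_{i\ne j}C^i)$ (of size at most $(t-1)(2\ell+1)$) and applying \eqref{align3} to $y_1,y_2\in V_{\widehat{i^*}}\setminus(U\cup W\cup R)$, their common neighbors in $V_{i^*}\setminus(U\cup W\cup R)$ number at least $\frac{4n}{5}-|V_{i^*}|\ge\big(\frac{3}{10}-\eta\big)n$; excluding the $2\ell+1$ vertices of $V(C^j)$ still leaves a valid choice $z$. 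Replacing $u_0$ by $z$ in $C^j$, using $zy_1,zy_2\in E(G)$, produces a $(2\ell+1)$-cycle $C^{j*}$ in $G$ that is vertex-disjoint from each $C^i$ ($i\ne j$); thus $\{C^{j*}\}\cup\{C^i:i\ne j\}$ is a $tC_{2\ell+1}$ in $G$, the desired contradiction.

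The hard part is choosing $G'$ so that simultaneously (i) $\rho(G')>\rho(G)$ holds and (ii) any $tC_{2\ell+1}$ in $G'$ can be converted into a $tC_{2\ell+1}$ in $G$. The choice $N_{G'}(u_0)=I_{\widehat{i^*}}$ is decisive: the weight lower bound \eqref{align7} on this independent set secures the spectral gain, while the location of $I_{\widehat{i^*}}$ inside the well-controlled region $V_{\widehat{i^*}}\setminus(U\cup W)$ forces the two $u_0$-neighbors in any $C^j\subseteq G'$ into that region, where \eqref{align3} then provides enough common neighbors in $V_{i^*}$ to rebuild an alternative $(2\ell+1)$-cycle in $G$.
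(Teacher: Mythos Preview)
Your proof is correct and follows essentially the same approach as the paper: construct $G'$ by rerouting all edges at $u_0$ to $I_{\widehat{i^*}}$, use \eqref{align7} together with the splitting $\sum_{v\in N_G(u_0)}x_v\le |W|x_{u^*}+d_G(u_0)x_{v^*}$ to get $\rho(G')>\rho(G)$, and use a common-neighbor replacement of $u_0$ to transfer any $tC_{2\ell+1}$ from $G'$ back to $G$. The only cosmetic difference is the order of the two steps---the paper first proves $G'$ is $tC_{2\ell+1}$-free (via the same replacement) and then derives $\rho(G')>\rho(G)$, whereas you argue in the contrapositive direction; your use of the sharper bound $x_{u^*}\le\frac{2}{1-\eta}x_{v^*}$ from \eqref{align6} in place of the paper's $x_{v^*}>\frac25 x_{u^*}$, and your appeal to \eqref{align3} rather than an inline neighborhood estimate, are equally valid.
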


\begin{proof}
Suppose to the contrary that there exists $u_0\in U$.
Let $G'$ be the graph obtained from $G$ by deleting edges incident to $u_0$
and joining all possible edges from $I_{\widehat{i^*}}$ to $u_0$.

We claim that $G'$ is $tC_{2\ell+1}$-free.
Otherwise, $G'$ contains a subgraph $H$ isomorphic to $tC_{2\ell+1}$.
From the construction of $G'$,
we can see that $H$ must contain a $(2\ell+1)$-cycle $C'$ with $u_0\in V(C')$.
Set $H'=H-V(C')$. Then $H'\subseteq G$.
Assume that $N_{C'}(u_0)=\{u_1,u_2\}$,
then $u_1,u_2\in I_{\widehat{i^*}}$ by the definition of $G'$.
Since $I_{\widehat{i^*}}\subseteq V_{\widehat{i^*}}\setminus(U\cup W)$,
we have $u_1,u_2\notin U\cup W$.
By the definitions of $U$ and $W$,
we know that
$d_G(u_j)>\big(\frac12-4\eta\big)n$ and $d_{V_{\widehat{i^*}}}(u_j)<2\eta n$
for $j\in\{1,2\}$.
Hence,
$|N_{V_{i^*}}(u_1)|=d_G(u_1)-d_{V_{\widehat{i^*}}}(u_1)>\big(\frac12-6\eta\big)n$.
Similarly, $|N_{V_{i^*}}(u_2)|>\big(\frac12-6\eta\big)n$.
Moreover, $|V_{i^*}|\leq\frac n2+\eta n$ by Lemma \ref{lemma4.2}.
It follows that
$$|N_{V_{i^*}}(u_1)\cap N_{V_{i^*}}(u_2)|\ge|N_{V_{i^*}}(u_1)|+|N_{V_{i^*}}(u_2)|-|V_{i^*}|
>\Big(\frac{1}{2}-13\eta\Big)n.$$
Now, note that $|H|=t(2\ell+1).$
Then $|N_{V_{i^*}}(u_1)\cap N_{V_{i^*}}(u_2)|>|H|$,
and hence we can find a vertex $u\in
\big(N_{V_{i^*}}(u_1)\cap N_{V_{i^*}}(u_2)\big)\setminus V(H)$.
This implies that $G-V(H')$ contains a $(2\ell+1)$-cycle $C''$, which is obtained from
$C'$ by replacing $\{u_0u_1,u_0u_2\}$ with $\{uu_1,uu_2\}$.
Hence, $C''\cup H'$ is a copy of $tC_{2\ell+1}$ in $G$, a contradiction.
Therefore, the above claim holds.

Now, $d_G(u_0)\le (\frac{1}{2}-4\eta)n$ by the definition of $U$.
Recall that $\rho(G)>\frac{n}{2}$ and $|U|\le \eta n$.
Then
\begin{align}\label{align8}
\rho(G)-d_G(u_0)-|U|>3\eta n.
\end{align}
Moreover,
\begin{align}\label{align9}
\sum_{v\in N_G(u_0)}x_{v}=\sum_{v\in N_W(u_0)}x_v+\sum_{v\in N_{G-W}(u_0)}x_v
\le |W|x_{u^*}+d_G(u_0)x_{v^*}.
\end{align}
Recall that $x_{v^*}>\frac25x_{u^*}$ and $|W|\le \frac12\eta n$.
Combining \eqref{align7}, \eqref{align8} and \eqref{align9},
we get that
\begin{eqnarray*}
   \sum_{v\in I_{\widehat{i^*}}}x_{v}\!\!-\!\!\sum_{v\in N_G(u_0)}x_{v}
    &\ge& \sum_{v\in I_{\widehat{i^*}}}x_v-\big(|W|x_{u^*}+d_G(u_0)x_{v^*}\big) \nonumber\\
    &>& \big(\rho(G)\!-\!d_G(u_0)\!-\!|U|\!-\!(2t-1)t(2\ell+1)\big)x_{v^*}\!-\!2|W|x_{u^*}  \nonumber\\
  &>&\big(3\eta n-(2t-1)t(2\ell+1)\big)\frac{2}{5}x_{u^*}-\eta nx_{u^*} \nonumber\\
  &>& \frac{1}{10}\eta nx_{u^*}
\end{eqnarray*}
for $n$ sufficiently large. Thus,
$$\rho(G')-\rho(G) \ge  X^T\big(A(G')-A(G)\big)X
                  = 2x_{u_0}\Big(\sum_{v\in I_{\widehat{i^*}}}x_v-\sum_{v\in N_G(u_0)}x_v\Big)>0,$$
contradicting the fact that $G$ is extremal with respect to $spex(n,tC_{2\ell+1})$.
Hence, $U=\varnothing$.
\end{proof}

\begin{lem}\label{lemma4.11}
 For each $v\in V(G)$, we have $x_v>\frac{2}{5}x_{u^*}$.
\end{lem}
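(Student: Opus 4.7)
The plan is to mimic the edge-switching argument used for Lemma \ref{lemma4.10}, but now applied to an arbitrary vertex whose eigenvector entry is small, rather than to a vertex of small degree. Suppose for contradiction that there exists $v_0\in V(G)$ with $x_{v_0}\le \frac{2}{5}x_{u^*}$; note $v_0\ne u^*$ since $x_{u^*}>0$. We may further assume $v_0\notin I_{\widehat{i^*}}$: if $v_0\in I_{\widehat{i^*}}$, we simply work with $I_{\widehat{i^*}}\setminus\{v_0\}$, which still satisfies the lower bound in \eqref{align7} up to an additive $x_{u^*}$ term and does not affect the asymptotics.

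First I would define $G'$ to be the graph obtained from $G$ by deleting all edges incident to $v_0$ and joining $v_0$ to every vertex of $I_{\widehat{i^*}}$, and verify that $G'$ is still $tC_{2\ell+1}$-free. This is exactly the argument in Lemma \ref{lemma4.10}: if $G'$ contains a copy $H$ of $tC_{2\ell+1}$, then one cycle $C'\subseteq H$ must use $v_0$ together with two of the newly added edges $v_0u_1,v_0u_2$ with $u_1,u_2\in I_{\widehat{i^*}}\subseteq V_{\widehat{i^*}}\setminus(U\cup W)$. By the definitions of $U$ and $W$ (and using $U=\varnothing$ from Lemma \ref{lemma4.10}), each $u_j$ satisfies $d_{V_{i^*}}(u_j)>(\frac12-6\eta)n$, so $|N_{V_{i^*}}(u_1)\cap N_{V_{i^*}}(u_2)|>(\frac12-13\eta)n$, and we can pick a common neighbor $u\in V_{i^*}$ avoiding $V(H)\cup\{v_0\}$; replacing $v_0$ by $u$ in $C'$ produces a copy of $tC_{2\ell+1}$ in $G$, a contradiction.

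Next I would show $\rho(G')>\rho(G)$, which contradicts the extremality of $G$. By the Rayleigh quotient applied with the Perron vector $X$ of $G$,
$$\rho(G')-\rho(G)\ge \frac{2x_{v_0}}{X^TX}\Big(\sum_{w\in I_{\widehat{i^*}}}x_w-\sum_{w\in N_G(v_0)}x_w\Big),$$
and since $x_{v_0}>0$ by Perron--Frobenius, it suffices to show the bracketed quantity is positive. For the subtracted sum, use $\sum_{w\in N_G(v_0)}x_w=\rho(G)x_{v_0}\le \frac{2}{5}\rho(G)x_{u^*}$. For the main sum, substitute $U=\varnothing$ and $|W|\le \frac{1}{2}\eta n$ into \eqref{align7} and use $x_{v^*}>\frac{1}{2}(1-\eta)x_{u^*}$ from \eqref{align6} to obtain
$$\sum_{w\in I_{\widehat{i^*}}}x_w>\big(\rho(G)-(2t-1)t(2\ell+1)\big)\tfrac{1-\eta}{2}x_{u^*}-\tfrac{\eta n}{2}x_{u^*}.$$
The difference of these bounds is, up to lower order terms, $\big(\tfrac{1-\eta}{2}-\tfrac{2}{5}\big)\rho(G)x_{u^*}-\tfrac{\eta n}{2}x_{u^*}\ge \big(\tfrac{1}{10}-\tfrac{\eta}{2}\big)\cdot\tfrac{n}{2}x_{u^*}-\tfrac{\eta n}{2}x_{u^*}$, which is positive for $\eta<\tfrac{1}{75}$ and $n$ large enough.

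The only mild obstacle I anticipate is handling the side cases ($v_0\in I_{\widehat{i^*}}$, or $v_0\in W$) cleanly, but neither affects the arithmetic: the independent set argument is insensitive to removing one vertex, and no step of the Rayleigh-quotient comparison ever uses $v_0\notin W$. The genuine content is exactly the same inequality already checked in Lemma \ref{lemma4.10}, with the role of the degree bound on $u_0$ replaced by the eigenvector bound on $v_0$.
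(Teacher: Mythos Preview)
Your proposal is correct and follows essentially the same approach as the paper: assume some $u_0$ has $x_{u_0}\le\frac{2}{5}x_{u^*}$, rewire its edges to $I_{\widehat{i^*}}$, verify the new graph is $tC_{2\ell+1}$-free exactly as in Lemma~\ref{lemma4.10}, and use $\sum_{w\in N_G(u_0)}x_w=\rho(G)x_{u_0}\le\frac{2}{5}\rho(G)x_{u^*}$ together with the lower bound on $\sum_{w\in I_{\widehat{i^*}}}x_w$ from \eqref{align7} (with $U=\varnothing$) to force $\rho(G')>\rho(G)$. The paper streamlines the arithmetic slightly by first rewriting $|W|x_{u^*}\le\eta\rho(G)x_{u^*}$ to get $\sum_{w\in I_{\widehat{i^*}}}x_w>(\tfrac12-2\eta)\rho(G)x_{u^*}$, and it does not explicitly address the $u_0\in I_{\widehat{i^*}}$ side case you flagged, but your handling of it is fine and the substance of the two arguments is identical.
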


\begin{proof}
Recall that $\rho(G)>\frac{n}{2}$ and
$|W|\leq\frac12 \eta n$.
Then $|W|<\eta\rho(G)$.
Moreover, $U=\varnothing$ by Lemma \ref{lemma4.10}.
Combining (\ref{align7}), we obtain that
$$\sum_{v\in I_{\widehat{i^*}}}x_v
>\big(\rho(G)-(2t-1)t(2\ell+1)\big)x_{v_0}-\eta \rho(G)x_{u^*}. $$
From \eqref{align6} we know that $x_{v^*}>\frac{1}{2}\big(1-\eta\big)x_{u^*}.$
Thus, for $n$ sufficiently large,
$$\sum_{v\in I_{\widehat{i^*}}}x_v>
\Big(\frac{1}{2}-2\eta\Big)\rho(G)x_{u^*}.$$

Now, suppose to the contrary that there exists $u_0\in V(G)$ such that $x_{u_0}\leq\frac{2}{5}x_{u^*}$.
Let $G'$ be the graph obtained from $G$ by deleting edges incident to $u_0$
and joining all edges from $I_{\widehat{i^*}}$ to $u_0$.
By a similar discussion as in the proof of Lemma \ref{lemma4.10},
we claim that $G'$ is $tC_{2\ell+1}$-free.
However,
 $$\sum_{v\in I_{\widehat{i^*}}}x_{v}\!\!-\!\!\sum_{v\in N_G(u_0)}x_{v}=\sum_{v\in I_{\widehat{i^*}}}x_v -\rho(G)x_{u_0}>\left(\frac{1}{2}-2\eta-\frac25\right)\rho(G)x_{u^*}>0,$$
which implies that
$$\rho(G')-\rho(G) \ge  X^T\big(A(G')-A(G)\big)X
                  = 2x_{u_0}\Big(\sum_{v\in I_{\widehat{i^*}}}x_v-\sum_{v\in N_G(u_0)}x_v\Big)>0,$$
contradicting the fact that $G$ is extremal with respect to $spex(n,tC_{2\ell+1})$.
\end{proof}

\begin{lem}\label{lemma4.12}
$|W|=t-1$ and $\nu=0$.
\end{lem}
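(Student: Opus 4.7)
The plan proceeds in two steps.

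\textbf{Step 1 ($|W|+\nu\le t-1$).} Assume, for contradiction, that $|W|+\nu\ge t$. Since $U=\varnothing$ by Lemma~\ref{lemma4.10}, Lemma~\ref{lemma4.7} furnishes $\nu$ pairwise disjoint $(2\ell+1)$-cycles $C^1,\ldots,C^\nu$ in $G-W$. Pick distinct $w_1,\ldots,w_{t-\nu}\in W$ and build additional cycles iteratively. At stage $j\in\{1,\ldots,t-\nu\}$, set $R_j=\bigcup_{k=1}^{\nu}V(C^k)\cup\bigcup_{k<j}V(\widetilde C^k)$. Since $|R_j|\le(t-1)(2\ell+1)<t(2\ell+1)$, Lemma~\ref{lemma4.6} (with $u_0=w_j$ and $R=R_j$) produces a $(2\ell+1)$-cycle $\widetilde C^j$ that contains $w_j$ and is disjoint from all previously chosen cycles. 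The collection $C^1,\ldots,C^\nu,\widetilde C^1,\ldots,\widetilde C^{t-\nu}$ then realises a $tC_{2\ell+1}$ in $G$, contradicting $tC_{2\ell+1}$-freeness. Hence $|W|+\nu\le t-1$.

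\textbf{Step 2 ($|W|=t-1$ and $\nu=0$).} In view of Step 1 it suffices to prove $|W|\ge t-1$. Suppose instead $|W|\le t-2$. Lemma~\ref{lemma4.8} gives $\Delta(G[V_i\setminus W])<t(2\ell+1)$ and Lemma~\ref{lemma4.7} gives $\nu(G[V_i\setminus W])\le t-1$, so by Chv\'atal--Hanson (Lemma~\ref{lemma-2.2}) we have $e(G[V_i\setminus W])=O(1)$. Thus $G$ is contained in $H:=K_{|W|}+\bigl(K_{|V_1\setminus W|,|V_2\setminus W|}\cup M\bigr)$, where $|M|=O(1)$. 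A direct equitable-partition computation shows that for $a+b=n-r$ with $|a-b|=O(\eta n)$ one has $\rho(K_r+K_{a,b})=\frac{n+3r}{2}+o(1)$; moreover first-order perturbation (the eigenvector of $K_r+K_{a,b}$ has weight of order $1/\sqrt{n}$ on $V_i\setminus W$) shows that the $O(1)$ edges of $M$ shift $\rho$ by only $O(1/n)=o(1)$. Consequently $\rho(G)\le\rho(H)\le\frac{n+3|W|}{2}+o(1)\le\frac{n+3(t-2)}{2}+o(1)$. Performing the same computation on $G^*=K_{t-1}+T_{n-t+1,2}$ sharpens Lemma~\ref{lemma4.1} to $\rho(G^*)=\frac{n+3(t-1)}{2}+o(1)$. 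Combined with the extremality inequality $\rho(G)\ge\rho(G^*)$, this forces $\frac{n+3(t-2)}{2}+o(1)\ge\frac{n+3(t-1)}{2}+o(1)$, impossible for large $n$. Therefore $|W|\ge t-1$, whence $|W|=t-1$ and $\nu=0$.

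\textbf{Main obstacle.} The crux is Step 2: one must (i) certify the containment $G\subseteq H$ with $|M|=O(1)$ by combining Lemmas~\ref{lemma4.7} and~\ref{lemma4.8} with Chv\'atal--Hanson, and (ii) extract sharp enough spectral estimates for $\rho(K_r+K_{a,b})$ to see the $3/2$-gap between $|W|=t-1$ and $|W|\le t-2$. In particular, the easy bound $\rho(G^*)\ge\frac{n}{2}+(t-1)-\frac{t^2}{2n}$ from Lemma~\ref{lemma4.1} is too weak when $t\ge 4$, so it must be sharpened through a direct equitable-partition analysis of $G^*$; controlling the $O(1)$ edges of $M$ via first-order perturbation (rather than via Weyl's inequality, which would lose a constant factor) is the most delicate technical point.
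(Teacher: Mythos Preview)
Your Step~1 is correct and mirrors the paper's argument (the claim that $\widetilde C^j$ actually \emph{contains} $w_j$ is not guaranteed by Lemma~\ref{lemma4.6}, but it is also unnecessary: disjointness follows because $\widetilde C^j$ lies in $G-((W\cup R_j)\setminus\{w_j\})$ and $V(\widetilde C^{j'})\subseteq R_j$ for $j'<j$).

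Your Step~2 takes a genuinely different route. The paper does \emph{not} compute $\rho(K_r+K_{a,b})$ at all. Instead it builds an explicit $tC_{2\ell+1}$-free competitor $G'$: delete the $O(1)$ edges inside $V_1\setminus W$ and $V_2\setminus W$, pick $S\subseteq V_1\setminus W$ with $|S|=t-1-|W|$, and add every edge from $S$ to $V_1\setminus(W\cup S)$. Then $G'\subseteq K_{t-1}+K_{n_1',n_2'}$ is $tC_{2\ell+1}$-free, and the Rayleigh-quotient bound
\[
\rho(G')-\rho(G)\ \ge\ \sum_{u\in S,\,v\in V_1\setminus(W\cup S)}2x_ux_v\ -\ \sum_{uv\in E(H)}2x_ux_v
\]
is made strictly positive using Lemma~\ref{lemma4.11} (every $x_v>\tfrac25x_{u^*}$) together with the Chv\'atal--Hanson bound on $e(H)$: the first sum is $\Omega(n)\,x_{u^*}^2$ while the second is $O(1)\,x_{u^*}^2$. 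This avoids any asymptotic spectral computation and any perturbation theory.

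Your approach can be made to work, but the perturbation step has a gap as written. To get the \emph{upper} bound $\rho(H)\le\rho(K_{|W|}+K_{a',b'})+o(1)$ you must control the Perron vector of $H$, not of $K_{|W|}+K_{a',b'}$; knowing that the latter has $O(1/\sqrt n)$ entries on $V_i\setminus W$ does not directly bound $\rho(H)$ from above. One fix is to argue directly that the Perron vector $Y$ of $H$ also satisfies $y_v=O(1/\sqrt n)$ on $V_i\setminus W$ (all but $O(1)$ such vertices share the same neighbourhood in $H$, hence the same $y$-value, forcing that common value to be $O(1/\sqrt n)$ by $\|Y\|_2=1$; the $O(1)$ exceptional vertices then differ from it by $O(y_{\max}/\rho)$). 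Another fix is to invoke analytic perturbation with the spectral gap of $K_{|W|}+K_{a',b'}$, which one must first check is $\Theta(n)$. Either way this is more work than the paper's route, which exploits Lemma~\ref{lemma4.11} precisely to sidestep such estimates.
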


\begin{proof}
Note that $U=\varnothing$.
By Lemma \ref{lemma4.7}, $\nu=\nu(\cup_{i=1}^2 G[V_i\setminus W])\le t-1$;
and if $\nu\ge 1$, then $G-W$ contains $\nu$ disjoint $(2\ell+1)$-cycles $C^1,C^2,\dots,C^\nu$.

We first claim that $|W|\le t-1-\nu$.
Otherwise, $|W|\ge t-\nu$.
Let $R_0=\{u_1,u_2,\dots,u_{t-\nu}\}$ be a subset of $W$.
Furthermore, we define $R_1=R_0\setminus\{u_1\}$ if $\nu=0$; and $R_1=(R_0\setminus\{u_1\})\cup(\cup_{i=1}^\nu V(C^i))$ if $\nu\ge 1$.
Then $|R_1|\leq (t-1)(2\ell+1).$ By Lemma \ref{lemma4.6},
$G-\big((W\cup R_1)\setminus \{u_1\}\big)$ contains a $(2\ell+1)$-cycle $C^{\nu+1}$,
where $V(C^{\nu+1})\cap R_0\subseteq \{u_1\}$.
If $t-\nu\geq2$, then we further define $R_2=\big(R_1\setminus \{u_2\}\big)\cup V(C^{\nu+1})$.
Clearly, $|R_2|\leq (t-1)(2\ell+1).$
Again by Lemma \ref{lemma4.6},
$G-\big((W\cup R_2)\setminus \{u_2\}\big)$ contains a $(2\ell+1)$-cycle $C^{\nu+2}$,
where $V(C^{\nu+2})\cap R_0\subseteq \{u_2\}$.
Repeating the above steps, we obtain a sequence of vertex subsets
$R_1,R_2,\ldots,R_{t-\nu}$ with
$R_j=\big(R_{j-1}\setminus \{u_{j}\}\big)\cup \big(\cup_{k=1}^{j-1}V(C^{\nu+k})\big)$
and $|R_j|\leq (t-1)(2\ell+1)$ such that
$G-\big((W\cup R_j)\setminus \{u_j\}\big)$ contains a $(2\ell+1)$-cycle $C^{\nu+j}$ for each $j\in \{1,\dots,t-\nu\}$.
Furthermore, $V(C^{\nu+j})\cap R_0\subseteq \{u_j\}$ for $1\leq j\leq t-\nu$.
Thus we can observe that $C^1,C^2,\dots,C^t$ are disjoint,
which contradicts the fact that $G$ is $tC_{2\ell+1}$-free.

Now define $H=\cup_{i=1}^{2}G[V_i\setminus W]$.
Then $\nu(H)=\nu$.
We further claim that
\begin{align}\label{align10}
e(H)\le (t-1)(2t\ell+t+1).
\end{align}
The case $\nu=0$ is trivial.
Assume that $\nu\ge 1$.
By Lemma \ref{lemma4.8},
$\Delta(H)<t(2\ell+1)$.
Recall that $f(\nu,\Delta)=\max\{e(G)~|~\nu(G)\le \nu, \Delta(G)\le \Delta\}$,
and by Lemma \ref{lemma-2.2} $f(\nu,\Delta)\leq \nu(\Delta+1)$.
Thus,
$$e(H)\le f\big(\nu(H),\Delta(H)\big)
\leq f(\nu,t(2\ell+1))\leq\nu\cdot(2t\ell+t+1).
$$
Note that $\nu\leq t-1$. Therefore, \eqref{align10} holds.

Note that $|W|\le t-1-\nu\le t-1$.
It suffices to prove $|W|=t-1$, as it implies that $\nu=0$.
Suppose to the contrary that $|W|\le t-2$.
Take $S\subseteq V_1\setminus W$ with $|S|=t-1-|W|$,
and let $G'$ be the graph obtained from $G$ by deleting all edges in $E(H)$ and
adding all possible edges from $S$ to $V_1\setminus (W\cup S)$.
Clearly, $G'$ is a spanning subgraph of
$K_{|W\cup S|}+K_{|V_1\setminus (W \cup S)|,|V_2\setminus W|}$.
Since $|W\cup S|=t-1$, $G'$ contains at most $t-1$ disjoint odd cycles,
and so $G'$ is $tC_{2\ell+1}$-free.

Recall that $|V_1|\ge \frac12 n-\eta n$, and by Lemma \ref{lemma4.11},
$x_v>\frac25x_{u^*}$ for each $v\in V(G)$. Combining \eqref{align10}, we have
\begin{eqnarray*}
  \rho(G')-\rho(G)  &\ge&   X^T\big(A(G')-A(G)\big)X
    \geq\sum_{u\in S,v\in V_1\setminus (W \cup S)}2x_ux_v-\sum_{uv\in E(H)}2x_ux_v \nonumber\\
  &\ge& |S|\big(\frac{n}{2}\!-\!\eta n\!-\!t\!+1\big)\frac{8}{25}x_{u^*}^2 \!-\!(t-1)(2t\ell\!+\!t\!+\!1)2x_{u^*}^2\nonumber\\
  &>&0,
\end{eqnarray*}
contradicting the fact that $G$ is an extremal graph with respect to $spex(n,tC_{2\ell+1})$.
\end{proof}

In the following, we complete the proof of Theorem \ref{theorem1.3}.

\begin{proof}
Recall that $G^*=K_{t-1}+T_{n-t+1,2}$ and we shall prove $G\cong G^*$.
We first look for a $(t-1)$-clique in which each vertex dominates all vertices of $G$.
By Lemma \ref{lemma4.12}, we know that $|W|=t-1$.
It suffices to show that $d_G(u)=n-1$ for each $u\in W$.

Suppose to the contrary that there exists a vertex $u\in W$ with $d(u)<n-1$.
Then we can select a non-neighbor $v$ of $u$ in $G$.
Let $G'=G+\{uv\}$. Then $\rho(G')>\rho(G)$.
Since $G$ is extremal with respect to $spex(n,tC_{2\ell+1})$,
$G'$ contains a subgraph $H$ isomorphic to $tC_{2\ell+1}$,
where $uv\in E(H)$.
More precisely, $H$ contains a $(2\ell+1)$-cycle $C$ with $uv\in V(C)$.
Set $H'=H-V(C)$. Then $H'\subseteq G$,
and by Lemma \ref{lemma4.6},
$G-\big((W\cup V(H'))\setminus\{u\}\big)$ contains a $(2\ell+1)$-cycle $C'$.
Since $u\notin V(H')$, $H'\cup C'$ is a copy of $tC_{2\ell+1}$ in $G$,
a contradiction.
Therefore, $d_G(u)=n-1$ for each $u\in W$.

Let $|V_i\setminus W|=n_i$ for $i\in \{1,2\}$.
Assume without loss of generality that $n_1\ge n_2$.
By Lemma \ref{lemma4.12}, $\nu=\nu(\cup_{i=1}^2 G[V_i\setminus W])=0$,
and thus $G-W\subseteq K_{n_1,n_2}$.
Since $G$ is extremal, we have $G-W\cong K_{n_1,n_2}$.
To show $G\cong G^*$,
it suffices to show $G-W\cong T_{n-t+1,2}$, or equivalently, $n_1-n_2\le 1$.

Suppose to the contrary that $n_1\geq n_2+2$.
By symmetry, we may assume $x_u=x_i$ for each $u\in V_i\setminus W$
and $i\in\{1,2\}$.
Moreover, let $x_u=x_3$ for each $u\in W$. Thus,
\begin{align*}
\rho(G)x_1=n_2x_2+(t-1)x_3,
~\rho(G)x_2=n_1x_1+(t-1)x_3,
\end{align*}
and $\rho(G)x_3=n_1x_1+n_2x_2+(t-2)x_3.$
It follows that
\begin{align}\label{align11}
x_1=\frac{\rho(G)+1}{\rho(G)+n_1}x_3~~\text{and}
~~x_2=\frac{\rho(G)+1}{\rho(G)+n_2}x_3.
\end{align}
Select $u_0\in V_1\setminus W$.
Let $G''$ be the graph obtained from $G$ by deleting edges
from $u_0$ to $V_2\setminus W$
and adding all edges from $u_0$ to $V_1\setminus (W\cup\{u_0\})$.
Then $G''\cong K_{t-1}+K_{n_1-1,n_2+1}$, and thus $G''$ is still $tC_{2\ell+1}$-free.
Moreover,
$$\rho(G'')-\rho(G)\ge
\sum_{v\in V_1\setminus (W\cup\{u_0\})}2x_{u_0}x_v-
   \sum_{v\in V_2\setminus W}2x_{u_0}x_v=2x_1\big((n_1-1)x_1-n_2x_2\big).
$$
In view of \eqref{align11}, we have
$$(n_1-1)x_1-n_2x_2=
\frac{(\rho(G)+1)\left((n_1-n_2-1)\rho(G)-n_2\right)}{(\rho(G)+n_1)(\rho(G)+n_2)}x_3
>0,
$$
since $n_1\geq n_2+2$ and $\rho(G)>\frac n2>n_2$.
It follows that $\rho(G'')>\rho(G),$ a contradiction.
Therefore, $n_1-n_2\le 1$ and $G\cong K_{t-1}+T_{n-t+1,2}$.
This completes the proof.
\end{proof}

\section{Proof of Theorem \ref{theorem1.5}}\label{section5}

In this section, we will often assume that
$n$ is sufficiently large without saying so explicitly.
We first give the lower and upper bounds of $\rho(S_{n,\ell}^{+})$ and $\rho(S_{n,\ell}^{++})$.

\begin{lem}\label{lem5.1}
For fixed $\ell$ and sufficiently large $n$, \\
(i) $\rho(S_{n,\ell}^{++})\ge \rho(S_{n,\ell}^+)\ge \frac{\ell-1+\sqrt{(\ell-1)^2+4\ell(n-\ell)}}{2}\ge\sqrt{\ell n}$ if $\ell\geq2$;\\
(ii) $\rho(S_{n,\ell}^{++})\le \sqrt{\big(\ell+\frac{1}{4\ell}\big)n}$
if $\ell\geq1$.
\end{lem}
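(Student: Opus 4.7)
For part (i), my plan is to combine graph containment with an equitable-partition computation. Since $S_{n,\ell}\subseteq S_{n,\ell}^{+}\subseteq S_{n,\ell}^{++}$, monotonicity of the spectral radius under edge addition gives $\rho(S_{n,\ell}^{++})\geq\rho(S_{n,\ell}^{+})\geq\rho(S_{n,\ell})$, so I only need to evaluate $\rho(S_{n,\ell})$. The partition of $V(S_{n,\ell})$ into the $\ell$-clique $A$ and the independent set $B$ of size $n-\ell$ is equitable, with quotient matrix
\begin{equation*}
\begin{pmatrix}\ell-1 & n-\ell\\ \ell & 0\end{pmatrix}.
\end{equation*}
Its Perron root is exactly $\frac{\ell-1+\sqrt{(\ell-1)^{2}+4\ell(n-\ell)}}{2}$, which equals $\rho(S_{n,\ell})$ since the partition is equitable. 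The comparison with $\sqrt{\ell n}$ is then routine: isolating the square root and squaring reduces the desired inequality to $(\ell-1)^{2}\ell n\geq \ell^{4}$, i.e.\ $n\geq \ell^{3}/(\ell-1)^{2}$, which holds for $\ell\geq 2$ and $n$ large.

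For part (ii), my plan is to exploit the symmetry of $S_{n,\ell}^{++}$ to reduce the Perron equations to a small algebraic identity. The automorphism group acts transitively on $A$, on the set of matched vertices in $B$, and (trivially) on the at-most-one unmatched vertex, so by uniqueness of the Perron vector its entries take only two or three distinct values $a$, $b$, and possibly $c$. When $n-\ell$ is even, the two equations $\rho a=(\ell-1)a+(n-\ell)b$ and $\rho b=\ell a+b$ eliminate $a/b$ to give $(\rho-\ell+1)(\rho-1)=\ell(n-\ell)$, which I rewrite as
\begin{equation*}
\rho^{2}=\ell\rho+\ell n-\ell^{2}-\ell+1.
\end{equation*}
When $n-\ell$ is odd, adding $\rho c=\ell a$ to the system and eliminating $b$ and $c$ produces the same identity but with an extra negative term, so the inequality $\rho^{2}\leq \ell\rho+\ell n-\ell^{2}-\ell+1$ persists in both parities.

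To close the argument, I will feed a crude a priori bound on $\rho$ back into this identity. Counting edges gives $m(S_{n,\ell}^{++})\leq \ell n+n/2$, whence $\rho\leq\sqrt{2m}\leq\sqrt{(2\ell+1)n}$; substituting into the identity yields
\begin{equation*}
\rho^{2}\leq \ell n+\ell\sqrt{(2\ell+1)n}+O(1).
\end{equation*}
For fixed $\ell$ and $n$ sufficiently large (concretely $n\geq C\ell^{5}$ for some absolute constant $C$), the lower-order term $\ell\sqrt{(2\ell+1)n}$ is bounded above by $n/(4\ell)$, producing $\rho^{2}\leq(\ell+\frac{1}{4\ell})n$, as required.

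The main nuisance I anticipate is the odd $n-\ell$ case in part (ii), where the third Perron coordinate on the unmatched vertex must be tracked carefully to confirm that the same algebraic inequality still holds; everything else is two- or three-dimensional linear algebra plus standard estimates.
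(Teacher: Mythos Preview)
Your proof is correct and reaches the same key inequality $(\rho-\ell+1)(\rho-1)\le \ell(n-\ell)$ as the paper, but you arrive at it differently. For part (ii) the paper does not invoke automorphisms or an equitable partition; instead it picks $u_0\in W$ and $v_0\in\overline{W}$ maximising the Perron coordinate on each side and reads off $\rho x_{u_0}\le(\ell-1)x_{u_0}+(n-\ell)x_{v_0}$ and $\rho x_{v_0}\le \ell x_{u_0}+x_{v_0}$ directly, which yields the inequality in one stroke with no parity split. Your route via symmetry gives the same bound (with equality when $n-\ell$ is even and the extra $-\ell/\rho$ term when it is odd), but the odd case is exactly the nuisance you anticipated, and the paper's max-coordinate trick sidesteps it entirely. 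The final deduction is also cosmetically different: the paper argues by contradiction that $\rho>\sqrt{(\ell+\tfrac{1}{4\ell})n}$ would force $(\rho-\ell+1)(\rho-1)>\ell(n-\ell)$, whereas you bootstrap from the crude edge bound $\rho\le\sqrt{(2\ell+1)n}$; both amount to the same quadratic comparison and both need $n$ at least of order $\ell^5$. Part (i) is identical to the paper's argument.
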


\begin{proof}
(i) By a straightforward calculation, we have
$\rho(S_{n,\ell})=\frac{\ell-1+\sqrt{(\ell-1)^2+4\ell(n-\ell)}}2.$
Since $S_{n,\ell}\subseteq S_{n,\ell}^+\subseteq S_{n,\ell}^{++}$,
the inequality holds obviously for $\ell\geq2$.

(ii) By Perron-Frobenius theorem, there exists a positive unit eigenvector $X=(x_1,\ldots,x_n)^T$ corresponding to $\rho$, where $\rho=\rho(S_{n,\ell}^{++})$.
Let $W$ be the set of dominating vertices in $S_{n,\ell}^{++}$, and $\overline{W}=V(S_{n,\ell}^{++})\setminus W$.
Choose $u_0\in W$ and $v_0\in\overline{W}$ with $x_{u_0}=\max_{u\in W}x_u$
and $x_{v_0}=\max_{v\in \overline{W}}x_v$.
Note that $|W|=\ell$.
Then, $\rho x_{u_0}\leq (\ell-1)x_{u_0}+(n-\ell)x_{v_0}$
and $\rho x_{v_0}\leq \ell x_{u_0}+x_{v_0}$.
Combining these two inequalities,
we obtain $$(\rho-\ell+1)(\rho-1)\leq (n-\ell)\ell.$$
If $\rho>\sqrt{\big(\ell+\frac{1}{4\ell}\big)n}$,
then $(\rho-\ell+1)(\rho-1)>(n-\ell)\ell$, a contradiction.
Thus, $\rho\le\sqrt{\big(\ell+\frac{1}{4\ell}\big)n}$.
\end{proof}

Recall that $\ell\geq 2$ in Theorem \ref{theorem1.5}.
We shall proceed the proof by induction on $t$.
When $t=1$, the result holds immediately by \cite{Cioaba1,Nikiforov5,Zhai-3}.
In the following, we assume that $t\geq 2$.

For convenience, set $\lambda=\ell t-1$,
then $\lambda\ge 2\ell-1$.
Let $G$ be an extremal graph with respect to $spex(n,tC_{2\ell})$.
Clearly, $G$ is connected.
By Perron-Frobenius theorem, there exists a positive unit eigenvector $X=(x_1,\ldots,x_n)^T$ corresponding to $\rho(G)$.
Choose $u^*\in V(G)$ with $x_{u^*}=\max\{x_i~|~i=1,2,\dots,n\}$.
For a vertex $u$ and a positive integer $i$,
let $N_i(u)$ denote the set of vertices at distance $i$ from $u$ in $G$.
By the induction hypothesis, we obtain that for $n'$ sufficiently large,
\begin{align}\label{align12}
spex\big(n',(t-1)C_{2\ell}\big)=\left\{
\begin{array}{ll}
\rho(S_{n',\lambda-\ell}^{++})  & \hbox{if $\ell=2$,} \\
\rho(S_{n',\lambda-\ell}^{+})  & \hbox{if $\ell\ge 3$.}
\end{array}
\right.
\end{align}

We then show that for each $u\in V(G)$,
$G-\{u\}$ contains $t-1$ disjoint copies of $C_{2\ell}$
through Lemmas \ref{lem5.2} and \ref{lem5.3}.
This will be used to bound $\rho(G)$ in Lemma \ref{lem5.4},
to bound $\sum_{v\in V(G)}d_G^2(v)$ in Lemma \ref{lem5.5}
and to prove a key property in Lemma \ref{lem5.6}.

\begin{lem}\label{lem5.2}
Let $H$ be a graph on $n-1$ vertices.
Then $\rho(H)\ge \rho(K_1+H)-\frac{n-1}{\rho(K_1+H)}$.
\end{lem}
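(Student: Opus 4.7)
The plan is to apply the Rayleigh quotient for $A(H)$ using the restriction of the Perron eigenvector of $G:=K_1+H$. Write $u$ for the dominating vertex of $G$, set $\rho:=\rho(G)$, and let $X=(x_v)_{v\in V(G)}$ be a positive unit eigenvector of $A(G)$ corresponding to $\rho$. Denote by $Y:=X|_{V(H)}$ the restriction of $X$ to $V(H)$, and by $\mathbf{1}$ the all-ones vector on $V(H)$. Since $\|X\|^2=1$, we have $\|Y\|^2=1-x_u^2$.

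The eigenvalue equation at an arbitrary vertex $v\in V(H)$ reads $\rho x_v=x_u+\sum_{w\in N_H(v)}x_w$, which in matrix form is
$$A(H)Y=\rho Y-x_u\mathbf{1}.$$
Meanwhile, the eigenvalue equation at $u$ gives $\rho x_u=\mathbf{1}^T Y$. Taking the inner product of the displayed identity with $Y$ yields
$$Y^T A(H)Y=\rho\|Y\|^2-x_u(\mathbf{1}^T Y)=\rho(1-x_u^2)-\rho x_u^2=\rho(1-2x_u^2).$$

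Next I would bound $x_u$ by applying Cauchy--Schwarz to $\rho x_u=\mathbf{1}^T Y\le \sqrt{n-1}\,\|Y\|=\sqrt{(n-1)(1-x_u^2)}$, which gives $\rho^2 x_u^2\le (n-1)(1-x_u^2)$, equivalently
$$\frac{x_u^2}{1-x_u^2}\le \frac{n-1}{\rho^2}.$$
Finally, by the Rayleigh characterization of $\rho(H)$,
$$\rho(H)\ge \frac{Y^T A(H)Y}{\|Y\|^2}=\rho\cdot\frac{1-2x_u^2}{1-x_u^2}=\rho\left(1-\frac{x_u^2}{1-x_u^2}\right)\ge \rho-\frac{n-1}{\rho},$$
which is exactly the desired bound. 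There is no real obstacle here; the only point worth double-checking is that $Y$ is nonzero so that the Rayleigh quotient is well defined, but this follows from $\rho x_u=\mathbf{1}^T Y>0$ since $X$ is strictly positive.
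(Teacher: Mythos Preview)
Your proof is correct and follows essentially the same approach as the paper: restrict the Perron eigenvector of $K_1+H$ to $V(H)$, evaluate the Rayleigh quotient for $A(H)$ via the eigenvalue equations, and bound the error term with Cauchy--Schwarz. The only differences are cosmetic---you normalize $X$ to be a unit vector and package the eigenvalue equations as $A(H)Y=\rho Y-x_u\mathbf{1}$, whereas the paper works directly with the scalar identities---but the computations are line-for-line equivalent.
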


\begin{proof}
Let $\overline{u}$ be the dominating vertex of $K_1+H$
corresponding to $K_1$.
Set $\overline{\rho}=\rho(K_1+H)$ and
$Y=(y_u)$ be an eigenvector to $\overline{\rho}$.
Using the Rayleigh quotient gives
$$\overline{\rho}=\frac{2\sum_{uv\in E(K_1+H)}y_{u}y_{v}}{\sum_{u\in V(K_1+H)}y_{u}^2}
=\frac{2\sum_{uv\in E(H)}y_uy_v+2y_{\overline{u}}\sum_{u\in V(H)}y_u}{y_{\overline{u}}^2+\sum_{u\in V(H)}y_u^2}.$$
Since $\overline{\rho}y_{\overline{u}}=\sum_{u\in V(H)}y_u$,
we have $y_{\overline{u}}\sum_{u\in V(H)}y_u=\overline{\rho}y_{\overline{u}}^2
=\frac{1}{\overline{\rho}}\big(\sum_{u\in V(H)}y_u\big)^2.$
Thus,
$$ 2\sum_{uv\in E(H)}y_uy_v=\overline{\rho}\sum_{u\in V(H)}y_u^2-\overline{\rho}y_{\overline{u}}^2
=\overline{\rho}\sum_{u\in V(H)}y_u^2-\frac{1}{\overline{\rho}}\Big(\sum_{u\in V(H)}y_u\Big)^2.$$
From Cauchy-Schwarz inequality that
$\big(\sum_{u\in V(H)}y_u\big)^2\leq(n-1)\sum_{u\in V(H)}y_u^2$.
It follows that
$$\rho(H)\ge \frac{2\sum_{uv\in E(H)}y_uy_v}{\sum_{u\in V(H)}y_u^2}\ge \overline{\rho}- \frac{n-1}{\overline{\rho}},$$
as desired.
\end{proof}

\begin{lem}\label{lem5.3}
For every vertex $u\in V(G)$, $G-\{u\}$ contains $t-1$ disjoint $2\ell$-cycles.
\end{lem}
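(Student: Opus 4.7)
The plan is a contradiction argument. Assume that some vertex $u\in V(G)$ has the property that $G-\{u\}$ is $(t-1)C_{2\ell}$-free. Since $n-1$ is still sufficiently large, the induction hypothesis \eqref{align12} together with the trivial inclusion $S_{n-1,\lambda-\ell}^{+}\subseteq S_{n-1,\lambda-\ell}^{++}$ (relevant when $\ell\ge 3$) gives
\[
\rho(G-\{u\})\le \rho(S_{n-1,\lambda-\ell}^{++})\le \sqrt{\Big(\lambda-\ell+\tfrac{1}{4(\lambda-\ell)}\Big)(n-1)},
\]
where the second step is Lemma \ref{lem5.1}(ii).

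Next I derive a matching lower bound on $\rho(G-\{u\})$. Observe that $G$ is a spanning subgraph of $K_1+(G-\{u\})$ (adjoining $u$ as a dominating vertex only adds edges), so $\rho(G)\le \rho(K_1+(G-\{u\}))$. Since the map $x\mapsto x-(n-1)/x$ is strictly increasing on $(0,\infty)$, applying Lemma \ref{lem5.2} with $H=G-\{u\}$ yields
\[
\rho(G-\{u\})\ge \rho(K_1+H)-\frac{n-1}{\rho(K_1+H)}\ge \rho(G)-\frac{n-1}{\rho(G)}.
\]
By the extremality of $G$ and Lemma \ref{lem5.1}(i), $\rho(G)\ge \rho(S_{n,\lambda}^{+})\ge \sqrt{\lambda n}$ (or $\rho(G)\ge\rho(S_{n,\lambda}^{++})\ge\sqrt{\lambda n}$ when $\ell=2$); here one also uses that $S_{n,\lambda}^{+}$ and $S_{n,\lambda}^{++}$ are indeed $tC_{2\ell}$-free, because any $C_{2\ell}$ in either graph occupies at least $\ell$ clique vertices, so $t$ disjoint copies would require $\ell t>\lambda$ clique vertices. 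Combining the two displays and squaring the resulting inequality (both sides are positive for large $n$) forces
\[
\frac{((\lambda-1)n+1)^2}{\lambda n}\le \Big(\lambda-\ell+\tfrac{1}{4(\lambda-\ell)}\Big)(n-1).
\]

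The leading coefficient of $n$ on the left is $(\lambda-1)^2/\lambda=\lambda-2+1/\lambda$, and on the right it is $\lambda-\ell+1/(4(\lambda-\ell))$; their difference equals $(\ell-2)+1/\lambda-1/(4(\lambda-\ell))$, which is strictly positive for all $\ell\ge 2$ and $t\ge 2$. Indeed, for $\ell\ge 3$ the term $\ell-2\ge 1$ dominates the bounded correction, while for $\ell=2$ (so $\lambda=2t-1\ge 3$ and $\lambda-\ell=2t-3\ge 1$) the difference simplifies to $\frac{6t-11}{4(2t-1)(2t-3)}>0$. Hence the displayed inequality fails for sufficiently large $n$, contradicting the assumption, and the lemma follows. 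The main subtlety is precisely this last comparison in the edge case $\ell=2$, where both sides agree at the dominant order $\lambda-2$ and the strict gap must be extracted from the $1/\lambda$-versus-$1/(4(\lambda-\ell))$ terms; this is why the sharp form of Lemma \ref{lem5.1}(ii) is essential rather than a cruder estimate.
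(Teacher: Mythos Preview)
Your argument is correct and reaches the same numerical contradiction as the paper, but it gets there by a slightly different route. The paper first argues separately that $u$ must be a dominating vertex: if some $v$ were a non-neighbour of $u$, then $G+uv$ would still be $tC_{2\ell}$-free (since $(G+uv)-\{u\}=G-\{u\}$ remains $(t-1)C_{2\ell}$-free) and would have larger spectral radius, contradicting extremality. Only after establishing $G\cong K_1+(G-\{u\})$ does the paper invoke Lemma~\ref{lem5.2}. You bypass this structural step by observing the trivial containment $G\subseteq K_1+(G-\{u\})$, hence $\rho(G)\le\rho(K_1+(G-\{u\}))$, and then exploiting the monotonicity of $x\mapsto x-(n-1)/x$ to transfer the conclusion of Lemma~\ref{lem5.2} from $\rho(K_1+(G-\{u\}))$ down to $\rho(G)$. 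This is a clean shortcut; it costs nothing and saves the dominating-vertex paragraph. The final inequality you derive is exactly the one underlying the paper's display~\eqref{align14}.

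One small wording issue: your justification that ``any $C_{2\ell}$ in either graph occupies at least $\ell$ clique vertices'' is not literally true for $S_{n,\lambda}^{++}$ when $\ell\ge 3$ (for instance, a $C_6$ in $S_{n,5}^{++}$ can use only two clique vertices by alternating clique vertices with matched pairs). This does not affect your proof, since you only use $S_{n,\lambda}^{++}$ in the case $\ell=2$ (where the claim does hold) and otherwise rely on $S_{n,\lambda}^{+}$; but the sentence should be tightened to reflect that.
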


\begin{proof}
Suppose to the contrary that there exists a vertex $u$ such that $G-\{u\}$ is $(t-1)C_{2\ell}$-free. Then $\rho(G-\{u\})\le spex\big(n-1,(t-1)C_{2\ell}\big)$.
It follows from \eqref{align12} that
\begin{align}\label{align13}
\rho\big(G-\{u\}\big)\le\rho\big(S_{n-1,\lambda-\ell}^{++}\big),
\end{align}
as $\rho(S_{n-1,\lambda-\ell}^+)\leq\rho(S_{n-1,\lambda-\ell}^{++}).$

Recall that $t,\ell\ge2$ and $\lambda\geq2\ell-1\ge3$.
We can easily check that
$\sqrt{\lambda}-\frac{1}{\sqrt{\lambda}}>
\sqrt{\lambda-\ell+\frac{1}{4(\lambda-\ell)}}.$
By Lemma \ref{lem5.1} (ii), we further have
\begin{align}\label{align14}
  \sqrt{\lambda n}-\frac{n}{\sqrt{\lambda n}}> \sqrt{\Big(\lambda-\ell+\frac{1}{4(\lambda-\ell)}\Big)n}
\ge\rho(S_{n,\lambda-\ell}^{++})>\rho(S_{n-1,\lambda-\ell}^{++}).
\end{align}

On the one hand, $u$ is a dominating vertex of $G$.
Otherwise, there exists a vertex $v$ not adjacent to $u$.
Let $G^*$ be the graph obtained from $G$ by adding the edge $uv$.
Since $G^*-\{u\}=G-\{u\}$, $G^*-\{u\}$ is also $(t-1)C_{2\ell}$-free,
and thus $G^*$ is $tC_{2\ell}$-free.
However, $G\subset G^*$ indicates that $\rho(G)<\rho(G^*)$,
contradicting the fact that $G$ is extremal with respect to $spex(n,tC_{2\ell})$.

On the other hand, notice that $S_{n,\lambda}^+$ is $tC_{2\ell}$-free,
then $\rho(G)\ge \rho(S_{n,\lambda}^+)$, and so
$\rho(G)\ge \sqrt{\lambda n}$ by Lemma \ref{lem5.1} (i).
Since $u$ is a dominating vertex of $G$, one can see $G\cong K_1+(G-\{u\})$.
Combining $\rho(G)\ge \sqrt{\lambda n}$ and \eqref{align14}
with Lemma \ref{lem5.2}, we have
$$\rho\big(G-\{u\}\big)\ge \rho(G)-\frac{n-1}{\rho(G)}\geq
\sqrt{\lambda n}-\frac{n}{\sqrt{\lambda n}}
>\rho\big(S_{n-1,\lambda-\ell}^{++}\big),$$
which contradicts \eqref{align13}.
Therefore, the lemma holds.
\end{proof}

Choose $u\in V(G)$ arbitrarily.
By Lemma \ref{lem5.3},
$G-\{u\}$ contains $t-1$ disjoint ${2\ell}$-cycles,
say $C^1,\dots,C^{t-1}$.
Let $V'=\cup_{j=1}^{t-1}V(C^j)$ and $G'=G-V'$.
Then $G'$ is $C_{2\ell}$-free.
Set $N_i'(u)=N_i(u)\setminus V'$ for $i\in\{1,2\}$.
Clearly, $G'[N_1'(u)]$ is $P_{2\ell-1}$-free.
By Lemma \ref{lemma2.5}, $ e(N_1'(u))\le(\ell-\frac32)|N_1'(u)|\leq (\ell-\frac32)|N_1(u)|$ and so
\begin{eqnarray}\label{align15}
 e\big(N_1(u)\big) &\le&  e\big(N_1'(u)\big)+|N_1(u)\cap V'||N_1(u)|
 \le \Big(\ell-\frac32+2\ell(t-1)\Big)|N_1(u)|\nonumber\\
 &\le& \Big(2\lambda-\frac32\Big)|N_1(u)|,
\end{eqnarray}
as $\ell\geq2$ and $\lambda=\ell t-1$.
Furthermore, let $W_0$ be an arbitrary subset of $V(G)$.
Then, the bipartite subgraph $G[N_1'(u),N_2'(u)\cap W_0]$ is $P_{2\ell+1}$-free
(otherwise, we can find a $P_{2\ell-1}$ with both endpoints in $N_1'(u)$ and thus a $C_{2\ell}$ in $G'$).
By Lemma \ref{lemma2.5},
$e(N_1'(u),N_2'(u)\cap W_0)\le (\ell-\frac12)(|N_1(u)|+|N_2(u)\cap W_0|).$
Since both $N_1(u)\setminus N_1'(u)$
and $(N_2(u)\cap W_0)\setminus (N_2'(u)\cap W_0)$ are subsets of $V'$,
we have
\begin{eqnarray}\label{align16}
e\big(N_1(u),N_2(u)\cap W_0\big)
&\le& e\big(N_1'(u),N_2'(u)\cap W_0\big)+|V'|(|N_1(u)|+|N_2(u)\cap W_0|)\nonumber\\
&\le& \Big(\ell-\frac12+2\ell(t-1)\Big)(|N_1(u)|+|N_2(u)\cap W_0|)\nonumber\\
&\le& \Big(2\lambda-\frac12\Big)\big(|N_1(u)|+|N_2(u)\cap W_0|\big).
\end{eqnarray}

\begin{lem}\label{lem5.4}
$\sqrt{\lambda n}\le\rho(G)\leq\sqrt{6\lambda n}.$
\end{lem}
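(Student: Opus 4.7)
The plan is to establish the two bounds separately.

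For the lower bound, I will argue that $S_{n,\lambda}^+$ is itself $tC_{2\ell}$-free, so the extremality of $G$ combined with Lemma \ref{lem5.1}(i) immediately gives $\rho(G) \ge \rho(S_{n,\lambda}^+) \ge \sqrt{\lambda n}$. The freeness check is a short clique-counting argument: in any $2\ell$-cycle of $S_{n,\lambda}^+$, two consecutive independent-set vertices along the cycle can occur only via the single added edge, so a cycle using $c$ clique vertices and $s = 2\ell - c$ independent-set vertices must satisfy $c \ge s - 1$, forcing $c \ge \ell$. Thus $t$ vertex-disjoint copies would consume at least $\ell t > \lambda$ clique vertices, which is impossible.

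For the upper bound, I will apply a standard two-step walk count using the Perron eigenvector $X$ with $x_{u^*} = \max_i x_i$. Iterating $\rho(G)\,x_v = \sum_{w \sim v} x_w$ yields
$$\rho(G)^2\,x_{u^*} \;=\; \sum_{v \in N_1(u^*)} \sum_{w \sim v} x_w.$$
Splitting the right-hand side according to whether $w = u^*$, $w \in N_1(u^*)$, or $w \in N_2(u^*)$ (these are the only possibilities, since any neighbor of $v \in N_1(u^*)$ is within distance $2$ of $u^*$), and replacing each $x_w$ by $x_{u^*}$, reduces the task to
$$\rho(G)^2 \;\le\; d(u^*) + 2\,e\bigl(N_1(u^*)\bigr) + e\bigl(N_1(u^*),\,N_2(u^*)\bigr).$$
Now I can feed in \eqref{align15} and \eqref{align16} (the latter with $W_0 = V(G)$), both of which came out of applying Lemma \ref{lem5.3} to $u^*$: these yield $2\,e(N_1(u^*)) \le (4\lambda - 3)\,|N_1(u^*)|$ and $e(N_1(u^*), N_2(u^*)) \le (2\lambda - \tfrac{1}{2})\bigl(|N_1(u^*)| + |N_2(u^*)|\bigr)$. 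Combining these with $d(u^*) = |N_1(u^*)|$ and $|N_1(u^*)| + |N_2(u^*)| \le n - 1$ collapses the sum to $(6\lambda - \tfrac{5}{2})(n-1) \le 6\lambda n$, so $\rho(G) \le \sqrt{6\lambda n}$.

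There is no substantive obstacle in this lemma itself: the real work has already been done in Lemma \ref{lem5.3} (the property that $G - \{u\}$ still contains $t-1$ disjoint $2\ell$-cycles) and in the Erd\H{o}s--Gallai-type estimates that feed into \eqref{align15} and \eqref{align16}. The argument here is essentially packaging a two-step walk count; the constant $6$ is far from optimal, but any bound of order $O(\sqrt{\lambda n})$ suffices for the later analysis.
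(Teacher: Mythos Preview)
Your proof is correct and follows essentially the same route as the paper's own argument: the lower bound via $\rho(G)\ge\rho(S_{n,\lambda}^+)\ge\sqrt{\lambda n}$ and the upper bound via the two-step walk identity $\rho(G)^2 x_{u^*}=\sum_{v\in N_1(u^*)}\sum_{w\sim v}x_w$ combined with \eqref{align15} and \eqref{align16}. The only difference is that you supply an explicit clique-counting justification for why $S_{n,\lambda}^+$ is $tC_{2\ell}$-free, which the paper simply asserts; your argument that each $2\ell$-cycle must use at least $\ell$ clique vertices is correct and a welcome addition.
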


\begin{proof}
Recall that $S_{n,\lambda}^{+}$ is $tC_{2\ell}$-free.
Then $\rho(G)\geq\rho(S_{n,\lambda}^{+})$, and
the lower bound follows from Lemma \ref{lem5.1} (i).
We then prove the upper bound.
Note that
$$\rho^2(G)x_{u^*}=\!\!\sum_{u\in N_1(u^*)}\sum_{w\in N_1(u)}\!\!x_w
\le |N_1(u^*)|x_{u^*}+2e\big(N_1(u^*)\big)x_{u^*}+e\big(N_1(u^*),N_2(u^*)\big)x_{u^*}.$$
Setting $u=u^*$ and $W_0=N_2(u^*)$ in (\ref{align15}) and (\ref{align16}), we obtain
$\rho^2(G)\leq\big(6\lambda-\frac52\big)n\leq 6\lambda n.$
\end{proof}

In \cite{Nikiforov6}, Nikiforov studied an extremal problem on degree power, which is
an extension of Tur\'{a}n's problem. He showed that $\sum_{u\in V(H)}d_H^2(u)\le 2(\ell-1)e(H)+(\ell-1)(|H|-1)|H|$ for every $C_{2\ell}$-free graph $H$.
Inspired by this result, we obtain the following one on
$tC_{2\ell}$-free graphs.

\begin{lem}\label{lem5.5}
We have $e(G)\le \ell n^{1+\frac{1}{\ell}}$ and
$\sum_{v\in V(G)}d_G^2(v)<2\lambda n^2$.
\end{lem}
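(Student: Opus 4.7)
The plan is to prove the two bounds in turn, using the first to control the error term in the second.

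For $e(G) \le \ell n^{1+1/\ell}$, I would invoke Lemma \ref{lem5.3}: fix any $u \in V(G)$, and let $V'$ be the vertex set of the $t-1$ disjoint $2\ell$-cycles in $G - \{u\}$ that the lemma supplies, so $|V'| = 2\ell(t-1)$ and $G - V'$ is $C_{2\ell}$-free on $n - 2\ell(t-1)$ vertices. Lemma \ref{lemma2.6} then gives $e(G - V') \le (\ell-1) n^{1+1/\ell} + O(n)$, and since the edges incident to $V'$ contribute at most $|V'|\,n = O(n)$, one obtains $e(G) \le (\ell-1) n^{1+1/\ell} + O(n) \le \ell n^{1+1/\ell}$ once $n$ is large. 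This half is essentially routine.

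For the degree-squared bound my plan is a double counting based on the identity
\[
\sum_{v \in V(G)} d_G^2(v) \;=\; \sum_{u \in V(G)} \sum_{w \in N_1(u)} d_G(w),
\]
obtained by switching the order of summation. Fixing $u$ and noting that every $w \in N_1(u)$ has its entire neighborhood inside $\{u\} \cup N_1(u) \cup N_2(u)$, the inner sum decomposes as
\[
\sum_{w \in N_1(u)} d_G(w) \;=\; |N_1(u)| + 2\,e(N_1(u)) + e(N_1(u), N_2(u)).
\]
The two structural estimates already in the excerpt take care of the last two terms: inequality \eqref{align15} gives $e(N_1(u)) \le (2\lambda - 3/2)|N_1(u)|$, and inequality \eqref{align16} taken with $W_0 = V(G)$ gives $e(N_1(u), N_2(u)) \le (2\lambda - 1/2)(|N_1(u)| + |N_2(u)|)$. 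Combining these yields the per-vertex bound $(6\lambda - 5/2)|N_1(u)| + (2\lambda - 1/2)|N_2(u)|$.

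Summing over all $u$ and using the elementary identities $\sum_u |N_1(u)| = 2 e(G)$ and $\sum_u |N_2(u)| \le n^2$, I arrive at
\[
\sum_{v \in V(G)} d_G^2(v) \;\le\; (12\lambda - 5)\,e(G) + \bigl(2\lambda - \tfrac{1}{2}\bigr) n^2.
\]
Plugging in the first part of the lemma, $e(G) = O(n^{1+1/\ell})$, which is $o(n^2)$ since $\ell \ge 2$, makes the first summand negligible compared to $\tfrac{1}{2} n^2$ for $n$ large, so the total is strictly less than $2\lambda n^2$, as required. The only real subtlety — rather than a genuine obstacle — is observing that \eqref{align15} and \eqref{align16} were derived in the excerpt with one fixed vertex $u$ and one fixed $V'$, but Lemma \ref{lem5.3} produces a suitable $V'$ for \emph{every} vertex independently, so these inequalities in fact hold uniformly across $u \in V(G)$ and may be summed as above.
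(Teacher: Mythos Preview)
Your proposal is correct and follows essentially the same approach as the paper: the edge bound via Lemma~\ref{lem5.3} and Lemma~\ref{lemma2.6}, and the degree-squared bound via the identity $\sum_v d_G^2(v)=\sum_u\sum_{w\in N_1(u)}d_G(w)$ combined with \eqref{align15} and \eqref{align16}. The only cosmetic difference is that the paper replaces $|N_1(u)|+|N_2(u)|$ by $n$ \emph{before} summing over $u$, obtaining $(8\lambda-4)e(G)+(2\lambda-\tfrac12)n^2$ rather than your $(12\lambda-5)e(G)+(2\lambda-\tfrac12)n^2$; since $e(G)=o(n^2)$ either way, this is immaterial.
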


\begin{proof}
From the above definition of $G'$,
we know that $|G'|=n-2\ell(t-1)$ and $G'$ is $C_{2\ell}$-free.
By Lemma \ref{lemma2.6}, we have
$$e(G')\le ex\big(n-2\ell(t-1),C_{2\ell}\big)\le (\ell-1)(n-2\ell(t-1))^{1+\frac{1}{\ell}}+16(\ell-1)n.$$
It follows that
\begin{align}\label{align17}
e(G)\le e(G')+\sum_{v\in V'}d_{G}(v)\le e(G')+2\ell(t-1)n
\le \ell n^{1+\frac{1}{\ell}}.
\end{align}
Hence, the first statement holds. For an arbitrary vertex $u\in V(G)$,
$$\sum_{v\in N_1(u)}d_G(v)=|N_1(u)|+2e\big(N_1(u)\big)+e\big(N_1(u),N_2(u)\big).$$
Combining with \eqref{align15} and \eqref{align16},
where $W_0$ is defined as $N_2(u)$, we get that
$$\sum_{v\in N_1(u)}d_G(v)<(4\lambda-2)|N_1(u)|+\Big(2\lambda-\frac12\Big)n.$$
Summing the above inequality over all vertices $u\in V(G)$ and using \eqref{align17},
we obtain
\begin{eqnarray*}
\sum_{u\in V(G)}\sum_{v\in N_1(u)}d_G(v)
&<& (4\lambda-2)\sum_{u\in V(G)}d_G(u)+ \Big(2\lambda-\frac12\Big)n^2 \nonumber\\
&=&(8\lambda-4)e(G)+\Big(2\lambda-\frac12\Big)n^2\nonumber\\
&<& 2\lambda n^2.
\end{eqnarray*}
Observe that $\sum_{v\in V(G)}d_G^2(v)=\sum_{u\in V(G)}\sum_{v\in N_1(u)}d_G(v).$
The second statement follows.
\end{proof}

Choose a positive constant $\eta<\frac{1}{20000\lambda^5}$,
and define $W=\{u\in V(G)~|~x_u\ge \eta x_{u^*}\}.$
We shall give an upper bound for $|W|$ and a lower bound for
degrees of vertices in $W$ (see Lemmas \ref{lem5.8} and \ref{lem5.9}).
However, we are in trouble when we deal with the case $\ell=2$.
Hence, we have to prove an extra structural property
(see Lemma \ref{lem5.6}).

\begin{lem}\label{lem5.6}
For $\ell=2$, we have $\Delta(G)\ge (1-\frac{\eta}{40\lambda}) n$.
\end{lem}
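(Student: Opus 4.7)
The plan is to argue by contradiction. Suppose that $\Delta(G) < \bigl(1 - \eta/(40\lambda)\bigr)\,n$. The strategy is to estimate $\rho(G)^2 x_{u^*}$ from above by expanding the second-moment identity
\[
\rho(G)^2\, x_{u^*} \;=\; \sum_{w\in V(G)} \bigl|N_1(w)\cap N_1(u^*)\bigr|\,x_w,
\]
and then to combine the resulting estimate with the lower bound $\rho(G)^2\ge \lambda n$ of Lemma~\ref{lem5.4} to produce a contradiction.

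First I would apply Lemma \ref{lem5.3} with $u=u^*$ to obtain $t-1$ vertex-disjoint $C_4$'s in $G-\{u^*\}$; denote their joint vertex set by $V'$, so $|V'|=4(t-1)$ and $u^*\in V(G')$ where $G':=G-V'$. The decisive consequence of $\ell=2$ is that $G'$ is $C_4$-free, so for every $w\in V(G')\setminus\{u^*\}$ one has the strong codegree bound $|N_{G'}(w)\cap N_{G'}(u^*)|\le 1$, hence $|N_G(w)\cap N_G(u^*)|\le 1+|V'|=4t-3$. By contrast, for $w\in V'$ only the trivial bound $|N_G(w)\cap N_G(u^*)|\le d(u^*)\le \Delta$ is available, which is why the set $V'$ must be handled separately.

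Next, I would split the right-hand sum into the four blocks $w=u^*$, $w\in W\cap(V(G')\setminus\{u^*\})$, $w\in V(G')\setminus(W\cup\{u^*\})$, and $w\in V'$, using the codegree bounds above together with $x_w\le x_{u^*}$ for $w\in W$ and $x_w<\eta\, x_{u^*}$ for $w\notin W$. This should produce an inequality of the shape
\[
\rho(G)^2 \;\le\; d(u^*) + (4t-3)\,|W| + (4t-3)\,\eta n + |V'\cap W|\cdot \Delta + 4(t-1)\,\eta\,\Delta.
\]
Plugging in $d(u^*)\le\Delta<\bigl(1-\eta/(40\lambda)\bigr)\,n$, $|V'\cap W|\le 4(t-1)$, and using $\eta<1/(20000\lambda^5)$, the resulting upper bound on $\rho(G)^2$ should fall strictly below $\lambda n$, contradicting Lemma~\ref{lem5.4} and completing the proof.

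The main obstacle will be controlling the term $|V'\cap W|\cdot\Delta$, whose worst case is of the same order as the target $\lambda n$ and, if not tamed, swamps the argument. I expect to address this in one of two ways: either by refining the choice of $V'$ through a more careful iteration of Lemma~\ref{lem5.3}, constructing the $t-1$ disjoint $C_4$'s inside $G-W$ whenever possible so that $V'\cap W$ is empty (or at least much smaller than $|V'|$); or by extracting an auxiliary upper bound on $|W|$ itself from Lemma~\ref{lem5.5}, using the inequality $d_G(w)\ge \eta\rho(G)$ valid for each $w\in W$ to bound $|W|\cdot\eta^2\lambda n\le \sum_v d_G(v)^2<2\lambda n^2$ and thereby absorb the troublesome term into the small error carried by $\eta$.
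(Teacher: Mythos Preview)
Your approach has a genuine gap that neither proposed fix closes. The term $|V'\cap W|\cdot\Delta$ can be as large as $4(t-1)\Delta$, which for $t\ge 2$ already exceeds the target $\lambda n=(2t-1)n$ even before you add $d(u^*)$. Fix~(b) via Lemma~\ref{lem5.5} only yields $|W|\le 2n/\eta^2$, a quantity \emph{larger} than $n$ (since $\eta<1$), so it controls neither $(4t-3)|W|$ nor the bad term; any sharper bound on $|W|$ (such as Lemma~\ref{lem5.8}) would be circular, because Lemma~\ref{lem5.8} relies on Lemma~\ref{lem5.7}, whose $\ell=2$ case in turn invokes Lemma~\ref{lem5.6}. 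Fix~(a) asks for $t-1$ disjoint $C_4$'s avoiding $W$, but Lemma~\ref{lem5.3} grants no such control, and even if you could arrange $V'\cap W=\varnothing$ you would still need a separate argument that $G-V'$ is $C_4$-free. There is also a warning sign in your scheme: the hypothesis $\Delta<\alpha n$ only shaves $\frac{\eta}{40\lambda}n$ off $d(u^*)$ and plays no further role, so the contradiction has no place to come from.

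The paper's proof avoids $W$ entirely. For each cycle $C^i$ it sets $M_i$ to be a pair $\{u_{ij_1},u_{ij_2}\}\subseteq V(C^i)$ with $|N(u_{ij_1})\cap N(u_{ij_2})|\ge(1-\alpha)n$ if such a pair exists, and $M_i=V(C^i)$ otherwise; then $M=\bigcup_i M_i$. This dichotomy purchases two facts simultaneously: (i) $e(M_i,V(G-M))<2\alpha n$ in either case (trivially when $|M_i|=2$; by the absence of a high-codegree pair when $|M_i|=4$, using $\alpha>\tfrac78$); and (ii) $G-M$ is $C_4$-free, because whenever $|M_i|=2$ the cycle $C^i$ can be rebuilt from the $\ge(1-\alpha)n$ common neighbors of the pair, so any $C_4$ in $G-M$ together with rebuilt copies of $C^1,\ldots,C^{t-1}$ would yield $tC_4\subseteq G$. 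Expanding $\rho(G)^2x_{u^*}$ over walks of length two from $u^*$ and using (i)--(ii) then gives $\rho(G)^2<\bigl(3+(2t-4)\alpha\bigr)n\le(2t-1)n=\lambda n$, contradicting Lemma~\ref{lem5.4}. Note that the assumption $\Delta<\alpha n$ enters precisely in step~(i), which is where the contradiction is generated.
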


\begin{proof}
Set $\alpha=1-\frac{\eta}{40\lambda}$
and suppose to the contrary that $\Delta(G)<\alpha n$.
Specially, we have $d_G(u^*)<\alpha n$.
By Lemma \ref{lem5.3},
$G-\{u^*\}$ contains $t-1$ disjoint quadrilaterals
$C^1,\dots,C^{t-1}$.
Given an arbitrary $i\in \{1,2,\dots,t-1\}$,
we assume that $V(C^i)=\{u_{ij}~|~j=1,2,3,4\}$.
We then define $M_i=\{u_{ij_1},u_{ij_2}\}$
if there exist two distinct vertices $u_{ij_1},u_{ij_2}\in V(C^i)$ with $|N_1(u_{ij_1})\cap N_1(u_{ij_2})|\geq(1-\alpha)n$,
and $M_i=V(C^i)$ otherwise.
Furthermore, let $M=\cup_{i=1}^{t-1}M_i$ and $G'=G-M$.

If $M_i=\{u_{ij_1},u_{ij_2}\}$ for some $u_{ij_1},u_{ij_2}\in V(C^i)$,
then $e\big(M_i,V(G')\big)\le d_G(u_{ij_1})+d_G(u_{ij_2})<2\alpha n$.
If $M_i=V(C^i)$, then
 $$e\big(M_i,V(G')\big)\le\Big|\cup_{j=1}^{4}N_1(u_{ij})\Big|+\!\!\sum_{1\le j_1<j_2\le 4}\Big|N_1(u_{ij_1})\cap N_1(u_{ij_2})\Big|<
 n+6(1-\alpha)n<2\alpha n,$$
which the last inequality follows from $a>\frac78.$
Hence, we always have
\begin{eqnarray}\label{align18}
e\big(M,V(G')\big)=
\sum_{i=1}^{t-1}e\big(M_i,V(G')\big)<2(t-1)\alpha n.
\end{eqnarray}

Recall that $G'=G-M=G-\cup_{i=1}^{t-1}M_i$.
We will see that $G'$ is $C_4$-free.
Otherwise, let ${\widetilde{C}}^t$ be a 4-cycle in $G'$.
If $M_1=V(C^1)$, then we define a 4-cycle ${\widetilde{C}}^1=C^1$,
where $V({\widetilde{C}}^1)\cap V({\widetilde{C}}^t)=\varnothing$ obiviously.
If $M_1=\{u_{1j_1},u_{1j_2}\}$ for some $u_{1j_1},u_{1j_2}\in V(C^1)$,
then $|N_1(u_{1j_1})\cap N_1(u_{1j_2})|\ge (1-\alpha)n$,
and thus there exists a 4-cycle ${\widetilde{C}}^1=u_{1j_1}v_1u_{1j_2}w_1u_{1j_1}$
such that $v_1,w_1\notin M\cup V({\widetilde{C}}^t).$
Similarly, if $M_2=V(C^2)$, then we define ${\widetilde{C}}^2=C^2$;
otherwise, $M_2=\{u_{2j_1},u_{2j_2}\}$, then we can find a 4-cycle ${\widetilde{C}}^2=u_{2j_1}v_2u_{2j_2}w_2u_{2j_1}$ such that
$v_2,w_2\notin M\cup V({\widetilde{C}}^1)\cup V({\widetilde{C}}^t).$
Repeating the above steps, we obtain a sequence of disjoint 4-cycles ${\widetilde{C}}^1,\cdots,{\widetilde{C}}^{t-1}$ such that
$V({\widetilde{C}}^i)\cap V(\widetilde{C}^{t})=\varnothing$ for $1\leq i\leq t-1$.
Consequently, we find $t$ disjoint 4-cycles in $G$, a contradiction.
Therefore, $G'$ is $C_4$-free.

We know that
\begin{eqnarray}\label{align19}
\rho^2(G)x_{u^*}=\sum_{v\in N_1(u^*)}\sum_{w\in N_1(v)}x_w.
\end{eqnarray}
In the following, we distinguish (\ref{align19})
into the following three cases.

{\bf Case (i) $v,w\in V(G')$.}
Let $\widetilde{N}_i(u^*)$ be the set of vertices at distance $i$ from $u^*$ in $G'$.
Then $\widetilde{N}_i(u^*)\subseteq N_i(u^*)$ for $i\in\{1,2\}$
as $G'$ is an induced subgraph of $G$.
We evaluate the following term.
$$\sum_{v\in \widetilde{N}_1(u^*)}
\sum_{w\in \widetilde{N}_1(v)}x_w=
\sum_{v\in \widetilde{N}_1(u^*)}
\Big(x_{u^*}+\!\!\!\!\sum_{w\in \widetilde{N}_1(v)\cap\widetilde{N}_1(u^*)}x_w
+\!\!\!\!\sum_{w\in \widetilde{N}_1(v)\cap\widetilde{N}_2(u^*)}x_w\Big).$$
Since $G'$ is $C_4$-free,
$G'[\widetilde{N}_1(u^*)]$ is $P_3$-free, that is, $\Delta\big(G'[\widetilde{N}_1(u^*)]\big)\leq1$.
Thus
$$\sum_{v\in \widetilde{N}_1(u^*)}\sum_{w\in \widetilde{N}_1(v)\cap\widetilde{N}_1(u^*)}x_w
\leq\sum_{v\in \widetilde{N}_1(u^*)}x_v
\leq\sum_{v\in N_1(u^*)}x_v=\rho(G)x_{u^*}.$$
Moreover, any two vertices in $\widetilde{N}_1(u^*)$ have no common neighbors in $\widetilde{N}_2(u^*)$,
which implies that
$e\big(\widetilde{N}_1(u^*),\widetilde{N}_2(u^*)\big)=|\widetilde{N}_2(u^*)|$.
It follows that
$$ \sum_{v\in \widetilde{N}_1(u^*)}\sum_{w\in \widetilde{N}_1(v)\cap\widetilde{N}_2(u^*)}x_w\leq
e\big(\widetilde{N}_1(u^*),\widetilde{N}_2(u^*)\big)x_{u^*}
=|\widetilde{N}_2(u^*)|x_{u^*}.$$
Combining above three inequalities, we obtain
\begin{eqnarray}\label{align20}
\sum_{v\in \widetilde{N}_1(u^*)}
\sum_{w\in \widetilde{N}_1(v)}x_w\leq
\big(|\widetilde{N}_1(u^*)|+\rho(G)+|\widetilde{N}_2(u^*)|\big)x_{u^*}
<(\rho(G)+n-|M|\big)x_{u^*}.
\end{eqnarray}

{\bf Case (ii) $v\in V(G)$ and $w\in M$.}
We shall evaluate
$\sum_{v\in N_1(u^*)}\sum_{w\in N_1(v)\cap M}x_w.$
Note that $N_1(u^*)\setminus M=\widetilde{N}_1(u^*).$
On the one hand, $\sum_{v\in N_1(u^*)\setminus M}\sum_{w\in N_1(v)\cap M}x_w
\leq e\big(\widetilde{N}_1(u^*),M\big)x_{u^*}.$
On the other hand,
\begin{eqnarray*}
\sum_{v\in N_1(u^*)\cap M}\sum_{w\in N_1(v)\cap M}x_w
\le 2e(M)x_{u^*}\le 2\binom{|M|}{2}x_{u^*}<
(1-\alpha)n x_{u^*}.
\end{eqnarray*}
Thus we have
\begin{eqnarray}\label{align21}
\sum_{v\in N_1(u^*)}\sum_{w\in N_1(v)\cap M}x_w
<\Big(e\big(\widetilde{N}_1(u^*),M\big)+(1-\alpha)n\Big)x_{u^*}.
\end{eqnarray}

{\bf Case (iii) $v\in M$ and $w\in V(G')$.}
We shall calculate
$\sum_{v\in N_1(u^*)\cap M}\sum_{w\in N_1(v)\cap V(G')}x_w.$
Now set $\widetilde{N}_{2^+}(u^*)=V(G')
\setminus\big(\{u^*\}\cup\widetilde{N}_1(u^*)\big).$
We can observe that
\begin{eqnarray}\label{align22}
\sum_{v\in N_1(u^*)\cap M}\sum_{w\in N_1(v)\cap V(G')}x_w
&=&\sum_{v\in N_1(u^*)\cap M}
\Big(x_{u^*}+\!\!\!\!\sum_{w\in N_1(v)\cap\widetilde{N}_1(u^*)}\!\!\!\!x_w
+\!\!\!\!\sum_{w\in N_1(v)\cap\widetilde{N}_{2^+}(u^*)}\!\!\!\!x_w\Big)\nonumber\\
&\leq& \sum_{v\in M}
\Big(x_{u^*}+\!\!\!\!\sum_{w\in\widetilde{N}_1(u^*)}\!\!\!\!x_w
+\!\!\!\!\sum_{w\in N_1(v)\cap\widetilde{N}_{2^+}(u^*)}\!\!\!\!x_w\Big)\nonumber\\
&\leq& |M|\big(1+\rho(G)\big)x_{u^*}+e\big(\widetilde{N}_{2^+}(u^*),M\big)x_{u^*}.
\end{eqnarray}
Summing (\ref{align20}), (\ref{align21}) and (\ref{align22}) into
(\ref{align19}),
we obtain
$$\rho^2(G)<(|M|+1)\rho(G)+(2-\alpha)n+e\big(V(G'),M\big).$$
Note that $|M|=4(t-1)$, and by Lemma \ref{lem5.4},
$\rho(G)\leq\sqrt{6\lambda n}$. Thus,
$(|M|+1)\rho(G)\leq(4t-3)\sqrt{6\lambda n}<(1-\alpha)n$.
Moreover, $e\big(V(G'),M\big)<2(t-1)\alpha n$ by (\ref{align18}).
Consequently, $$\rho^2(G)<\big(3+(2t-4)\alpha\big)n\leq(2t-1)n,$$
as $t\geq2$ and $\alpha=1-\frac{\eta}{40\lambda}<1$.
However, by Lemma \ref{lem5.4}
we have $\rho^2(G)\geq \lambda n=(\ell t-1)n\geq(2t-1)n,$
a contradiction,
Therefore, $\Delta(G)\ge\alpha n$, completing the proof.
\end{proof}

\begin{lem}\label{lem5.7}
Let $W'=\{u\in V(G)~|~x_u\ge \frac{\eta}5x_{u^*}\}$.
Then $|W'|\le\frac{\eta}{20\lambda}n$.
\end{lem}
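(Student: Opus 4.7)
The plan is to combine the eigenvalue equation with the edge and degree bounds from Lemmas~\ref{lem5.4}--\ref{lem5.5}, splitting into the cases $\ell\geq 3$ and $\ell=2$; the latter requires the near-universal vertex supplied by Lemma~\ref{lem5.6}. First, for any $u\in W'$ the eigenvalue equation $\rho(G)x_u=\sum_{v\in N_G(u)}x_v\leq d_G(u)x_{u^*}$ together with $x_u\geq(\eta/5)x_{u^*}$ yields the uniform lower bound $d_G(u)\geq\eta\rho(G)/5$.

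For $\ell\geq 3$, summing the degree lower bound over $W'$ gives $|W'|\cdot\eta\rho(G)/5\leq 2e(G)$. Combining $e(G)\leq\ell n^{1+1/\ell}$ from Lemma~\ref{lem5.5} with $\rho(G)\geq\sqrt{\lambda n}$ from Lemma~\ref{lem5.4} yields
\[
|W'|\leq\frac{10\ell\,n^{1/2+1/\ell}}{\eta\sqrt{\lambda}}.
\]
Because $1/\ell<1/2$ when $\ell\geq 3$, the right-hand side has order strictly less than $n$ and is therefore at most $\eta n/(20\lambda)$ for all sufficiently large $n$.

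The main obstacle is the case $\ell=2$, where the exponent $1/2+1/\ell$ becomes $1$ and the crude bound above is useless. To overcome this, I would invoke Lemma~\ref{lem5.6} to fix a vertex $z$ with $d_G(z)\geq(1-\eta/(40\lambda))n$ and set $D=V(G)\setminus N_G[z]$, so $|D|\leq\eta n/(40\lambda)$. Applying Lemma~\ref{lem5.3} at $z$, I extract $t-1$ disjoint $4$-cycles in $G-\{z\}$; denote their vertex set by $V'$ with $|V'|=4(t-1)$, and let $G'=G-V'$. Then $z\in V(G')$ and $G'$ is $C_4$-free. Partition
\[
W'=(W'\cap V')\sqcup(W'\cap\{z\})\sqcup(W'\cap D\cap V(G'))\sqcup(W'\cap N_G(z)\cap V(G')).
\]
The first three pieces are controlled trivially: they have sizes at most $4(t-1)$, $1$, and $\eta n/(40\lambda)$ respectively.

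The decisive piece is the fourth. Every $u$ there lies in $N_{G'}(z)$ and satisfies $d_{G'}(u)\geq d_G(u)-4(t-1)\geq\eta\rho(G)/10$ once $n$ is large. Since $G'$ is $C_4$-free, any two distinct vertices share at most one common neighbor, so double-counting yields $\sum_{v\in N_{G'}(z)}(d_{G'}(v)-1)\leq|V(G')|-1$, i.e.\ $\sum_{v\in N_{G'}(z)}d_{G'}(v)\leq 2n$. Hence $|W'\cap N_G(z)\cap V(G')|\leq 20n/(\eta\rho(G))=O(\sqrt{n/\lambda}/\eta)$, which is $o(n/\lambda)$. Summing the four pieces produces the desired bound $|W'|\leq\eta n/(20\lambda)$ for $n$ sufficiently large.
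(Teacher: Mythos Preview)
Your argument is correct. For $\ell\ge 3$ it coincides with the paper's proof. For $\ell=2$ you take a genuinely different route: you invoke Lemma~\ref{lem5.3} to peel off $t-1$ disjoint $C_4$'s at the high-degree vertex $z$, obtain a $C_4$-free graph $G'$, and then bound $|W'\cap N_G(z)\cap V(G')|$ by the classical double-count $\sum_{v\in N_{G'}(z)}(d_{G'}(v)-1)\le |V(G')|-1$. The paper instead stays purely spectral: from the eigenvector identity $\sum_{v\in N_1(v^*)}x_v=\rho(G)x_{v^*}\le\sqrt{6\lambda n}\,x_{u^*}$ it deduces that $\{v\in N_1(v^*):x_v\ge\sqrt{6\lambda}\,n^{-0.4}x_{u^*}\}$ has size at most $n^{0.9}$, and since $\eta/5>\sqrt{6\lambda}\,n^{-0.4}$ this already contains $W'\cap N_1(v^*)$. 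The paper's version is shorter and avoids appealing to Lemma~\ref{lem5.3} or the $C_4$-free structure at this stage; your version, on the other hand, is self-contained combinatorially and makes transparent why the neighborhood of a near-universal vertex cannot harbor many heavy-coordinate vertices.
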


\begin{proof}
We first consider the case $\ell\ge 3$.
By Lemma \ref{lem5.4}, $\rho(G)\geq\sqrt{\lambda n}$.
Hence,
$$\sqrt{\lambda n}\frac{\eta}5x_{u^*}\le \sqrt{\lambda n}x_u
\leq\rho(G) x_u=\sum_{v\in N_1(u)}x_v\le d_G(u)x_{u^*}$$
for each $u\in W'$.
Summing this inequality over all vertices $u\in W'$, we obtain
\begin{align}\label{align23}
|W'|\sqrt{\lambda n}\frac{\eta}5 x_{u^*}\le \sum_{u\in W'}d_G(u)x_{u^*}
\le \sum_{u\in V(G)}d_G(u)x_{u^*}
\le 2e(G)x_{u^*}.
\end{align}
Combining \eqref{align17} and \eqref{align23}, we get
$|W'|\le \frac{10\ell n^{1+\frac{1}{\ell}}}{\sqrt{\lambda n}\eta}\le \frac{\eta}{20\lambda}n$
for $n$ large enough.

Now, it remains the case $\ell=2$.
By Lemma \ref{lem5.6}, there exists a vertex $v^*\in V(G)$ with $d_G(v^*)\ge(1-\frac{\eta}{40\lambda}) n$.
Hence, $$|W'\setminus N_1(v^*)|\leq |V(G)\setminus N_1(v^*)|=n-d_G(v^*)\leq\frac{\eta}{40\lambda}n.$$
Let $W^*=\{v\in N_1(v^*)~|~x_v\ge\sqrt{6\lambda}n^{-0.4}x_{u^*}\}$.
Note that $\rho(G)\leq\sqrt{6\lambda n}$ by Lemma \ref{lem5.4}.
Thus
$$\big|W^*\big|\sqrt{6\lambda}n^{-0.4}x_{u^*}
\le\sum_{v\in N_1(v^*)}x_v=\rho(G)x_{v^*}\le\sqrt{6\lambda n}x_{u^*},$$
yielding $|W^*|\le n^{0.9}\leq\frac{\eta}{40\lambda}n$.
Since $\frac{\eta}5x_{u^*}>\sqrt{6\lambda}n^{-0.4}x_{u^*}$,
we have $W'\cap N_1(v^*)\subseteq W^*$, and so
$|W'\cap N_1(v^*)|\leq|W^*|\leq\frac{\eta}{40\lambda}n$.
Combining $|W'\setminus N_1(v^*)|\leq \frac{\eta}{40\lambda}n$
gives
$|W'|\le\frac{\eta}{20\lambda}n,$
as claimed.
\end{proof}

\begin{lem}\label{lem5.8}
$|W|\leq\frac{128\lambda^3}{\eta^2}$.
\end{lem}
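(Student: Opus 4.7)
The overall strategy is to combine the second moment bound $\sum_{v \in V(G)} d_G^2(v) < 2\lambda n^2$ from Lemma \ref{lem5.5} with a linear-in-$n$ lower bound on $d_G(u)$ for each $u \in W$. Concretely, I aim to prove $d_G(u) \geq \eta n/(8\lambda)$ for every $u \in W$, from which
\[
|W|\cdot\Big(\frac{\eta n}{8\lambda}\Big)^2 \;\leq\; \sum_{u\in W} d_G^2(u) \;\leq\; \sum_{v\in V(G)} d_G^2(v) \;<\; 2\lambda n^2
\]
rearranges immediately to the target $|W| < 128\lambda^3/\eta^2$. Note that the constant $128 = 2\lambda\cdot(8\lambda)^2/\lambda^2$ comes out exactly from this substitution.

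The lower bound on $d_G(u)$ is to be derived by applying the eigenvalue equation twice. Writing $\rho^2(G)x_u = \sum_{w\in V(G)} |N_1(u)\cap N_1(w)|\,x_w$ and splitting the sum into the contribution from $w\in W'$ and from $w\notin W'$: for $w\notin W'$ one has $x_w < \eta x_{u^*}/5$, and the resulting piece is at most $(\eta x_{u^*}/5)\sum_{v\in N_1(u)} d_G(v)$, which I would control via the Erd\H{o}s--Gallai-style bounds on $e(N_1(u))$ and $e(N_1(u),N_2(u))$ that were established from Lemma \ref{lem5.3} in the proof of Lemma \ref{lem5.4}. For the piece with $w\in W'$ one uses Lemma \ref{lem5.7} on $|W'|$. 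Combining with $x_u\geq\eta x_{u^*}$ and the lower bound $\rho^2(G)\geq\lambda n$ from Lemma \ref{lem5.4}, after rearrangement one obtains $d_G(u)\geq \eta n/(8\lambda)$ for every $u\in W$.

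The main obstacle is the arithmetic book-keeping in step 1: the naive substitution $|N_1(v)\cap W'|\leq|W'|\leq\eta n/(20\lambda)$ for each $v\in N_1(u)$ puts a term linear in $n$ in the denominator of the resulting lower bound on $d_G(u)$, which would collapse it to a mere constant and in turn only give a polynomial-in-$n$ bound on $|W|$. To obtain the constant bound $128\lambda^3/\eta^2$, one must reorganize the inequalities so that the linear-in-$n$ contributions from the two halves of the split cancel cleanly, leaving a net coefficient of order $\eta/\lambda$ on $d_G(u)$. This delicate balancing of the two bounds is precisely what forces the exponent structure $\lambda^3/\eta^2$ in the final constant.
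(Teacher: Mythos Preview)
Your plan is correct and matches the paper's approach: the paper also proves $d_G(u) > \frac{\eta n}{8\lambda}$ for every $u\in W$ and then feeds this into Lemma~\ref{lem5.5} exactly as you describe. On the obstacle you flag, the resolution is not a cancellation between the two halves of the split but rather applying the bound \eqref{align16} with $W_0 = W'$, which yields $e\big(N_1(u), N_2(u)\cap W'\big) \le (2\lambda-\tfrac12)\big(|N_1(u)| + |W'|\big)$ and thus replaces the dangerous product $d_G(u)\cdot|W'|$ by the harmless additive term $O(\lambda)\big(d_G(u)+|W'|\big)$. The paper further streamlines the arithmetic by arguing by contradiction: assuming some $\widetilde u\in W$ has $d_G(\widetilde u)\le \frac{\eta n}{8\lambda}$ makes $|N_1(\widetilde u)|$ itself of order $\eta n/\lambda$, so every contribution to $\rho^2(G)x_{\widetilde u}$ becomes at most $c_i\,\eta n\, x_{u^*}$ with $\sum_i c_i<\lambda$, contradicting $\rho^2(G)x_{\widetilde u}\ge \eta\lambda n\, x_{u^*}$.
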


\begin{proof}
We first prove that $d_G(u)>\frac{\eta}{8\lambda}n$ for each $u\in W$.
Suppose to the contrary that there exists a vertex $\widetilde{u}\in W$ with $d_G(\widetilde{u})\leq\frac{\eta}{8\lambda}n$.
Then $x_{\widetilde{u}}\ge \eta x_{u^*}$ as $\widetilde{u}\in W$,
and by Lemma \ref{lem5.4} $\rho(G)\ge\sqrt{\lambda n}$.
Thus we have
\begin{align}\label{align24}
\eta\lambda n x_{u^*}\le\rho^2(G) x_{\widetilde{u}}= |N_1(\widetilde{u})|x_{\widetilde{u}}+\sum_{u\in N_1(\widetilde{u})}\!\!\!d_{N_1(\widetilde{u})}(u)x_u
+\!\!\!\sum_{u\in N_2(\widetilde{u})}\!\!\!d_{N_1(\widetilde{u})}(u)x_u.
\end{align}
In view of \eqref{align15},
we have $e(N_1(\widetilde{u}))\leq(2\lambda-\frac32)d_G(\widetilde{u})$.
Note that $|N_1(\widetilde{u})|\leq\frac{\eta}{8\lambda}n$. Thus,
$$|N_1(\widetilde{u})|x_{\widetilde{u}}+\!\!\!\sum_{u\in N_1(\widetilde{u})}\!\!\!d_{N_1(\widetilde{u})}(u)x_u
\leq \big(|N_1(\widetilde{u})|+2e(N_1(\widetilde{u}))\big)x_{u^*}
\leq(4\lambda-2)|N_1(\widetilde{u})|x_{u^*}\leq \frac{1}{2}\eta n x_{u^*}.$$
Combining the above inequality with \eqref{align24}, we obtain
\begin{eqnarray}\label{align25}
\sum_{u\in N_2(\widetilde{u})}d_{N_1(\widetilde{u})}(u)x_u
\ge(\lambda-\frac{1}{2})\eta n x_{u^*}.
\end{eqnarray}
Now, setting $u=\widetilde{u}$ and $W_0=W'$ in (\ref{align16}),
we have
$$e\big(N_1(\widetilde{u}), N_2(\widetilde{u})\cap W'\big)
\leq\big(2\lambda-\frac12\big)\big(|N_1(\widetilde{u})|
+|N_2(\widetilde{u})\cap W'|\big)\leq 2\lambda\big(|N_1(\widetilde{u})|
+|W'|\big).$$
Since $|N_1(\widetilde{u})|\leq\frac{\eta}{8\lambda}n$,
and $|W'|\le\frac{\eta}{20\lambda}n$ by Lemma \ref{lem5.7},
it follows that
\begin{eqnarray}\label{align26}
\sum_{u\in N_2(\widetilde{u})\cap W'}d_{N_1(\widetilde{u})}(u)x_u
\leq e\big(N_1(\widetilde{u}), N_2(\widetilde{u})\cap W'\big)x_{u^*}
\leq\frac{7}{20}\eta nx_{u^*}.
\end{eqnarray}
Note that $x_u<\frac{\eta}{5}x_{u^*}$ for each $u\in V(G)\setminus W'$.
Setting $u=\widetilde{u}$ and $W_0=V(G)\setminus W'$ in (\ref{align16}),
we get $e\big(N_1(\widetilde{u}),N_2(\widetilde{u})\setminus W'\big)\leq
(2\lambda-\frac12)n.$ Consequently,
$$\sum_{u\in N_2(\widetilde{u})\setminus W'}d_{N_1(\widetilde{u})}(u)x_u
\leq e\big(N_1(\widetilde{u}),N_2(\widetilde{u})\setminus W'\big)\frac{\eta}{5}x_{u^*}
\leq(2\lambda-\frac12)n\frac{\eta}{5}x_{u^*}.$$
Combining \eqref{align26} gives
$$\sum_{u\in N_2(\widetilde{u})}d_{N_1(\widetilde{u})}(u)x_u
\leq\Big(\frac{7}{20}+\frac{4\lambda-1}{10}\Big)\eta nx_{u^*}
<(\lambda-\frac12)\eta n x_{u^*}$$
as $\lambda=\ell t-1>\frac54$,
contradicting \eqref{align25}.
Therefore, $d_G(u)>\frac{\eta}{8\lambda}n$ for each $u\in W$.
It follows that $\sum_{u\in V(G)}d_G^2(u)\geq\sum_{u\in W}d_{G}^2(u)\geq |W|\big(\frac{\eta}{8\lambda}n\big)^2$.
Moreover, $\sum_{u\in V(G)}d_G^2(u)<2\lambda n^2$ by Lemma \ref{lem5.5}.
Thus, $|W|\leq\frac{128\lambda^3}{\eta^2}$,
as claimed.
\end{proof}

\begin{lem}\label{lem5.9}
For each $u\in W$,
we have $d_G(u)\ge\big(\frac{x_u}{x_{u^*}}-20\eta\big)n$.
\end{lem}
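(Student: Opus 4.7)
Plan: Suppose, for contradiction, that some $u\in W$ satisfies $d_G(u)<\big(\tfrac{x_u}{x_{u^*}}-20\eta\big)n$. The inequality is vacuous when $x_u/x_{u^*}\le 20\eta+\eta/(8\lambda)$, because Lemma~\ref{lem5.8} already delivers $d_G(u)>\eta n/(8\lambda)$. So we may assume $x_u/x_{u^*}$ is bounded away from $0$ by an absolute constant depending only on $\lambda$ and $\eta$, and proceed by a local switching at $u$ analogous to that used in Lemmas~\ref{lemma4.10} and~\ref{lemma4.11}.

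Let $V^\circ$ denote the vertex set of $t-1$ pairwise-disjoint $2\ell$-cycles in $G-\{u\}$, which exists by Lemma~\ref{lem5.3}, and let $Z$ be a small auxiliary set (of size depending only on $\lambda$ and $t$) used to block obstructing cycles through $u$. Define $G^\circ$ by deleting every edge incident to $u$ and adding every edge from $u$ to $S:=V(G)\setminus(\{u\}\cup N_1(u)\cup V^\circ\cup Z)$; our standing hypothesis on $d_G(u)$ makes $|S|$ linear in $n$.

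For the spectral gain, Rayleigh's inequality gives
\[\rho(G^\circ)-\rho(G) \;\ge\; 2x_u\Bigl(\sum_{v\in S}x_v \;-\; \rho(G)\,x_u\Bigr).\]
Using the eigenequation $\rho(G)x_{u^*}=\sum_{v\in N_1(u^*)}x_v$ to get $\sum_v x_v\ge(\rho(G)+1)x_{u^*}$, and splitting $V(G)\setminus N_1(u)$ by $W'$ (with $|W'|\le \eta n/(20\lambda)$ from Lemma~\ref{lem5.7} and $x_v<\eta x_{u^*}/5$ outside $W'$), one estimates that $\sum_{v\in S}x_v$ exceeds $\rho(G)x_u$ by a margin linear in $nx_{u^*}$, leaving ample room even after subtracting the bounded contribution from $V^\circ\cup Z$. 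Hence $\rho(G^\circ)>\rho(G)$.

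The hardest step is verifying that $G^\circ$ remains $tC_{2\ell}$-free. Any collection of $t$ vertex-disjoint $2\ell$-cycles in $G^\circ$ must include a cycle $C$ through $u$, and both neighbors of $u$ on $C$ lie in $S$. The structural bounds~\eqref{align15}--\eqref{align16}, combined with a common-neighbor argument in the spirit of Lemma~\ref{lem5.6}, let one substitute $u$ in $C$ by a vertex of $V(G)\setminus(V^\circ\cup Z)$, yielding $t$ pairwise-disjoint $2\ell$-cycles inside $G$ and contradicting its $tC_{2\ell}$-freeness. The main technical difficulty is the careful choice of $Z$: it must be small enough to preserve the spectral gain yet rich enough to prohibit every obstructing $2\ell$-cycle through $u$ in $G^\circ$. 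Once both requirements are met, $\rho(G^\circ)>\rho(G)$ contradicts the extremality of $G$, and the proof is complete.
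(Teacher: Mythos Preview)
Your switching approach has a genuine gap in the spectral-gain step. You claim that $\sum_{v\in S}x_v$ exceeds $\rho(G)x_u$ by ``a margin linear in $nx_{u^*}$,'' but the total weight $\sum_{v\in V(G)}x_v$ is only of order $\rho(G)x_{u^*}\asymp\sqrt{\lambda n}\,x_{u^*}$, not of order $nx_{u^*}$. The analogy with Lemmas~\ref{lemma4.10}--\ref{lemma4.11} breaks down precisely here: in the odd-cycle case the extremal graph is close to $T_{n,2}$, so there is a large independent set $I_{\widehat{i^*}}$ carrying weight comparable to $\rho(G)x_{u^*}$ (this is \eqref{align7}). In the even-cycle case the extremal graph is close to $S_{n,\lambda}^{+}$, whose eigenvector is concentrated on the $\lambda$ dominating vertices; almost all of $\sum_v x_v$ already sits inside $N_1(u^*)$. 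Concretely, if $u\in W$ has $x_u$ close to $x_{u^*}$ (which you cannot rule out), then $\sum_{v\in N_1(u)}x_v=\rho(G)x_u$ may be nearly all of $\sum_v x_v$, leaving $\sum_{v\in S}x_v$ far too small to beat $\rho(G)x_u$. So $\rho(G^\circ)>\rho(G)$ cannot be obtained by this route. The $tC_{2\ell}$-freeness of $G^\circ$ is also glossed over: with no bipartite structure, the ``common-neighbor substitution'' you invoke has nothing to grab onto.

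The paper avoids switching altogether and argues directly from $\lambda n\,x_u\le\rho^2(G)x_u=\sum_{v\in N_1(u)}\sum_{w\in N_1(v)}x_w$. Writing $W_i=N_i(u)\cap W$ and $\overline{W_i}=N_i(u)\setminus W$, the bounds \eqref{align15}--\eqref{align16} together with $|W|=O(1)$ (Lemma~\ref{lem5.8}) and $x_w<\eta x_{u^*}$ off $W$ reduce everything to controlling $e(\overline{W_1},W_1\cup W_2)$. The key idea you are missing is a $K_{\lambda+1,\lambda+1}$-freeness count: only $O(1)$ vertices of $\overline{W_1}$ can have $\ge\lambda$ neighbours in $W_1\cup W_2$ (else one finds $\lambda+1$ vertices in $W$ with $\lambda+1$ common neighbours in $\overline{W_1}\subseteq N_1(u)$, yielding $K_{\lambda+1,\lambda+1}\supseteq tC_{2\ell}$). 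This gives $e(\overline{W_1},W_1\cup W_2)\le(\lambda-1)|N_1(u)|+O(1)$, and plugging back one obtains $\lambda n\,x_u\le\big(20\lambda\eta n+\lambda|N_1(u)|\big)x_{u^*}$, i.e.\ $d_G(u)\ge(\tfrac{x_u}{x_{u^*}}-20\eta)n$.
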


\begin{proof}
Let $u$ be an arbitrary vertex in $W$.
For convenience,
we use $W_i$ and $\overline{W_i}$
instead of $N_i(u)\cap W$ and $N_i(u)\setminus W$, respectively.
In view of \eqref{align15} and \eqref{align16}, we have
$\max\{e(N_1(u)),e(N_1(u),N_2(u))\}\leq 2\lambda n$.
Since $W_i\cup\overline{W_i}=N_i(u)$ for $i\in\{1,2\}$, we can see that
\begin{eqnarray}\label{align27}
\max\{e(\overline{W_1}),e(W_1,\overline{W_1}),
e(W_1,\overline{W_2}),e(\overline{W_1},\overline{W_2})\}
\le2\lambda n.
\end{eqnarray}
Recall that $\rho(G)\geq\sqrt{\lambda n}$. We also have
\begin{eqnarray}\label{align28}
\lambda n x_{u}\le \rho^2(G)x_{u}=\sum_{v\in N_1(u)}\sum_{w\in N_1(v)}x_w=|N_1(u)|x_u+\sum_{v\in N_1(u)}\sum_{w\in N_1(v)\setminus\{u\}}x_w.
\end{eqnarray}
Note that $N_1(u)=W_1\cup\overline{W_1}$
and for any $v\in N_1(u)$,
$$N_1(v)\setminus\{u\}=N_1(v)\cap\big(N_1(u)\cup N_2(u)\big)=N_1(v)\cap(W_1\cup \overline{W_1}\cup W_2\cup\overline{W_2}).$$

We now calculate the term
$\sum_{v\in N_1(u)}\sum_{w\in N_1(v)\setminus\{u\}}x_w$ in (\ref{align28}).
We first consider the case $v\in W_1$.
Note that $x_w\le x_{u^*}$ for $w\in W_1\cup W_2$
and $x_w\le\eta x_{u^*}$ for $w\in\overline{W_1}\cup\overline{W_2}$.
Thus,
$$\sum_{v\in W_1}\sum_{w\in N_1(v)\setminus \{u\}}\!\!\!x_w\le
\big(2e(W_1)+e(W_1,W_2)\big)x_{u^*}+\big(e(W_1,\overline{W_1})
+e(W_1,\overline{W_2})\big)\eta x_{u^*}.$$
On the one hand,
$|W|<\frac{128\lambda^3}{\eta^2}$ by Lemma \ref{lem5.8}.
Note that $W_1\cup W_2\subseteq W$. Thus,
$2e(W_1)+e(W_1,W_2)\le
2\binom{|W|}{2}\le\eta\lambda n.$
On the other hand, we have
$e(W_1,\overline{W_1})+e(W_1,\overline{W_2})
\le4\lambda n$ by \eqref{align27}.
Therefore,
\begin{eqnarray}\label{align29}
 \sum_{v\in W_1}\sum_{w\in N_1(v)\setminus \{u\}}x_w\le 5\lambda \eta n x_{u^*}.
\end{eqnarray}

Now, we consider the case $v\in \overline{W_1}$.
We can see that
\begin{eqnarray}\label{align30}
\sum_{v\in\overline{W_1}}\sum_{w\in N_1(v)\setminus\{u\}}x_w
&\le& \sum_{v\in\overline{W_1}}\sum_{w\in N_1(v)\cap(W_1\cup W_2)}x_w+
\sum_{v\in\overline{W_1}}\sum_{w\in N_1(v)\cap(\overline{W_1}\cup\overline{W_2})}x_w\nonumber\\
&\le& e(\overline{W_1},W_1\cup W_2)x_{u^*}+
\big(2e(\overline{W_1})+e(\overline{W_1},\overline{W_2})\big)\eta x_{u^*}\nonumber\\
&\le& e(\overline{W_1},W_1\cup W_2)x_{u^*}+
6\lambda\eta nx_{u^*},
\end{eqnarray}
where the last inequality follows from \eqref{align27}.
In the following, we shall evaluate $e(\overline{W_1},W_1\cup W_2)$.
Since $W_1\cup W_2\subseteq W$, it suffices to calculate $e(\overline{W_1},W)$.

Let $\overline{W_1}'$ be the subset of $\overline{W_1}$
in which each vertex has at least $\lambda$
neighbors in $W_1\cup W_2$.
If $|W_1\cup W_2|\le\lambda-1$, then $|\overline{W_1}'|=0$.
If $|W_1\cup W_2|\ge\lambda$, then we claim that
$|\overline{W_1}'|<(\lambda+1)\binom{|W_1\cup W_2|}{\lambda}$.
Otherwise, since there are only $\binom{|W_1\cup W_2|}{\lambda}$ options for all vertices in $\overline{W_1}'$ to choose a set of $\lambda$ neighbors from $W_1\cup W_2$,
we can find $\lambda$ vertices in $W_1\cup W_2$ with at least $|\overline{W_1}'|/\binom{|W_1\cup W_2|}{\lambda}\ge\lambda+1$ common neighbors in $\overline{W_1}'$. Moreover,
note that $u\notin W_1\cup W_2$
and $\overline{W_1}'\subseteq\overline{W_1}\subseteq N_1(u)$.
Hence, $G$ contains a copy of $K_{\lambda+1,\lambda+1}$, and thus $t$ disjoint
$2\ell$-cycles, a contradiction.
Therefore, we always have $|\overline{W_1}'|<(\lambda+1)\binom{|W_1\cup W_2|}{\lambda}\leq (\lambda+1)\binom{|W|}{\lambda}$.

By Lemma \ref{lem5.8}, $|W|$ is constant.
Now $|\overline{W_1}'|$ is also constant.
Thus, $|\overline{W_1}'||W_1\cup W_2|\leq9\lambda\eta n$.
Moreover, from the definition of $\overline{W_1}'$
we know $e(\overline{W_1}\setminus \overline{W_1}',W_1\cup W_2)\leq
(\lambda-1)|\overline{W_1}\setminus \overline{W_1}'|.$
Thus
\begin{eqnarray}\label{align31}
e(\overline{W_1},W_1\cup W_2)\leq
e(\overline{W_1}',W_1\cup W_2)+e(\overline{W_1}\setminus \overline{W_1}',W_1\cup W_2)
\le9\lambda\eta n+(\lambda-1)|N_1(u)|.
\end{eqnarray}
Back to (\ref{align30}), we obtain
$\sum_{v\in\overline{W_1}}\sum_{w\in N_1(v)\setminus\{u\}}x_w
\leq\big(15\lambda\eta n+(\lambda-1)|N_1(u)|\big)x_{u^*}.$
Combining this with (\ref{align28}) and (\ref{align29}),
we get that
$$\lambda n x_u
\leq|N_1(u)|x_{u}+20\lambda\eta nx_{u^*}+
(\lambda-1)|N_1(u)|x_{u^*}\leq\big(20\lambda\eta n+\lambda|N_1(u)|\big)x_{u^*},$$
which yields $|N_1(u)|\ge \big(\frac{x_u}{x_{u^*}}-20\eta\big)n$,
as desired.
\end{proof}

Now, we define $W''=\{u\in V(G)~|~x_u\ge 5000\lambda^4\eta x_{u^*}\}$.
Recall that $\eta<\frac{1}{20000\lambda^5}$ and
$W=\{u\in V(G)~|~x_u\ge \eta x_{u^*}\}$.
Clearly, $u^*\in W''$ and $W''\subseteq W.$

\begin{lem}\label{lem5.10}
For every $v\in W''$, we have $x_v\ge (1-\frac{1}{200\lambda^3})x_{u^*}$
and $d_G(v)\ge (1-\frac{1}{100\lambda^3})n$.
Moreover, we have $|W''|=\lambda$.
\end{lem}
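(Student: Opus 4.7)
My plan is to prove the three assertions of Lemma~\ref{lem5.10} in the order stated, using Lemma~\ref{lem5.9} as the central tool together with the cardinality bounds from Lemmas~\ref{lem5.7} and~\ref{lem5.8}. I handle the entry bound first, then derive the degree bound as an immediate corollary, and finally determine $|W''|$ by a two-sided argument.

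For the entry bound, I apply the eigenvalue equation at $u^*$ and at $v\in W''$ and subtract, obtaining
\[
(\rho(G)+1)(x_{u^*}-x_v)=\sum_{w\in\overline{N}(v)}x_w-\sum_{w\in\overline{N}(u^*)}x_w,
\]
where $\overline{N}(u)=V(G)\setminus(N_1(u)\cup\{u\})$. Dropping the nonnegative subtracted sum and partitioning $\overline{N}(v)$ into $W'$ and $V\setminus W'$, I bound the right-hand side using Lemma~\ref{lem5.7} (which controls $|W'|$) and the definition of $W'$ (which forces $x_w<\eta x_{u^*}/5$ for $w\notin W'$). The remaining cardinality $|\overline{N}(v)|=n-1-d_G(v)$ is controlled via Lemma~\ref{lem5.9} applied at $v$, yielding $n-d_G(v)\le(1-x_v/x_{u^*}+20\eta)n$. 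Combined with $\rho(G)\ge\sqrt{\lambda n}$ from Lemma~\ref{lem5.4}, this produces a self-referential inequality for the ratio $\alpha:=x_v/x_{u^*}$; invoking the quantitative choice $\eta<1/(20000\lambda^5)$ and the largeness of $n$, the inequality closes and yields $\alpha\ge 1-\frac{1}{200\lambda^3}$. The degree bound $d_G(v)\ge(1-\frac{1}{100\lambda^3})n$ then follows from a second application of Lemma~\ref{lem5.9}, since $20\eta<\frac{1}{1000\lambda^5}\ll\frac{1}{200\lambda^3}$.

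To determine $|W''|=\lambda$, I argue in both directions. Suppose $|W''|\ge\lambda+1=\ell t$. By the degree bound, any $\ell$ vertices of $W''$ share at least $(1-\frac{\ell}{100\lambda^3})n-\ell t$ common neighbors. Partitioning $W''$ into $t$ blocks of $\ell$ vertices each and greedily selecting $\ell$ fresh common neighbors for each block produces $t$ vertex-disjoint copies of $K_{\ell,\ell}$, each containing a Hamilton cycle $C_{2\ell}$; hence $G$ contains $tC_{2\ell}$, contradicting $G$ being $tC_{2\ell}$-free. Conversely, suppose $|W''|\le\lambda-1$. Using the second-order identity $\rho^2(G)x_{u^*}=\sum_z|N_1(u^*)\cap N_1(z)|x_z$, I split the sum by $z\in W''$ versus $z\notin W''$, control the latter via the definition of $W''$ together with the edge-count bound $\sum_z|N_1(u^*)\cap N_1(z)|\le 2e(G)$ from Lemma~\ref{lem5.5}, and combine with the entry bound from the first step to sharpen the $W''$ contribution. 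This forces $\rho^2(G)<\lambda n$, contradicting Lemma~\ref{lem5.4}.

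I expect the main obstacle to be the first step: a naive Rayleigh-quotient estimate produces an error of order $\eta n\cdot x_{u^*}/\rho(G)\sim\eta\sqrt{n/\lambda}$, which diverges with $n$ and is much too loose to reach the target $\frac{1}{200\lambda^3}x_{u^*}$. The resolution is to feed Lemma~\ref{lem5.9} back into the estimate, tying $|\overline{N}(v)|$ to the unknown $\alpha$ in exactly the correct direction so that the resulting coupled inequality tightens as $\alpha\to 1$; the quantitative balance between $\frac{1}{200\lambda^3}$ and $\frac{1}{20000\lambda^5}$ in the hypotheses is arranged precisely to make this bootstrap work.
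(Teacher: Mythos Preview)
Your proposed argument for the entry bound has a genuine scale mismatch that the bootstrap through Lemma~\ref{lem5.9} does not repair. From your identity
\[
(\rho(G)+1)(x_{u^*}-x_v)\le\sum_{w\in\overline{N}(v)}x_w
\le |W'|\,x_{u^*}+\frac{\eta}{5}\,|\overline{N}(v)\setminus W'|\,x_{u^*},
\]
the first term alone already contributes $|W'|\,x_{u^*}\le\frac{\eta}{20\lambda}n\,x_{u^*}$, and dividing by $\rho(G)+1\sim\sqrt{\lambda n}$ gives a bound on $1-\alpha$ of order $\eta\sqrt{n}/\lambda^{3/2}\to\infty$. Feeding Lemma~\ref{lem5.9} back in only shrinks the \emph{second} term (via $|\overline{N}(v)|\le(1-\alpha+20\eta)n$); it does nothing to the $|W'|$ term, and moreover the coupled inequality becomes $(\rho+1-\frac{\eta n}{5})(1-\alpha)\le\frac{\eta n}{20\lambda}+4\eta^2 n$, whose left side is \emph{negative} for large $n$, so the inequality is vacuous. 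You correctly diagnose the obstacle in your last paragraph, but the proposed resolution does not close it.

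The paper avoids this by working at second order: it re-runs the $\rho^2(G)x_{u^*}$ expansion from \eqref{align28}--\eqref{align31} at $u=u^*$, but now isolates the single summand $e(\overline{W_1},\{v_0\})\,x_{v_0}$ attached to the putative low-entry vertex $v_0$. If $x_{v_0}<(1-\frac{1}{200\lambda^3})x_{u^*}$ this creates a deficit of at least $\frac{1}{200\lambda^3}e(\overline{W_1},\{v_0\})x_{u^*}$ in a chain of inequalities whose two sides are both of order $\lambda n\,x_{u^*}$, forcing $e(\overline{W_1},\{v_0\})<4000\lambda^4\eta n$; this contradicts a lower bound $|\overline{W_1}\cap N_1(v_0)|>4000\lambda^4\eta n$ obtained directly from Lemma~\ref{lem5.9}. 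The point is that $\rho^2\sim\lambda n$ lives at the same scale as the error terms, whereas $\rho\sim\sqrt{\lambda n}$ does not.

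A second, smaller issue: in your argument for $|W''|\ge\lambda$, the bound $\sum_z|N_1(u^*)\cap N_1(z)|\le 2e(G)\le 2\ell n^{1+1/\ell}$ is $\omega(n)$ and, multiplied by $5000\lambda^4\eta$, still dominates $\rho^2\sim\lambda n$. One needs the sharper estimate $\sum_{w\in N_1(u^*)}d_G(w)=O(\lambda n)$ implicit in the proof of Lemma~\ref{lem5.5}; even then the constants are delicate, and the paper instead reuses the finer decomposition \eqref{align28}--\eqref{align31} to obtain the contradiction. Your upper bound $|W''|\le\lambda$ and the degree bound (given the entry bound) are correct.
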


\begin{proof}
Suppose to the contrary that there exists $v_0\in W''$ with $x_{v_0}<(1-\frac{1}{200\lambda^3})x_{u^*}$.
We use $W_i$ and $\overline{W_i}$ to denote $N_i(u^*)\cap W$ and $N_i(u^*)\setminus W$, respectively.
We first prove that
$|\overline{W_1}\cap N_1(v_0)|\ge 4000\lambda^4\eta n$.
By Lemma \ref{lem5.9}, we have
$$|N_1(u^*)|\ge(1-20\eta)n~~~\text{and}
~~~|N_1(v_0)|\ge(5000\lambda^4\eta-20\eta)n,$$
as $x_{v_0}\ge 5000\lambda^4\eta nx_{u^*}$.
Moreover, by Lemma \ref{lem5.8}, we have
$|W|\leq\frac{128\lambda^3}{\eta^2}\leq 10\eta n$.
Hence, $|\overline{W_1}|=|N_1(u^*)\setminus W|\ge(1-30\eta)n,$
and so
\begin{eqnarray}\label{align32}
\big|\overline{W_1}\cap N_1(v_0)\big|\ge\big|\overline{W_1}\big|
+\big|N_1(v_0)\big|-n\ge(5000\lambda^4\eta-50\eta)n>4000\lambda^4\eta n.
\end{eqnarray}

In view of (\ref{align32}), $v_0$ has neighbors in $\overline{W_1}$.
Then $v_0$ is of distance at most two from $u^*$,
that is, $v_0\in N_1(u^*)\cup N_2(u^*)$.
Note that $v_0\in W''\subseteq W$. Thus, $v_0\in W_1\cup W_2$.
Recall that
$x_{v_0}<(1-\frac{1}{200\lambda^3})x_{u^*}$.
Now, setting $u=u^*$ in \eqref{align28}-\eqref{align30},
we can observe that
\begin{eqnarray*}
\lambda nx_{u^*}
\!&\leq&\!|N_1(u^*)|x_{u^*}+11\lambda\eta nx_{u^*}
+e\big(\overline{W_1},(W_1\cup W_2)\setminus\{v_0\}\big)x_{u^*}
+e\big(\overline{W_1},\{v_0\}\big)x_{v_0}\\
\!&<&\!|N_1(u^*)|x_{u^*}+11\lambda\eta nx_{u^*}
+e\big(\overline{W_1},(W_1\cup W_2)\big)x_{u^*}
-e\big(\overline{W_1},\{v_0\}\big)\frac{x_{u^*}}{200\lambda^3}.
\end{eqnarray*}
From (\ref{align31}) we know that
$e(\overline{W_1},W_1\cup W_2)\leq
9\lambda\eta n+(\lambda-1)|N_1(u^*)|.$
Thus,
$$\lambda n
\leq \lambda|N_1(u^*)|+20\lambda\eta n
-\frac{e\big(\overline{W_1},\{v_0\}\big)}{200\lambda^3}
<\lambda n+20\lambda\eta n
-\frac{e\big(\overline{W_1},\{v_0\}\big)}{200\lambda^3}.$$
Consequently,
$e\big(\overline{W_1},\{v_0\}\big)<4000\lambda^4\eta n,$
contradicting (\ref{align32}).
Thus $x_v\ge (1-\frac{1}{200\lambda^3})x_{u^*}$
for $v\in W''$.

Recall that $\eta<\frac{1}{20000\lambda^5}$.
Then by Lemma \ref{lem5.9}, we can see that for each $v\in W''$,
$$d_G(v)\geq\big(\frac{x_v}{x_{u^*}}-20\eta\big)n\ge
\Big(1-\frac{1}{200\lambda^3}-20\eta\Big)n
\ge\Big(1-\frac{1}{100\lambda^3}\Big)n.$$

It remains to show $|W''|=\lambda$.
We first suppose that $|W''|\ge\lambda+1$.
Note that every $v\in W''$ has at most $\frac{n}{100\lambda^3}$ non-neighbors.
It follows that any $\lambda+1$ vertices in $W''$ have at least $n-\frac{(\lambda+1)n}{100\lambda^3}\geq \lambda+1$ common neighbors.
Thus, $G$ contains $K_{\lambda+1,\lambda+1}$ as a subgraph.
Recall that $\lambda=\ell t-1$. Thus $G$ also contains $tC_{2\ell}$, a contradiction.
Therefore, $|W''|\leq\lambda$.

Next, suppose that $|W''|\le \lambda-1$.
Since $u^*\in W''\setminus (W_1\cup W_2)$,
we have $|W''\cap(W_1\cup W_2)|\le\lambda-2$.
Moreover, recall that $W_i\cup\overline{W_i}=N_i(u^*)$ for $i\in\{1,2\}$,
then
\begin{eqnarray*}
e\big(\overline{W_1},(W_1\cup W_2)\setminus W''\big)\leq e\big(\overline{W_1},W_1\big)+e\big(\overline{W_1},W_2\big)
\leq e(N_1(u^*))+e(N_1(u^*),N_2(u^*)).
\end{eqnarray*}
Setting $u=u^*$ and $W_0=N_2(u^*)$ in \eqref{align15} and \eqref{align16},
we have $e(N_1(u^*))+e(N_1(u^*),N_2(u^*))\le(4\lambda-2)n,$
and thus $e(\overline{W_1},(W_1\cup W_2)\setminus W'')\leq(4\lambda-2)n.$
Furthermore, we shall note that $x_w<5000\lambda^4\eta x_{u^*}$
for each $w\in(W_1\cup W_2)\setminus W''.$
Now, setting $u=u^*$ in \eqref{align28}-\eqref{align30}
and dividing both sides of \eqref{align28} by $x_{u^*}$,
we can see that
\begin{eqnarray*}
\lambda n
\!&\leq&\!|N_1(u^*)|\!+\!11\lambda\eta n\!+\!
e\big(\overline{W_1},(W_1\cup W_2)\cap W''\big)
\!+\!e\big(\overline{W_1},(W_1\cup W_2)\setminus W''\big)5000\lambda^4\eta\\
\!&\leq&\! n+11\lambda\eta n+(\lambda-2)n+5000\lambda^4\eta(4\lambda-2)n\\
\!&<&\! \lambda n,
\end{eqnarray*}
as $\eta<\frac{1}{20000\lambda^5}$.
This gives a contradiction.
Therefore, $|W''|=\lambda$.
\end{proof}

In the following, we complete the proof of Theorem \ref{theorem1.5}.

\begin{proof}
By Lemma \ref{lem5.10}, we see that
$|W''|=\lambda=\ell t-1$ and every vertex in $W''$ has
at most $\frac{n}{100\lambda^3}$ non-neighbors.
Now, let $U$ be the subset of $V(G)\setminus W''$
in which every vertex is a non-neighbor of some vertex in $W''$
and $U'=V(G)\setminus (W''\cup U)$.
Then, $G[W'',U']\cong K_{|W''|,|U'|}$.
Note that $|U|\le|W''|\frac{n}{100\lambda^3}=\frac{n}{100\lambda^2}$,
and thus
$|U'|\geq n-\lambda-\frac{n}{100\lambda^2}\geq \frac n2.$

We will see that $U=\varnothing$.
Suppose to the contrary that $U\neq\varnothing$.
Given $u\in U$ arbitrarily. Then,
$u$ has at most one neighbor in $U'$
(otherwise, $u$ has two neighbors $w_1,w_2\in
U'$, then $G[W'',U']$ together with $uw_1,uw_2$
gives a copy of $tC_{2\ell}$, a contradiction).
Moreover, by the definition of $U$,
$|N_1(u)\cap W''|\le|W''|-1=\lambda-1$.
It follows that
\begin{eqnarray}\label{align33}
\sum_{w\in N_1(u)\cap(W''\cup U')}\!\!\!\!x_w
=\!\!\sum_{w\in N_1(u)\cap W''}\!\!x_w+\!\!
\sum_{w\in N_1(u)\cap U'}\!\!x_w
\leq(\lambda-1)x_{u^*}+5000\lambda^4\eta x_{u^*}.
\end{eqnarray}

We now claim that $\rho(G)x_u\ge(\lambda-\frac{1}{200\lambda^2})x_{u^*}$.
Otherwise, let $G^*$ be the graph obtained from $G$
by deleting all edges incident to $u$ and joining $u$ to all vertices in $W''$.
Note that $|U'|\geq \frac n2$ and $N_{G^*}(u)\subseteq N_{G^*}(v)$ for any $v\in U'$.
Then $G^*$ is $tC_{2\ell}$-free (otherwise, $G^*-\{u\}$ contains $tC_{2\ell}$,
and thus $G-\{u\}$ too, a contradiction).
Moreover,
$$\rho(G^*)-\rho(G)\geq X^T\big(A(G^*)-A(G)\big)X
=2x_u\Big(\sum_{w\in W''}x_w-\sum_{w\in N_1(u)}x_w\Big).$$
Note that $\sum_{w\in W''}x_w\geq|W''|(1-\frac{1}{200\lambda^3})x_{u^*}
=(\lambda-\frac{1}{200\lambda^2})x_{u^*}$ by Lemma \ref{lem5.10}, but
$\sum_{w\in N_1(u)}x_w=\rho(G)x_u<(\lambda-\frac{1}{200\lambda^2})x_{u^*}$
by assumption.
Thus, $\rho(G^*)>\rho(G)$, a contradiction.

Now we have
$$\Big(\lambda-\frac{1}{200\lambda^2}\Big)x_{u^*}\leq\rho(G)x_u
=\!\!\sum_{w\in N_1(u)\cap(W''\cup U')}\!\!x_w+\!\!\sum_{w\in N_1(u)\cap U}\!\!x_w.$$
Combining \eqref{align33} gives
$$\frac{\sum_{w\in N_1(u)\cap U}x_w}{\rho(G)x_u}
\ge\frac{(\lambda-\frac{1}{200\lambda^2})x_{u^*}-
(\lambda-1+5000\lambda^4\eta)x_{u^*}}{(\lambda-\frac{1}{200\lambda^2})x_{u^*}}
\ge\frac{4}{5\lambda},$$
as $\eta<\frac{1}{20000\lambda^5}
<\frac{1}{5000\lambda^4}(\frac15-\frac{1}{200\lambda^2}
+\frac{1}{250\lambda^3})$.
Thus, $\sum_{w\in N_1(u)\cap U}x_w\ge\frac{4}{5\lambda}\rho(G)x_u$.

Now consider the matrix $A'=A(G[U])$ and the vector
$X'=X|_U$ (the restriction of $X$ to $U$).
We can observe that
$$(A'X')_u=\sum_{w\in N_1(u)\cap U}x_w
\ge\frac{4}{5\lambda}\rho(G)x_u$$
for each $u\in U$.
Since $X$ is the Perron vector of $G$,
$X'$ is a positive vector and thus
$A'X'\ge\frac{4}{5\lambda}\rho(G)X'$ entrywise.
Moreover, $\rho(G)\geq\sqrt{\lambda n}$ by Lemma \ref{lem5.4}.
Hence,
$$\rho(G[U])\ge \frac{X'^TA'X'}{X'^TX'}\ge\frac{4}{5\lambda}\rho(G)\ge \frac{4}{5}\sqrt{\frac n\lambda},$$
which also implies that $|U|=\Omega(\sqrt{n}).$
Since $G[U]$ is $tC_{2\ell}$-free,
we have $\rho(G[U])\le\sqrt{6\lambda|U|}$ by Lemma \ref{lem5.4}.
Recall that $|U|\le\frac{n}{100\lambda^2}$.
It follows that
$$\rho(G[U])\le\sqrt{\frac{6\lambda n}{100\lambda^2}}
<\frac{4}{5}\sqrt{\frac n\lambda},$$
a contradiction. Therefore, $U=\varnothing$.

\begin{figure}[!ht]
\centering
\begin{tikzpicture}[x=1.00mm, y=0.70mm, inner xsep=0pt, inner ysep=0pt, outer xsep=0pt, outer ysep=0pt]
\path[line width=0mm] (42.93,46.62) rectangle +(150.52,45.29);
\definecolor{L}{rgb}{0,0,0}
\path[line width=0.30mm, draw=L] (79.07,83.13) ellipse (19.11mm and 6.57mm);
\definecolor{F}{rgb}{0,0,0}
\path[line width=0.30mm, draw=L, fill=F] (65.52,81.79) circle (1.00mm);
\path[line width=0.30mm, draw=L, fill=F] (71.74,81.95) circle (1.00mm);
\path[line width=0.30mm, draw=L, fill=F] (81.84,81.95) circle (1.00mm);
\path[line width=0.30mm, draw=L, fill=F] (89.25,81.95) circle (1.00mm);
\path[line width=0.30mm, draw=L] (80.41,55.69) ellipse (22.81mm and 7.07mm);
\path[line width=0.30mm, draw=L, fill=F] (62.32,57.04) circle (1.00mm);
\path[line width=0.30mm, draw=L, fill=F] (68.04,57.21) circle (1.00mm);
\path[line width=0.30mm, draw=L, fill=F] (73.26,57.38) circle (1.00mm);
\path[line width=0.30mm, draw=L, fill=F] (80.50,57.21) circle (1.00mm);
\path[line width=0.30mm, draw=L, fill=F] (90.77,57.21) circle (1.00mm);
\path[line width=0.30mm, draw=L, fill=F] (97.84,57.38) circle (1.00mm);
\path[line width=0.30mm, draw=L] (62.49,57.21) -- (68.04,57.21);
\path[line width=0.30mm, draw=L] (68.21,57.21) -- (73.26,57.38);
\path[line width=0.30mm, draw=L, fill=F] (74.77,81.62) circle (0.40mm);
\path[line width=0.30mm, draw=L, fill=F] (76.63,81.62) circle (0.40mm);
\path[line width=0.30mm, draw=L, fill=F] (78.65,81.62) circle (0.40mm);
\path[line width=0.30mm, draw=L, fill=F] (84.03,57.04) circle (0.40mm);
\path[line width=0.30mm, draw=L, fill=F] (86.22,57.04) circle (0.40mm);
\path[line width=0.30mm, draw=L, fill=F] (88.07,57.21) circle (0.40mm);
\path[line width=0.30mm, draw=L] (65.35,82.12) -- (62.15,57.38);
\path[line width=0.30mm, draw=L] (65.52,81.79) -- (98.17,57.21);
\path[line width=0.30mm, draw=L] (71.91,82.12) -- (73.26,57.38);
\path[line width=0.30mm, draw=L] (72.08,82.29) -- (80.50,57.21);
\path[line width=0.30mm, draw=L] (90.60,57.71) -- (81.68,82.29);
\path[line width=0.30mm, draw=L] (90.43,57.88) -- (89.25,82.46);
\path[line width=0.30mm, draw=L] (97.84,57.54) -- (89.25,82.29);
\draw(63.32,85) node[anchor=base west]{\fontsize{12.23}{15.07}\selectfont $w_{1}$};
\draw(69.70,85) node[anchor=base west]{\fontsize{12.23}{15.07}\selectfont $w_{2}$};
\draw(77.81,85) node[anchor=base west]{\fontsize{12.23}{15.07}\selectfont $w_{l-2}$};
\draw(86.39,85) node[anchor=base west]{\fontsize{12.23}{15.07}\selectfont $w_{l-1}$};
\draw(60.46,52) node[anchor=base west]{\fontsize{12.23}{15.07}\selectfont $u_{1}$};
\draw(66.01,52) node[anchor=base west]{\fontsize{14.23}{17.07}\selectfont $u_{2}$};
\draw(71.73,52) node[anchor=base west]{\fontsize{12.23}{15.07}\selectfont $u_{3}$};
\draw(78.13,52) node[anchor=base west]{\fontsize{12.23}{15.07}\selectfont $u_{4}$};
\draw(88.57,52) node[anchor=base west]{\fontsize{12.23}{15.07}\selectfont $u_{l}$};
\draw(93.31,52) node[anchor=base west]{\fontsize{12.23}{15.07}\selectfont $u_{l+1}$};
\path[line width=0.30mm, draw=L] (158.11,83.35) ellipse (19.11mm and 6.57mm);
\path[line width=0.30mm, draw=L, fill=F] (144.56,82.00) circle (1.00mm);
\path[line width=0.30mm, draw=L, fill=F] (150.79,82.17) circle (1.00mm);
\path[line width=0.30mm, draw=L, fill=F] (160.89,82.17) circle (1.00mm);
\path[line width=0.30mm, draw=L, fill=F] (168.30,82.17) circle (1.00mm);
\path[line width=0.30mm, draw=L] (159.46,55.74) ellipse (22.81mm and 7.07mm);
\path[line width=0.30mm, draw=L, fill=F] (141.37,57.26) circle (1.00mm);
\path[line width=0.30mm, draw=L, fill=F] (147.09,57.43) circle (1.00mm);
\path[line width=0.30mm, draw=L, fill=F] (153.70,57.51) circle (1.00mm);
\path[line width=0.30mm, draw=L, fill=F] (164.52,57.64) circle (1.00mm);
\path[line width=0.30mm, draw=L, fill=F] (169.81,57.43) circle (1.00mm);
\path[line width=0.30mm, draw=L, fill=F] (176.88,57.59) circle (1.00mm);
\path[line width=0.30mm, draw=L] (141.53,57.43) -- (147.09,57.43);
\path[line width=0.30mm, draw=L, fill=F] (153.82,81.83) circle (0.40mm);
\path[line width=0.30mm, draw=L, fill=F] (155.67,81.83) circle (0.40mm);
\path[line width=0.30mm, draw=L, fill=F] (157.69,81.83) circle (0.40mm);
\path[line width=0.30mm, draw=L, fill=F] (159.34,57.42) circle (0.40mm);
\path[line width=0.30mm, draw=L, fill=F] (161.28,57.47) circle (0.40mm);
\path[line width=0.30mm, draw=L, fill=F] (157.29,57.32) circle (0.40mm);
\path[line width=0.30mm, draw=L] (144.40,82.34) -- (141.20,57.59);
\path[line width=0.30mm, draw=L] (144.56,82.00) -- (177.22,57.43);
\path[line width=0.30mm, draw=L] (164.52,57.86) -- (168.30,82.68);
\draw(142.37,85) node[anchor=base west]{\fontsize{12.23}{15.07}\selectfont $w_{1}$};
\draw(148.75,85) node[anchor=base west]{\fontsize{12.23}{15.07}\selectfont $w_{2}$};
\draw(156.86,85) node[anchor=base west]{\fontsize{12.23}{15.07}\selectfont $w_{l-2}$};
\draw(165.44,85) node[anchor=base west]{\fontsize{12.23}{15.07}\selectfont $w_{l-1}$};
\draw(139.50,52) node[anchor=base west]{\fontsize{12.23}{15.07}\selectfont $u_{1}$};
\draw(146.06,52) node[anchor=base west]{\fontsize{12.23}{15.07}\selectfont $u_{2}$};
\draw(151.78,52) node[anchor=base west]{\fontsize{12.23}{15.07}\selectfont $u_{3}$};
\draw(161.90,52.46) node[anchor=base west]{\fontsize{12.23}{15.07}\selectfont $u_{l-1}$};
\draw(169.62,52) node[anchor=base west]{\fontsize{12.23}{15.07}\selectfont $u_{l}$};
\draw(173.94,52) node[anchor=base west]{\fontsize{12.23}{15.07}\selectfont $u_{l+1}$};
\draw(46.5,80.58) node[anchor=base west]{\fontsize{16.92}{20.90}\selectfont $W^{\prime\prime}$};
\draw(47.40,53.13) node[anchor=base west]{\fontsize{16.92}{20.90}\selectfont $U^{\prime}$};
\draw(127.76,80.58) node[anchor=base west]{\fontsize{16.92}{20.90}\selectfont $W^{\prime\prime}$};
\draw(126.40,53.13) node[anchor=base west]{\fontsize{16.92}{20.90}\selectfont $U^{\prime}$};
\path[line width=0.30mm, draw=L] (150.86,82.26) -- (147.00,57.64);
\path[line width=0.30mm, draw=L] (170.07,57.77) -- (176.77,57.77);
\path[line width=0.30mm, draw=L] (150.86,82.13) -- (153.70,57.13);
\path[line width=0.30mm, draw=L] (160.85,82.45) -- (164.52,57.64);
\path[line width=0.30mm, draw=L] (168.62,82.02) -- (169.81,57.42);
\end{tikzpicture}%
\caption{A special $2\ell$-cycle in $G$. }{\label{fig1}}
\end{figure}
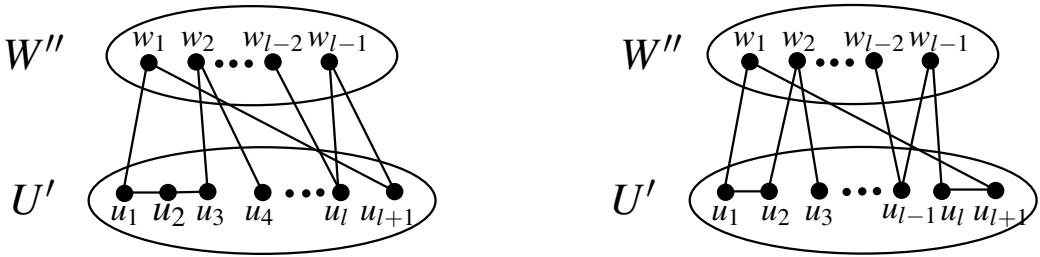

Now we have $V(G)=W''\cup U'$ and $G[W'',U']\cong K_{\lambda,n-\lambda}$.
In the following, we consider two cases of Theorem \ref{theorem1.5}.

(i) $\ell=2$.
Since $|W''|=\lambda=\ell t-1=2t-1$, we can see that
$G[U']$ is $P_3$-free (otherwise, we can find $tC_{4}$ in $G$).
Thus, $G[U']$ consists of independent edges and isolated vertices.
Since $G$ is extremal with respect to $spex(n,tC_{4})$, we know that $G$ is edge-maximal,
which implies that $W''$ is a $(2t-1)$-clique and $G\cong S_{n,2t-1}^{++}$.

(ii) $\ell\geq3$.
Since $|W''|=\lambda=\ell t-1$, we will see that $e(U')\le 1$.
Otherwise, whether $G[U']$ contains a $P_3$ or two independent edges,
we can always find $t$ disjoint copies of $C_{2\ell}$,
which consist of $t-1$ $2\ell$-cycles in $G[W'',U']$,
and a special $2\ell$-cycle (see Figure \ref{fig1}).
Since $G$ is edge-maximal, we similarly have $G\cong S_{n,2t-1}^{+}$.

This completes the proof.
\end{proof}

\end{document}